\documentclass[preprint,11pt]{elsarticle}
\usepackage[T1]{fontenc}
\usepackage{lmodern,amsmath,amssymb,amsthm,mathtools}
\usepackage{booktabs,longtable,array,geometry,microtype,float}
\usepackage{xcolor,listings,needspace,textcomp}
\usepackage{hyperref}
\hypersetup{
	colorlinks=false,
	pdfborder={0 0 0.2}
}
\numberwithin{equation}{section}
\newtheorem{theorem}{Theorem}[section]
\newtheorem{proposition}[theorem]{Proposition}
\newtheorem{lemma}[theorem]{Lemma}

\theoremstyle{definition}

\theoremstyle{remark}

\theoremstyle{plain}
\newtheorem{conjecture}[theorem]{Conjecture}
\allowdisplaybreaks[1]

\newcommand{\E}{\mathbb E}
\DeclareMathOperator{\Var}{Var}

\lstdefinelanguage{Maple}{
	morekeywords={proc,local,option,remember,for,from,to,do,end,if,then,
		else,return,error,and,or,not,in,restart},
	sensitive=true,morecomment=[l]{\#},morestring=[b]"}
\begin{document}
	\begin{frontmatter}
		
		\title{Proof of Almkvist's conjecture on the  unimodality of partition polynomials}
		
		\author[inst1]{Jianxi Mao}
		\ead{maojx@dlut.edu.cn}
		
		\author[inst1]{Wenle Shi}
		\ead{shi-wenle@hotmail.com}
		
		\author[inst2]{Bao-Xuan Zhu\corref{cor1}}
		\ead{bxzhu@jsnu.edu.cn}
		\cortext[cor1]{Corresponding author.}
		
		\address[inst1]{School of Mathematical Sciences, Dalian University of Technology, Dalian 116024, P. R. China}
		\address[inst2]{School of Mathematics and Statistics, Jiangsu Normal University, Xuzhou 221116, P. R. China}
		
		\begin{abstract}
For integers $r\ge2$ and $n\ge1$, let
\[
F_{r,n}(q)=\prod_{k=1}^{n}\frac{1-q^{rk}}{1-q^k}.
\]
The coefficient of $q^j$ in \(F_{r,n}(q)\) counts partitions of $j$ into parts at
most $n$, each occurring at most $r-1$ times.
Hughes proved that $F_{2,n}(q)=\prod_{k=1}^{n}(1+q^k)$ is unimodal for every $n\ge 1$.
This result was reproved by Stanley using an algebraic approach and Odlyzko and Richmond using an analytic approach.
Almkvist conjectured that  $F_{r,n}(q)$ is unimodal in the following two cases:
every even $r$ and every $n\ge 1$; every odd $r$ and every $n\ge 11$.
He proved that this conjecture is true for $3\le r \le 20$ and $r=100,101$.
In this paper, we completely settle the conjecture.
\end{abstract}

		\begin{keyword}
			unimodality \sep partition polynomials \sep Fourier analysis
			\MSC[2020] 05A20 \sep 05A17 \sep 11P82
		\end{keyword}
		
	\end{frontmatter}

	\section{Introduction}
A finite real sequence $a_0,a_1,\ldots,a_m$ is \emph{unimodal} if there is an index $j$ such that
\[
a_0\le a_1\le\cdots\le a_{j-1}\le a_j\ge a_{j+1}\ge\cdots\ge a_m.
\]
A polynomial is unimodal when its coefficient sequence has this property. 
The study of unimodality of polynomials  has drawn great attention in recent  decades~\cite{Brenti89,Stanley89}.
Many polynomials with finite $q$-product representations are known to be unimodal.
For example, the Gaussian polynomial 
$$
\genfrac{[}{]}{0pt}{}{n}{k}=
\prod_{j=1}^k\frac{1-q^{n-k+j}}{1-q^j}
$$
is unimodal~\cite{Syl78,OHa90}.
Coefficient positivity and unimodality of finite $q$-products have been extensively studied;
see, for instance, ~\cite{BW05,DJ23,DJ26,Ent68,GZ25,OR82,PP13,PP17,Wang22,ZZ24}.

In this paper, we study the unimodality of 
\begin{equation*}
	F_{r,n}(q)=\prod_{k=1}^{n}\frac{1-q^{rk}}{1-q^k}
		=\prod_{k=1}^{n}(1+q^k+\cdots+q^{(r-1)k}),
		\qquad r\ge2,\quad n\ge1.
\end{equation*}
The coefficient of $q^j$ in $F_{r,n}(q)$ counts partitions of $j$ into
parts at most $n$, each occurring at most $r-1$ times
\cite{Alm89}.

Using Lie algebraic methods,
Hughes~\cite{Hug77} first explicitly proved that $F_{2,n}(q)=\prod_{k=1}^{n}(1+q^k)$ is unimodal. 
Stanley~\cite{Stanley80} subsequently
gave a proof using the hard Lefschetz theorem.
An analytic proof was given by Odlyzko and
Richmond~\cite{OR82}.

Almkvist proposed the following conjecture.

	\begin{conjecture}[{\cite{Alm85}}]\label{con:main}
		Let $r\ge2$ and $n\ge1$ be integers. Then the polynomial $F_{r,n}(q)$ is
		unimodal in each of the following cases.
		\begin{enumerate}
			\item[\rm{(i)}] $r$ is even and $n\ge1$.
			\item[\rm{(ii)}] $r$ is odd and $n\ge11$.
		\end{enumerate}
	\end{conjecture}

Subsequently, Almkvist~\cite{Alm87,Alm89} proved the conjecture for 
$r\in\{3,4,\ldots,20,100,101\}$ and $n\ge1$.
Recently, Abdallah and McDaniel~\cite{AM25} proved that for $n=2$, \(F_{r,2}(q)\) is
unimodal if and only if \(r\ge 2\) is even.
They also showed that the polynomials $F_{r,n}(q)$ arise naturally in commutative
algebra as the Hilbert series of a family of nonstandard graded Artinian complete intersections.
The following theorem summarizes these previously known cases for Conjecture~\ref{con:main}.

\begin{theorem}[\cite{AM25,Alm89,Hug77}]\label{thm:finite}
Conjecture~\ref{con:main} is true when 
\begin{itemize}
    \item[\rm(i)] $r\in\{2,3,4,\ldots,20,100,101\}$.
    \item[\rm(ii)] $n=2$ and $r$ is even.
\end{itemize}
\end{theorem}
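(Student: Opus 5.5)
This theorem collects results already in the literature, so my plan is to reduce each case to the cited source and check that the hypotheses match Conjecture~\ref{con:main}. I do not intend to reprove the cited results.

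\textbf{Case (i), $r=2$.} Here $r$ is even, so Conjecture~\ref{con:main} asks for unimodality of $F_{2,n}(q)=\prod_{k=1}^n(1+q^k)$ for every $n\ge1$. This is exactly Hughes' theorem~\cite{Hug77}; alternatively one may cite Stanley's hard Lefschetz proof~\cite{Stanley80} or the analytic proof of Odlyzko and Richmond~\cite{OR82}.

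\textbf{Case (i), $r\in\{3,\dots,20,100,101\}$.} I invoke Almkvist~\cite{Alm87,Alm89}. For the even values of $r$ in this list his result gives unimodality for all $n\ge1$, which is what part (i) of the conjecture requires. For the odd values it gives unimodality for all $n\ge11$, which is what part (ii) requires. The step needing most care is matching the exact ranges of $n$ in Almkvist's statements to the conjecture. If his results for odd $r$ are stated only for $n$ beyond some threshold larger than $11$, the finitely many pairs $(r,n)$ with $r$ odd, $r\le 101$, and $n$ between $11$ and that threshold must be checked directly. This is a finite computation: expand $F_{r,n}(q)$, which is palindromic of degree $(r-1)n(n+1)/2$, and verify that its coefficients are nondecreasing up to the middle degree. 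Palindromicity means checking the first half suffices.

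\textbf{Case (ii), $n=2$ and $r$ even.} Abdallah and McDaniel~\cite{AM25} proved that $F_{r,2}(q)=(1+q+\cdots+q^{r-1})(1+q^2+\cdots+q^{2(r-1)})$ is unimodal if and only if $r$ is even. Only the "if" direction is needed here. It can also be checked by hand:
\begin{itemize}
\item The coefficient of $q^j$ counts pairs $(a,b)$ with $a+2b=j$ and $0\le a,b\le r-1$.
\item For $0\le j\le 2(r-1)$ this count equals $\min\bigl(\lfloor j/2\rfloor,r-1\bigr)-\max\bigl(0,\lceil (j-r+1)/2\rceil\bigr)+1$.
\item A parity analysis of this formula shows the sequence is nondecreasing up to the middle degree $3(r-1)/2$ exactly when $r$ is even.
\item Palindromicity then gives the decreasing second half.
\end{itemize}
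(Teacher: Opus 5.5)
Your proposal is correct and takes essentially the same approach as the paper, which gives no independent proof of this theorem and simply attributes $r=2$ to Hughes, $r\in\{3,\dots,20,100,101\}$ to Almkvist, and the case $n=2$ with $r$ even to Abdallah and McDaniel. Your optional hand check of (ii) is also sound: for even $r$ the coefficient of $q^j$ is $\lfloor j/2\rfloor+1$ for $0\le j<r-1$ and equals the constant $r/2$ for $r-1\le j\le 2(r-1)$, which together with palindromicity gives unimodality.
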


In this paper,  we completely prove the conjecture.
	\begin{theorem}\label{thm:main}
Let $r\ge2$ and $n\ge1$ be integers. Then
$
F_{r,n}(q)
$
is unimodal whenever $r$ is even or $n\ge11$.
\end{theorem}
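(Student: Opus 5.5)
Since $F_{r,n}(q)$ has nonnegative coefficients and is palindromic of degree $N=(r-1)n(n+1)/2$, unimodality is equivalent to $a_j\le a_{j+1}$ for $0\le j<N/2$, where $a_j=[q^j]F_{r,n}(q)$. I would argue analytically, in the style of Odlyzko--Richmond and as suggested by the keyword ``Fourier analysis''. Write
\[
a_{j+1}-a_j=\frac{1}{2\pi}\int_{-\pi}^{\pi}F_{r,n}(e^{i\theta})\,(e^{-i\theta}-1)\,e^{-ij\theta}\,d\theta .
\]
Each factor $(1-q^{rk})/(1-q^k)$ is a symmetric polynomial, so $e^{-i(r-1)k\theta/2}\frac{1-e^{irk\theta}}{1-e^{ik\theta}}=\frac{\sin(rk\theta/2)}{\sin(k\theta/2)}$. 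Setting $m=N/2-j>0$ and $G(\theta)=\prod_{k=1}^n\frac{\sin(rk\theta/2)}{\sin(k\theta/2)}$, which is real and even, the difference becomes $\frac1\pi\int_0^\pi G(\theta)\,2\sin(\theta/2)\sin((m-\tfrac12)\theta)\,d\theta$ up to sign and normalisation. The task is to show that this is positive.

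\textbf{Main arc.} Take $|\theta|\le \theta_0\approx c\,n^{-3/2}r^{-1}$, up to logarithmic factors. Expand $\log\frac{\sin(rx)}{r\sin x}=-\frac{(r^2-1)x^2}{6}-\cdots$, which gives $G(\theta)=r^n\exp(-\sigma^2\theta^2/2+O(\cdot))$ with $\sigma^2=\frac{r^2-1}{12}\sum k^2$. This produces a Gaussian term whose contribution to $a_{j+1}-a_j$ equals $r^n\,\frac{m}{\sigma^3\sqrt{2\pi}}e^{-m^2/2\sigma^2}$ times $(1+\text{error})$. That is positive, but it becomes small when $m$ is tiny or very large. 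For $m$ near the edges ($j$ small, $j\le$ some multiple of $n$) I would use a separate combinatorial injection. A partition of $j$ with parts $\le n$ and multiplicities $<r$ maps to one of $j+1$ by increasing the largest part, or by adding a part $1$. This works whenever $j$ is small relative to $rn$ or $n^2$. It reduces the analytic regime to $m\ge m_0$ with $m_0$ comparable to $\sigma/\text{polylog}$, where the Gaussian term still dominates.

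\textbf{Minor arcs.} This is the main obstacle. One must bound $\int_{\theta_0}^{\pi}|G(\theta)|\,\theta\,d\theta$ by something much smaller than $r^n m\sigma^{-3}e^{-m^2/2\sigma^2}$, uniformly in $r$. The trouble is the points $\theta=2\pi a/b$ with small $b$. There many factors have $\sin(k\theta/2)=0$, the ratio equals $r$ for those $k$, and $|G|$ can be as large as roughly $r^{n/b}\prod(\cdots)$. The key tool is the pointwise bound
\[
\Bigl|\frac{\sin(rx)}{r\sin x}\Bigr|\le \min\Bigl(1,\frac{1}{r|\sin x|}\Bigr).
\]
Near a rational $2\pi a/b$ with $b\ge2$, roughly a fraction $1-1/b$ of the $k$'s have $\|k\theta/2\pi\|\ge 1/b$, so each of those factors is at most $1/(r\sin(\pi/b))$. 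This gives $|G|\le r^n\cdot r^{-(1-1/b)n}\cdot C^n$, which is exponentially smaller than $r^n\sigma^{-3}$ once $n$ is large, even with $r$ unbounded. I would split $[\theta_0,\pi]$ by Dirichlet approximation into neighbourhoods of rationals with $b\le n$. On each neighbourhood I would repeat the local Gaussian analysis with the reduced product, and away from small-denominator rationals I would use a crude bound. The delicate case is $b=2$ and small $n$, where only half the factors decay. This is exactly where odd $r$ fails for $n\le10$: at $\theta=\pi$ the sign $(-1)^{\ldots}$ differs for odd $r$. For even $r$, $\sin(rk\pi/2)=0$ for even $k$, which kills that peak.

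\textbf{Finite cases.} The estimates above will give explicit thresholds: unimodality for all $n\ge n_1$ and $r\ge r_1$. For $r\le 20$ I invoke Theorem~\ref{thm:finite}(i), and the case $n=2$ with even $r$ is Theorem~\ref{thm:finite}(ii). The case $n=1$ is trivial. For the remaining finitely many pairs $(r,n)$ with $n<n_1$ I would argue by a direct or computer check; for fixed small $n$ and all large $r$, this uses the structure of $F_{r,n}$ as a piecewise-polynomial (quasi-polynomial) in $j/r$.
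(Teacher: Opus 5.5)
Your plan has a genuine gap: nothing in it controls the intermediate range of $j$, and the injection that is meant to cover the edge is flawed.

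\textbf{Why the analytic argument cannot reach the intermediate range.} Without an inductive reduction you must prove $a_{j+1}\ge a_j$ for every $j<N/2$, i.e.\ for $t=m-\tfrac12$ up to about $rn^2/4$, while $\sigma\asymp rn^{3/2}$. Your argument can only certify positivity when $t=O(\sigma)$, up to logarithms. Once $t/\sigma$ grows faster than $\sqrt{\log n}$, the Gaussian main term $r^n t\sigma^{-3}e^{-t^2/2\sigma^2}$ is smaller than any power of $n$; when $t\asymp rn^2$ it is of size $e^{-cn}$. The errors you propose to bound in absolute value are only polynomially small relative to $r^n$. The $O(\cdot)$ correction in your main-arc expansion of $\log G$ contributes $r^n\sigma^{-3}$ times a negative power of $n$. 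Just outside the main arc, $|G|/r^n\approx e^{-\sigma^2\theta_0^2/2}$, which for your choice of $\theta_0$ is far larger than $e^{-cn}$. So no bound on $\int_{\theta_0}^{\pi}|G|\,\theta\,d\theta$ can certify positivity there. You would need a saddle-point (shifted-contour) analysis, which the proposal does not contain.

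\textbf{Why the injection does not help.} It removes small $j$, i.e.\ large $m$, so the sentence ``reduces the analytic regime to $m\ge m_0$'' has the direction reversed. As stated it is also not injective: $(n)$ (add a part $1$) and $(n-1,1)$ (raise the largest part) both map to $(n,1)$. Moreover, adding a $1$ can violate the multiplicity bound $r-1$.

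\textbf{The missing idea: induction on $n$.} This is what the paper's proof is built on. From $F_{r,n}=F_{r,n-1}(1+q^n+\cdots+q^{(r-1)n})$ one gets $B_{r,n}(j)=\sum_{\ell=0}^{r-1}B_{r,n-1}(j-\ell n)$, where $B_{r,n}(j)=A_{r,n}(j)-A_{r,n}(j-1)$. So unimodality of $F_{r,n-1}$ gives $B_{r,n}(j)\ge0$ for all $j\le\mu_{r,n-1}$ at once. Only the window $0<t=\mu_{r,n}-j+\tfrac12\le rn/2$ remains, where $t/\sigma=O(n^{-1/2})$; the paper uses $t^2/\sigma^2<2/3$ for $n\ge12$.

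In that window a Gaussian lower bound on $[0,1/\sigma]$ suffices. The estimate $|\sin(t\theta)\sin(\theta/2)|\le t\theta^2/2$ on the tail makes main term and error both linear in $t$. This gives $P_{r,n}(t)>\tfrac{315}{16}\,t/(r^3n^{9/2})$ against a tail below $\tfrac{39}{2}\,t/(r^3n^{9/2})$, with no edge cases left. Your minor-arc idea survives in this setting. The paper uses the envelope $|\psi_r(x)|\le r^{-\sin^2(x/2)}$, a Voronoi decomposition of $[\pi/12,\pi]$ around the rationals $a/d$ with $d\le12$ for $n=12$, and the pair contraction $|\psi_r(k\theta)\psi_r((k+1)\theta)|<20/43$ on $[\pi/10,\pi]$ to pass from $n$ to $n+2$.

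\textbf{Two further requirements.} First, the induction needs seeds: $F_{r,11}$ for every odd $r\ge21$, and every $n\le11$ for even $r\ge22$. The paper proves these with $r$-uniform quasi-polynomial certificates, plus a coefficient-positivity argument for $j<9r$ when $n=11$; this is the part you only gesture at. Second, all analytic thresholds must hold for every $r\ge21$, since Theorem~\ref{thm:finite} covers only $r\le20$ (and $r=100,101$). A threshold $r_1>21$ would leave the infinite families $21\le r<r_1$, $n\ge1$ open.

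Minor point: for even $r$, $G(\pi)=0$ because of the odd $k$. For even $k$, the factor $\sin(rk\theta/2)/\sin(k\theta/2)$ has modulus $r$ at $\theta=\pi$.
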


By Theorem~\ref{thm:finite}, to prove Conjecture~\ref{con:main},
it suffices to prove the cases $r\ge 21$.
The paper is organized as follows.
In Section~2, we establish preliminary results on the symmetry
and first differences of the coefficients, and derive an induction criterion for unimodality.
Section~3 proves Conjecture~\ref{con:main} for $n\le 11.$
We reduce infinitely many
coefficient inequalities to finitely many rational inequalities,
with exact representations of restricted
partition functions in~\cite{Gaj22,RodsethSellers2006}.
Section~4 proves Conjecture~\ref{con:main} for $n\ge 12.$
We use Fourier inversion to estimate
the first differences of the coefficient sequence in the required range directly.
Local estimates near roots of unity, together with a contraction
bound for pairs of consecutive factors, show that the main term
exceeds the absolute value of the tail.
These bounds hold uniformly for $r\ge21$ and, together with the
initial cases, complete the induction on $n$ starting at $n=11$.
Some computational verifications are included in the
appendices.

	\section{Preliminaries}\label{sec:differences}
	For a fixed integer $r\ge 2$ and every integer $n\ge 1$, write
	\[
	F_{r,n}(q)=\prod_{k=1}^{n}(1+q^k+\cdots+q^{(r-1)k})
	=\sum_{j=0}^{D_{r,n}}A_{r,n}(j)q^j,\]
	where 
	$D_{r,n}=(r-1){n(n+1)}/2$.
    The empty product is $F_{r,0}=1.$ We set $D_{r,0}=0$
	Clearly,
	$F_{1,n}(q)=1$ 
	and $F_{r,n}(1)=r^n$. Set $A_{r,n}(j)=0$ for
	$j<0$ or $j>D_{r,n}$, and define
	\[
	B_{r,n}(j)=A_{r,n}(j)-A_{r,n}(j-1).
	\]
	Then $B_{r,n}(j)=[q^j](1-q) F_{r,n}(q).$
	It is immediate that $B_{r,n}(m)=0$ for $m<0$ and $B_{r,n}(0)=1.$
	The following is the symmetry of the coefficients of $F_{r,n}(q)$.
	See, for instance,~\cite{AM25,DJ23}.
	\begin{proposition}\label{fd:window}
		Let $r\ge2$ and $n\ge1$ be integers.
		For every integer $j$, we have
		\[
		A_{r,n}(j)=A_{r,n}(D_{r,n}-j).
		\]
	Let $$\mu_{r,n}:=\frac{D_{r,n}}{2}=\frac{(r-1)n(n+1)}{4}.$$
	Then  $F_{r,n}(q)$ is unimodal if and only if
	\begin{equation*}
		B_{r,n}(j)\ge0,
		\qquad 1\le j\le\left\lfloor\mu_{r,n}\right\rfloor.
	\end{equation*}
		\end{proposition}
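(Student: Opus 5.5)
The plan has two parts. First we prove the symmetry, as a palindromicity statement for $F_{r,n}$. Then we deduce the unimodality criterion from symmetry together with the sign pattern of the first differences $B_{r,n}$.

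For the symmetry, each factor $P_k(q)=1+q^k+\cdots+q^{(r-1)k}$ is palindromic of degree $(r-1)k$, that is,
\[
q^{(r-1)k}P_k(q^{-1})=P_k(q).
\]
Multiplying these identities over $k=1,\dots,n$ gives
\[
q^{D_{r,n}}F_{r,n}(q^{-1})=F_{r,n}(q),
\]
since $\sum_{k=1}^n (r-1)k=D_{r,n}$. Comparing the coefficients of $q^j$ on both sides yields $A_{r,n}(j)=A_{r,n}(D_{r,n}-j)$ for $0\le j\le D_{r,n}$. For $j$ outside this range, both sides vanish by the convention $A_{r,n}(j)=0$, so the identity holds for every integer $j$.

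For the criterion, we first take the forward direction. Suppose $F_{r,n}$ is unimodal with a mode at index $m$. Symmetry lets us take the mode at the center. Indeed, if $m<\mu_{r,n}$, then $D_{r,n}-m$ is also a maximizer, and unimodality forces every coefficient between $m$ and $D_{r,n}-m$ to be at least the minimum of the two endpoint values. Since both endpoints attain the maximum, every index in $[m,D_{r,n}-m]$, and in particular $\lfloor\mu_{r,n}\rfloor$, is a mode. The coefficients are therefore weakly increasing up to $\lfloor\mu_{r,n}\rfloor$, which gives $B_{r,n}(j)\ge0$ for $1\le j\le\lfloor\mu_{r,n}\rfloor$. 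The case $m>\mu_{r,n}$ reduces to this one by reflecting through the symmetry.

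For the converse, assume $B_{r,n}(j)\ge0$ for $1\le j\le\lfloor\mu_{r,n}\rfloor$. Then $A_{r,n}(0)\le A_{r,n}(1)\le\cdots\le A_{r,n}(\lfloor\mu_{r,n}\rfloor)$. For $j\ge\lceil\mu_{r,n}\rceil$, symmetry gives
\[
A_{r,n}(j)-A_{r,n}(j+1)=A_{r,n}(D_{r,n}-j)-A_{r,n}(D_{r,n}-j-1)=B_{r,n}(D_{r,n}-j),
\]
where $D_{r,n}-j\le\lfloor\mu_{r,n}\rfloor$. This difference is nonnegative whenever $D_{r,n}-j\ge1$, and the boundary case where it is $0$ or negative is trivial because the coefficients there vanish. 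Hence the sequence is weakly decreasing from $\lceil\mu_{r,n}\rceil$ onward.

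The only delicate step is the middle when $D_{r,n}$ is odd, where $\lceil\mu\rceil=\lfloor\mu\rfloor+1$. In that case the indices $\lfloor\mu\rfloor$ and $D-\lfloor\mu\rfloor=\lceil\mu\rceil$ are symmetric, so their coefficients are equal and no violation can occur. Taking the mode $j=\lfloor\mu_{r,n}\rfloor$ then completes the proof. The main obstacle is purely bookkeeping: handling the parity of $D_{r,n}$ and the endpoint indices carefully.
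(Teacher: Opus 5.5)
Your proof is correct. The palindromic identity $q^{(r-1)k}P_k(q^{-1})=P_k(q)$, multiplied over $k$, gives the symmetry. Both directions of the criterion are handled properly: the no-interior-dip property of unimodal sequences, the reflection $A_{r,n}(j)-A_{r,n}(j+1)=B_{r,n}(D_{r,n}-j)$, and the odd-$D_{r,n}$ middle step.

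The paper gives no proof of this proposition and only cites the literature for the symmetry. Your write-up is the standard argument that citation implicitly relies on.
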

We next record the antisymmetry and recurrence relations for
$B_{r,n}(j)$. 
	\begin{proposition}\label{fd:induction}
		Let $r\ge2$ and $n\ge1$ be integers.
		For every integer $j$,  we have
		\begin{align}\label{fd:recurrence}
			B_{r,n}(j)=-B_{r,n}(D_{r,n}+1-j)\qquad \textrm{and}\qquad
			B_{r,n}(j)=\sum_{\ell=0}^{r-1}B_{r,n-1}(j-\ell n).
		\end{align}
		Moreover, if $F_{r,n-1}(q)$ is unimodal, then
		\[
		B_{r,n}(j)\ge 0,\qquad 1\le j\le\left\lfloor\mu_{r,n-1}\right\rfloor.
		\]
	\end{proposition}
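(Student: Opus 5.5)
The three assertions will be handled in order, using only the generating function identity $\sum_j B_{r,n}(j)q^j=(1-q)F_{r,n}(q)$ and Proposition~\ref{fd:window}.

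\emph{Antisymmetry.} By the symmetry $A_{r,n}(j)=A_{r,n}(D_{r,n}-j)$ of Proposition~\ref{fd:window}, valid for all integers $j$ once both sides are set to zero outside $[0,D_{r,n}]$, we get
\[
B_{r,n}(D_{r,n}+1-j)=A_{r,n}(D_{r,n}+1-j)-A_{r,n}(D_{r,n}-j)=A_{r,n}(j-1)-A_{r,n}(j)=-B_{r,n}(j).
\]

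\emph{Recurrence.} The factorisation
\[
(1-q)F_{r,n}(q)=\bigl(1+q^n+\cdots+q^{(r-1)n}\bigr)\,(1-q)F_{r,n-1}(q)
\]
shows that the coefficient of $q^j$ on the left equals $\sum_{\ell=0}^{r-1}B_{r,n-1}(j-\ell n)$. For $n=1$ this still holds, with $F_{r,0}=1$ and $B_{r,0}(0)=1$, $B_{r,0}(1)=-1$.

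\emph{Positivity.} Assume $F_{r,n-1}$ is unimodal and let $1\le j\le\lfloor\mu_{r,n-1}\rfloor$. By Proposition~\ref{fd:window} applied to $n-1$, we have $B_{r,n-1}(m)\ge0$ for $1\le m\le\lfloor\mu_{r,n-1}\rfloor$. Also $B_{r,n-1}(0)=1$ and $B_{r,n-1}(m)=0$ for $m<0$. In the recurrence, every index $j-\ell n$ is at most $j\le\lfloor\mu_{r,n-1}\rfloor$. Each index is therefore either negative, giving a zero term, or lies in $[0,\lfloor\mu_{r,n-1}\rfloor]$, giving a nonnegative term. Hence $B_{r,n}(j)\ge0$.

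The only point needing care is the case $n=1$: there $\mu_{r,0}=0$, so the positivity range is empty and the claim is vacuous. I expect no genuine obstacle in any of the three steps.
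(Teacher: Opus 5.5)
Your proposal is correct and follows the paper's proof: antisymmetry from the coefficient symmetry of Proposition~\ref{fd:window}, the recurrence from splitting off the last factor $1+q^n+\cdots+q^{(r-1)n}$, and positivity because every index $j-\ell n$ is at most $\lfloor\mu_{r,n-1}\rfloor$. The only differences are cosmetic. You read the recurrence off $(1-q)F_{r,n}$ directly, whereas the paper first derives the recurrence for $A_{r,n}$ and then takes differences. You also spell out points the paper leaves implicit: $B_{r,n-1}(0)=1$, the vanishing of terms with negative index, and the case $n=1$.
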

	
	\begin{proof}
		By Proposition~\ref{fd:window},
		\begin{align*}
			B_{r,n}(D_{r,n}+1-j)
			&=
			A_{r,n}(D_{r,n}+1-j)-A_{r,n}(D_{r,n}-j)\\
			&=
			A_{r,n}(j-1)-A_{r,n}(j)=-B_{r,n}(j).
		\end{align*}
		Separating the last factor of $F_{r,n}(q)$ gives
		\[
		F_{r,n}(q)
		=
		F_{r,n-1}(q)\,\sum_{\ell=0}^{r-1}q^{\ell n}.
		\]
		Comparing the coefficients of $q^j$ on both sides, we obtain
		\begin{equation*}
			A_{r,n}(j)
			=
			\sum_{\ell=0}^{r-1}A_{r,n-1}(j-\ell n).
		\end{equation*}
		Hence, $B_{r,n}(j)=A_{r,n}(j)-A_{r,n}(j-1)$ equals
		\[
		\begin{aligned}
			\sum_{\ell=0}^{r-1}
			\left(
			A_{r,n-1}(j-\ell n)-A_{r,n-1}(j-\ell n-1)
			\right)=
			\sum_{\ell=0}^{r-1}B_{r,n-1}(j-\ell n).
		\end{aligned}
		\]
		This proves~\eqref{fd:recurrence}.
		
		Assume now that $F_{r,n-1}$ is unimodal. Then
		\begin{equation*}
			B_{r,n-1}(j)\ge0
			\qquad\text{for }1\le j\le\left\lfloor \mu_{r,n-1}\right\rfloor.
		\end{equation*}
		By~\eqref{fd:recurrence}, for $1\le j\le\left\lfloor \mu_{r,n-1}\right\rfloor,$
		\begin{align*}
			B_{r,n}(j)=\sum_{\ell=0}^{r-1}B_{r,n-1}(j-\ell n)\ge 0
		\end{align*}
		since $j-\ell n\le j\le\left\lfloor \mu_{r,n-1}\right\rfloor $ for all $0\le \ell\le r-1$.
		This completes the proof.
	\end{proof}
	
	Combining Propositions~\ref{fd:window} and~\ref{fd:induction},
    we obtain the following result.
	\begin{theorem}\label{checkinginterval}
	Suppose that $F_{r,n-1}(q)$ is unimodal. 
	If $B_{r,n}(j)\ge 0$  for 
	\begin{equation*}
		\left\lfloor\mu_{r,n-1}\right\rfloor
		<j\le
		\left\lfloor\mu_{r,n}\right\rfloor,
	\end{equation*}
	then  $F_{r,n}(q)$ is unimodal.
	\end{theorem}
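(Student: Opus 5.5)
The plan is to reduce unimodality of $F_{r,n}(q)$ to the criterion in Proposition~\ref{fd:window}, namely $B_{r,n}(j)\ge 0$ for all $1\le j\le\lfloor\mu_{r,n}\rfloor$. We then split this range of $j$ into two pieces. The lower piece is handled by the inductive hypothesis through Proposition~\ref{fd:induction}. The upper piece is exactly what the statement assumes.

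First, since $F_{r,n-1}(q)$ is unimodal, the second part of Proposition~\ref{fd:induction} gives $B_{r,n}(j)\ge0$ for $1\le j\le\lfloor\mu_{r,n-1}\rfloor$. Here only $n-1\ge1$ matters for the cited result to apply; when $n=1$ the lower range is empty, since $\mu_{r,0}=0$, and the argument still goes through. Second, the hypothesis of the theorem gives $B_{r,n}(j)\ge0$ for $\lfloor\mu_{r,n-1}\rfloor<j\le\lfloor\mu_{r,n}\rfloor$.

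Because $\mu_{r,n-1}\le\mu_{r,n}$, these two ranges together cover every integer $j$ with $1\le j\le\lfloor\mu_{r,n}\rfloor$. Proposition~\ref{fd:window} then shows that $F_{r,n}(q)$ is unimodal.

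There is no real obstacle here. The only points to check are the ordering $\lfloor\mu_{r,n-1}\rfloor\le\lfloor\mu_{r,n}\rfloor$, which is immediate from the formula $\mu_{r,n}=(r-1)n(n+1)/4$, and the fact that the two integer ranges meet with no gap.
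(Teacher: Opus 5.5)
Your proposal is correct and matches the paper's argument: the paper derives this theorem by combining Proposition~\ref{fd:induction}, which gives $B_{r,n}(j)\ge0$ for $1\le j\le\lfloor\mu_{r,n-1}\rfloor$, with the hypothesis on the remaining range and the criterion of Proposition~\ref{fd:window}. Your remark about the degenerate case $n=1$, where the lower range is empty, is a harmless extra check.
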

	
We end this section by showing the nonnegativity  of $B_{r,n}(j)$
in the range $0\le j<2r$.

	\begin{lemma}\label{fd:initial}
		Let $r\ge2$ and $n\ge3$ be integers. Then we have $B_{r,n}(j)\ge0$ for $0\le j< 2r.$
	\end{lemma}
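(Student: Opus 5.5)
The plan is to factor out the part of size $1$ and reduce the statement to a simple injection between partition sets. For $j$ in the range $0\le j<2r$ the multiplicity restriction is essentially invisible for parts of size at least $2$, so no analytic estimate should be needed.

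\textbf{Step 1 (factorisation).} The factor $k=1$ of $F_{r,n}(q)$ is $(1-q^r)/(1-q)$, so
\[
(1-q)F_{r,n}(q)=(1-q^r)\,G(q),\qquad G(q)=\prod_{k=2}^{n}\bigl(1+q^k+\cdots+q^{(r-1)k}\bigr)=\sum_{m\ge0}g(m)q^m .
\]
Here $g(m)$ counts partitions of $m$ into parts from $\{2,\dots,n\}$, each part used at most $r-1$ times. Comparing coefficients gives
\[
B_{r,n}(j)=g(j)-g(j-r),
\]
with $g$ taken to vanish at negative arguments. For $0\le j<r$ this yields $B_{r,n}(j)=g(j)\ge0$ immediately.

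\textbf{Step 2 (removing the multiplicity cap).} For $m<2r$, a part $k\ge2$ used $r$ or more times would contribute at least $2r>m$. So for $m<2r$ the multiplicity cap never binds, and $g(m)=p_{\{2,\dots,n\}}(m)$, the number of unrestricted partitions of $m$ into parts from $\{2,\dots,n\}$. It therefore remains to show
\[
p_{\{2,\dots,n\}}(m+r)\ge p_{\{2,\dots,n\}}(m)\qquad\text{for }0\le m<r .
\]

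\textbf{Step 3 (injection).} This is the only place where $n\ge3$ is used, and it is the step that needs care. Since $r\ge2$, we can write $r=2a+3b$ with integers $a,b\ge0$. Because $n\ge3$, both $2$ and $3$ are allowed parts. The map that adjoins $a$ parts equal to $2$ and $b$ parts equal to $3$ to a partition of $m$ produces a partition of $m+r$ with parts in $\{2,\dots,n\}$. This map is injective, since the original partition is recovered by deleting those parts. Hence $g(j)\ge g(j-r)$ for $r\le j<2r$, which gives $B_{r,n}(j)\ge0$ on the whole range $0\le j<2r$.

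The reason for the hypothesis $n\ge3$ is that with only the part $2$ available (the case $n=2$) and $r$ odd, no such shift exists, consistent with the known failures of unimodality for $F_{r,2}(q)$.
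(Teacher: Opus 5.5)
Your proof is correct and follows essentially the same route as the paper. Both arguments discard the multiplicity cap below degree $2r$ (equivalently, the factors $1-q^{rk}$ for $k\ge2$), which reduces the claim to nonnegativity of the coefficients of $(1-q^r)/\prod_{k=2}^n(1-q^k)$. Both then use $r=2u+3v$ with the parts $2$ and $3$ available. Your injection that adjoins $a$ twos and $b$ threes is the combinatorial form of the paper's identity $1-q^r=(1-q^2)\sum_{a=0}^{u-1}q^{2a}+vq^{2u}(1-q^3)$.
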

	
	\begin{proof}
		By definition,
		$B_{r,n}(j)=[q^j](1-q)F_{r,n}(q).$
		Since \(n\ge 3\), we have
		\[
		\begin{aligned}
			(1-q)F_{r,n}(q)
			=\frac{\prod_{k=1}^{n}(1-q^{rk})}
			{\prod_{k=2}^{n}(1-q^k)}
			=\frac{1-q^r}{(1-q^2)(1-q^3)}
			\prod_{k=4}^{n}\frac{1}{1-q^k}
			\prod_{k=2}^{n}(1-q^{rk}).
		\end{aligned}
		\]
		
		Write \(r=2u+3v\), where \(u\ge 0\) and \(v\in\{0,1\}\);
		take \(v=0\) when \(r\) is even and \(v=1\)  when \(r\) is odd.
		Since $v\in\{0,1\}$,
		we have $1-q^{3v}=v(1-q^3)$ and 
		\begin{align*}
			1-q^r=1-q^{2u+3v}=\left(1-q^{2u}\right)+q^{2u}\left(1-q^{3v}\right)
			=(1-q^2)\sum_{a=0}^{u-1}q^{2a}+vq^{2u}\left(1-q^{3}\right).
		\end{align*}
		Dividing by $(1-q^2)(1-q^3)$, we obtain
		\[
		\frac{1-q^r}{(1-q^2)(1-q^3)}
		=
		\frac{\displaystyle\sum_{a=0}^{u-1}q^{2a}}{1-q^3}
		+\frac{v q^{2u}}{1-q^2},
		\]
		where an empty sum is interpreted as zero.
		Both terms have nonnegative coefficients as formal power
		series. Consequently,
		\[
		\frac{1-q^r}{(1-q^2)(1-q^3)}
		\prod_{k=4}^{n}\frac{1}{1-q^k}
		\]
		also has nonnegative coefficients, with an empty product
		interpreted as \(1\).
		
		Finally, every nonconstant term in
		$
		\prod_{k=2}^{n}(1-q^{rk})
		$
		has degree at least \(2r\), since \(rk\ge 2r\) for \(k\ge 2\).
		Its constant term is \(1\).  Hence, for \(0\le j<2r\),
		\[
		B_{r,n}(j)
		=
		[q^j]\left(
		\frac{1-q^r}{(1-q^2)(1-q^3)}
		\prod_{k=4}^{n}\frac{1}{1-q^k}
		\right)
		\ge 0.
		\]
		This completes the proof.
	\end{proof}

	\section{Proof of Conjecture~\ref{con:main} for $n\le 11$}
	\label{sec:lowrows}
In this section,
we first prove unimodality of $F_{r,n}(q)$ for even $r\ge22$ and $n=3,4$.
For the remaining cases $5\le n\le11$, we will show that
$B_{r,n}(j)\ge0$ for the required ranges of $j$.
Then, Conjecture~\ref{con:main} for $n\le 11$ follows.
	
	\subsection{Proof of Conjecture~\ref{con:main} for $n=3$ and $n=4$}
  We first deal with the case $n=3$.
  	\begin{proposition}\label{sc:three}
  	For every integer $r\ge1$, the polynomial $F_{r,3}(q)$ is
  	unimodal.
  \end{proposition}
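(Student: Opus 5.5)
The plan is to reduce unimodality of $F_{r,3}(q)$ to finitely many first differences via Proposition~\ref{fd:window}, and then observe that all of those differences lie in the window already handled by Lemma~\ref{fd:initial}. No induction on $n$ (and hence no information about $F_{r,2}$, which fails to be unimodal for odd $r$) should be needed.

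The case $r=1$ is trivial, since $F_{1,3}(q)=1$. Assume $r\ge2$.

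First, by Proposition~\ref{fd:window} with $n=3$, $F_{r,3}(q)$ is unimodal if and only if
\[
B_{r,3}(j)\ge0\qquad\text{for }1\le j\le\lfloor\mu_{r,3}\rfloor .
\]
Here
\[
\mu_{r,3}=\frac{(r-1)\cdot 3\cdot 4}{4}=3(r-1).
\]
So we need nonnegativity of $B_{r,3}(j)$ for $1\le j\le 3r-3$.

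Second, Lemma~\ref{fd:initial} (which applies since $n=3\ge3$) gives $B_{r,3}(j)\ge0$ for $0\le j<2r$. This covers the part of the range with $j\le 2r-1$. However, it does not cover $2r\le j\le 3r-3$ once $r\ge3$, so that remaining portion must be handled separately. This is the main step.

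For it, I would use the explicit generating function from the proof of Lemma~\ref{fd:initial}:
\[
(1-q)F_{r,3}(q)=\frac{(1-q^r)(1-q^{2r})(1-q^{3r})}{(1-q^2)(1-q^3)}.
\]
For $j<3r$ the factor $1-q^{3r}$ can be dropped. The factor $1-q^{2r}$ contributes a correction, which gives
\[
B_{r,3}(j)=c(j)-c(j-2r),\qquad c(m):=[q^m]\frac{1-q^r}{(1-q^2)(1-q^3)}.
\]
The decomposition in Lemma~\ref{fd:initial} makes $c$ explicit:
\begin{itemize}
\item for even $r=2u$, $c(m)$ counts solutions of $m=2a+3b$ with $0\le a\le u-1$ and $b\ge0$;
\item for odd $r$, there is an extra term from $q^{2u}/(1-q^2)$.
\end{itemize}
One then checks that, for $m$ in the range $[2r,3r-3]$, $c(m)$ is at least $c(m-2r)$, where $m-2r\in[0,r-3]$ is small.

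For instance, in the even case, once $m\ge 2r-2$ the number of admissible $a$ (with $m-2a\equiv0\pmod3$ and $m-2a\ge0$) is about $u/3$, hence at least $1$ whenever $c(m-2r)\ge1$. For small $m'=m-2r<r$, $c(m')$ is the number of representations of $m'$ as $2a+3b$, which is roughly $m'/6$. I expect a short counting comparison, splitting by residues of $m$ modulo $6$, to give $c(m)\ge c(m-2r)$ in this range, and similarly for odd $r$.

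This residue bookkeeping is the only real obstacle. If it becomes messy, the fallback is to write $c(m)$ as an explicit quasi-polynomial in $m$ and $r$ (period $6$) and compare the two sides directly. Small $r$ can be checked by computation.
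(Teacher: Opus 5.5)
Your reduction is the paper's own. Proposition~\ref{fd:window} cuts the problem to $1\le j\le 3r-3$, Lemma~\ref{fd:initial} handles $j<2r$, and $1-q^{3r}$ is dropped for $j<3r$. Your $c(j)-c(j-2r)$ is exactly the paper's $c_0(j)-c_0(j-r)-c_0(j-2r)$, with $c_0(m)=[q^m]\,1/((1-q^2)(1-q^3))$.

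The gap is that the only nontrivial part of the proposition, $c(m)\ge c(m-2r)$ for $2r\le m\le 3r-3$, is never proved. It is stated as an expectation, the odd case is dismissed with ``similarly'', and small $r$ is deferred to computation. Your heuristic does not lead to it. Knowing $c(m)\ge 1$ whenever $c(m-2r)\ge1$ is not the required inequality. Estimates of the type ``about $u/3$ versus roughly $m'/6$'' carry $O(1)$ errors, but the inequality has no slack. In fact $B_{r,3}(3r-4)=0$ for every $r\ge4$ (e.g.\ $A_{4,3}(7)=A_{4,3}(8)=5$), and $B_{r,3}(3r-3)=0$ whenever $3\mid r$. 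So no asymptotic estimate plus a finite check can work; you need an exact evaluation valid for all $r$ and both parities.

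Your framework can be completed, but only by matching residue classes exactly. Take $r=2u$ and $m\in[2r,3r-3]$. Then $c(m)=\#\{0\le a\le u-1:\ a\equiv 2m\pmod 3\}$ and $c(m-2r)=\#\{0\le a\le K:\ a\equiv 2m+u\pmod 3\}$, where $K=\lfloor (m-2r)/2\rfloor\le u-2$. The crude comparison $\lfloor u/3\rfloor\ge\lceil (u-1)/3\rceil$ fails for $u\equiv 2\pmod 3$. In that case you must observe that $c(m-2r)$ can exceed $\lfloor u/3\rfloor$ only in the class $a\equiv 0$. That forces $2m\equiv 1\pmod 3$, a class with $\lceil u/3\rceil$ elements in $[0,u-1]$.

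For $r=2u+3$ there is an extra term $[m\ \text{even}]-[m-2r=2u]$. Moreover, for $m-2r\in\{2u-2,2u-1,2u\}$ the two ranges of $a$ coincide, so these top values need a separate check; this is where the equalities $B=0$ occur.

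The paper avoids this case analysis with the closed form $c_0(m)=\lfloor m/2\rfloor-\lfloor (m-1)/3\rfloor$. Writing $3r-3-j=6h+v$ with $0\le v\le 5$ and $r\equiv t\pmod 3$, it shows $B_{r,3}(j)=h+g(t,v)$, where $g$ is an explicit $3\times 6$ table with entries in $\{0,1\}$. This is precisely your quasi-polynomial fallback, and carrying it out is what the proof actually consists of.
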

  
  \begin{proof}
  	The case $r=1$ follows from $F_{1,3}(q)=1$.
  	Assume $r\ge2$. Since $D_{r,3}=6r-6$,
  	Proposition~\ref{fd:window} reduces unimodality to
  	$B_{r,3}(j)\ge0$ for $1\le j\le3r-3$.
  	Lemma~\ref{fd:initial} covers $j<2r$, so it remains to
  	prove  $B_{r,3}(j)\ge0$ for $2r\le j\le 3r-3.$
  	
  	For $m\ge0$, let
  	\[
  	c(m)
  	=[q^m]\frac{1}{(1-q^2)(1-q^3)}
  	=\#\{(x,y)\in\mathbb Z_{\ge0}^2:2x+3y=m\}.
  	\]
  	Let $k=x+y$. Then $x=3k-m$ and $y=m-2k$.
  	Every $k$ corresponds to a pair $(x,y)$ and
    $$\left\lceil \frac{m}{3} \right\rceil \le k\le \left\lfloor \frac{m}{2}\right\rfloor,\qquad
    c(m)
  		=\left\lfloor\frac m2\right\rfloor
  		-\left\lfloor\frac{m-1}{3}\right\rfloor.$$

  	Fix $2r\le j\le3r-3$. Then
  	\begin{align*}
  		B_{r,3}(j)=[q^j](1-q)F_{r,3}(q)&=[q^j]\frac{(1-q^r)(1-q^{2r})(1-q^{3r})}
  		{(1-q^2)(1-q^3)}.
  	\end{align*}
  	Since $j<3r$,  we have 
  	\begin{equation*}
  		\begin{aligned}
  			B_{r,3}(j)
  			&=c(j)-c(j-r)-c(j-2r)\\
  			&=\left\lfloor\frac j2\right\rfloor
  			-\left\lfloor\frac{j-1}{3}\right\rfloor-\left\lfloor\frac{j-r}{2}\right\rfloor
  			+\left\lfloor\frac{j-r-1}{3}\right\rfloor
  			-\left\lfloor\frac{j-2r}{2}\right\rfloor
  			+\left\lfloor\frac{j-2r-1}{3}\right\rfloor\\
  			&=r	-\left\lfloor\frac{j-1}{3}\right\rfloor-\left\lfloor\frac{j-r}{2}\right\rfloor
  			+\left\lfloor\frac{j-r-1}{3}\right\rfloor
  			+\left\lfloor\frac{j-2r-1}{3}\right\rfloor.
  		\end{aligned}
  	\end{equation*}
  	The last equality follows from
  	\[
  	\left\lfloor\frac j2\right\rfloor
  	-\left\lfloor\frac{j-2r}{2}\right\rfloor=r.
  	\]
  	
  	Write $
  	3r-3-j=6h+v$ with $h\ge 0$ and $0\le v\le 5.$
  	We obtain
  	\[
  	\begin{aligned}
  		B_{r,3}(j)
  		={}&r
  		-\left\lfloor\frac{3r-4-6h-v}{3}\right\rfloor
  		-\left\lfloor\frac{2r-3-6h-v}{2}\right\rfloor\\
  		&+\left\lfloor\frac{2r-4-6h-v}{3}\right\rfloor
  		+\left\lfloor\frac{r-4-6h-v}{3}\right\rfloor\\
  		={}&r
  		-\left(r-2h+\left\lfloor\frac{-v-4}{3}\right\rfloor\right)
  		-\left(r-3h+\left\lfloor\frac{-v-3}{2}\right\rfloor\right)\\
  		&+\left(-2h+\left\lfloor\frac{2r-v-4}{3}\right\rfloor\right)
  		+\left(-2h+\left\lfloor\frac{r-v-4}{3}\right\rfloor\right)\\
  		={}&h-r
  		-\left\lfloor\frac{-v-4}{3}\right\rfloor
  		-\left\lfloor\frac{-v-3}{2}\right\rfloor
  		+\left\lfloor\frac{2r-v-4}{3}\right\rfloor
  		+\left\lfloor\frac{r-v-4}{3}\right\rfloor.
  	\end{aligned}
  	\]
  	Let \(t\in\{0,1,2\}\) be the remainder of \(r\) modulo \(3\).
  	Since \((r-t)/3\) is an integer, we have
  	\[
  	\begin{aligned}
  		\left\lfloor\frac{2r-v-4}{3}\right\rfloor
  		=\frac{2(r-t)}{3}
  		+\left\lfloor\frac{2t-v-4}{3}\right\rfloor,\qquad
  		\left\lfloor\frac{r-v-4}{3}\right\rfloor
  		=\frac{r-t}{3}
  		+\left\lfloor\frac{t-v-4}{3}\right\rfloor.
  	\end{aligned}
  	\]
  	Consequently, $B_{r,3}(j)-h$ equals
  	\[
  	\begin{aligned}
  		&-r+\frac{2(r-t)}{3}+\frac{r-t}{3}
  		-\left\lfloor\frac{-v-4}{3}\right\rfloor
  		-\left\lfloor\frac{-v-3}{2}\right\rfloor+\left\lfloor\frac{2t-v-4}{3}\right\rfloor
  		+\left\lfloor\frac{t-v-4}{3}\right\rfloor\\
  		&=-t
  		-\left\lfloor\frac{-v-4}{3}\right\rfloor
  		-\left\lfloor\frac{-v-3}{2}\right\rfloor
  		+\left\lfloor\frac{2t-v-4}{3}\right\rfloor
  		+\left\lfloor\frac{t-v-4}{3}\right\rfloor.
  	\end{aligned}
  	\]
  	For \(t\in\{0,1,2\}\) and \(0\le v\le5\), the values
  	of the last expression are given in the following table:
  	\[
  	\begin{array}{c|cccccc}
  		t\backslash v & 0&1&2&3&4&5\\ \hline
  		0 & 0&0&1&0&1&1\\
  		1 & 1&0&0&1&1&0\\
  		2 & 1&0&0&1&1&0
  	\end{array}
  	\]
  	All entries are nonnegative. 
  	This completes the proof.
  \end{proof}

    Let $f(q)=a_0+a_1q+\cdots+a_dq^d$ be a polynomial of degree $d$.
    We say $f(q)$ is \emph{symmetric} if $a_k=a_{d-k}$ for all $k$.
  We introduce a result on unimodal polynomials that will be used in the proof of the case $n=4.$
  \begin{lemma}\label{sc:closure}
  	Let $P(q)$ be a polynomial of odd degree with nonnegative,
  	symmetric, unimodal coefficients. Then
  	\[
  	Q(q)=(1+q)(1+q^3)P(q^2)
  	\]
  	also has nonnegative, symmetric, unimodal coefficients.
  \end{lemma}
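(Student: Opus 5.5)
Write $P(q)=\sum_{k=0}^{d}p_kq^k$ with $d$ odd, $p_k=p_{d-k}\ge0$, and $p$ unimodal. Since $d$ is odd and $P$ is symmetric and unimodal, the mode sits in the middle pair: $p_0\le\cdots\le p_{(d-1)/2}=p_{(d+1)/2}\ge\cdots\ge p_d$. Expanding $(1+q)(1+q^3)=1+q+q^3+q^4$, the coefficients of $Q$ are
\[
Q_m=[q^m]Q(q)=p'_m+p'_{m-1}+p'_{m-3}+p'_{m-4},
\]
where $p'_{2k}=p_k$ and $p'_{\text{odd}}=0$, with $p_k=0$ for $k<0$ or $k>d$.

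Splitting by parity of $m$ gives closed forms. For $m=2k$ we get $Q_{2k}=p_k+p_{k-2}$, and for $m=2k+1$ we get $Q_{2k+1}=p_k+p_{k-1}$. Nonnegativity is immediate.

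Symmetry also follows directly. $Q$ is a product of symmetric polynomials, so it is symmetric of degree $2d+4$. One can also check this from the formulas: $Q_{2d+4-2k}=p_{d+2-k}+p_{d-k}=p_{k-2}+p_k$.

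For unimodality, by symmetry it suffices to show $Q_m\le Q_{m+1}$ for $0\le m<d+2$, since the centre index is $d+2$. There are two types of steps.

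\emph{Even to odd.} We need $Q_{2k+1}-Q_{2k}=p_{k-1}-p_{k-2}\ge0$. For $2k<d+2$ we have $k-1\le (d-1)/2$, so this holds by monotonicity on the rising half.

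\emph{Odd to even.} We need $Q_{2k+2}-Q_{2k+1}=p_{k+1}-p_{k-1}\ge0$ for $2k+1<d+2$, i.e.\ $k\le (d-1)/2$. When $k+1\le (d-1)/2$ this is immediate. In the boundary case $k=(d-1)/2$, we have $k+1=(d+1)/2$, and symmetry gives $p_{(d+1)/2}=p_{(d-1)/2}\ge p_{(d-3)/2}=p_{k-1}$, which is exactly what is needed.

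The only delicate point is this boundary bookkeeping at the centre. The oddness of $d$ is what makes the two middle coefficients of $P$ equal and so lets the comparison across the middle go through. I would double-check the endpoint cases $k=0,1$, where negative indices give zero terms, and the exact centre index $d+2$. Everything else is routine index arithmetic.
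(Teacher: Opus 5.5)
Your approach is the paper's: expand $(1+q+q^3+q^4)P(q^2)$ to get $Q_{2k}=p_k+p_{k-2}$ and $Q_{2k+1}=p_k+p_{k-1}$. Then check first differences up to the centre, using monotonicity of the rising half of $P$ and the equality of its two middle coefficients.

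Your odd-to-even difference, however, is miscomputed. In fact
\[
Q_{2k+2}-Q_{2k+1}=(p_{k+1}+p_{k-1})-(p_k+p_{k-1})=p_{k+1}-p_k,
\]
not $p_{k+1}-p_{k-1}$. As written, that step proves $p_{k+1}\ge p_{k-1}$, which is not the inequality needed: it does not by itself give $p_{k+1}\ge p_k$.

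The repair is immediate and uses only the facts you already invoke. For $k+1\le (d-1)/2$ we have $p_{k+1}\ge p_k$ by monotonicity. At the boundary $k=(d-1)/2$ the difference is $p_{(d+1)/2}-p_{(d-1)/2}=0$ by symmetry, and this is exactly where the oddness of $d$ enters. For even $d$ the last step would be $p_{d/2+1}-p_{d/2}\le 0$; for example, $P=1+2q+q^2$ gives $Q$ with coefficients $1,1,2,3,2,3,2,1,1$.

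With this correction your argument coincides with the paper's. Writing $\deg P=2d+1$, the paper records $b_{2k}-b_{2k-1}=a_k-a_{k-1}$ for $1\le k\le d+1$ and $b_{2k+1}-b_{2k}=a_{k-1}-a_{k-2}$ for $0\le k\le d+1$.
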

  
  \begin{proof}
  	Write
  	\[
  	P(q)=\sum_{k=0}^{2d+1}a_kq^k,
  	\qquad
  	Q(q)=\sum_{k=0}^{4d+6}b_k q^k,
  	\]
  	and set $a_k=0$ outside $0\le k\le2d+1$.
  	The assumptions on $P$ give
  	\[
  	0\le a_0\le a_1\le\cdots\le a_d=a_{d+1}.
  	\]
  	Moreover, $Q$ has nonnegative, symmetric coefficients.
  	
  	Note that 
  	$
  	Q(q)=(1+q+q^3+q^4)P(q^2).
  	$
  	We have $b_{2k}=a_k+a_{k-2}$ and $b_{2k+1}=a_k+a_{k-1}$.	
  	Thus
  	\[
  	\begin{aligned}
  		b_{2k}-b_{2k-1}
  		&=a_k-a_{k-1}\ge0,
  		&&1\le k\le d+1,\\
  		b_{2k+1}-b_{2k}
  		&=a_{k-1}-a_{k-2}\ge0,
  		&&0\le k\le d+1.
  	\end{aligned}
  	\]
  	This completes the proof.
  \end{proof}
  
  We now deal with the case $n=4$.
  
  \begin{proposition}\label{sc:small}
  	For every even $r\ge2$, $F_{r,4}(q)$ is unimodal.
  \end{proposition}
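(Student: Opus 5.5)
We would prove Proposition~\ref{sc:small} by reducing it to Lemma~\ref{sc:closure}. Write $r=2s$ with $s\ge1$, and for a positive integer $m$ put $[m]_x=(1-x^m)/(1-x)=1+x+\cdots+x^{m-1}$. The factors of $F_{r,4}(q)$ with odd index $k$ split because $r$ is even:
\[
1+q^k+\cdots+q^{(2s-1)k}=(1+q^k)\bigl(1+q^{2k}+\cdots+q^{2(s-1)k}\bigr).
\]
The factors with $k=2,4$ are already polynomials in $q^2$. Collecting terms, we get
\[
F_{r,4}(q)=(1+q)(1+q^3)\,P(q^2),\qquad
P(x)=[s]_x\,[s]_{x^3}\,[r]_x\,[r]_{x^2}.
\]
The degree of $P$ is $(s-1)+3(s-1)+(r-1)+2(r-1)=10s-7$, which is odd. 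Each factor $[m]_{x^a}$ has nonnegative, symmetric coefficients, so $P$ has nonnegative, symmetric coefficients. Therefore, by Lemma~\ref{sc:closure}, the proposition follows once we show that $P$ is unimodal.

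For the unimodality of $P$, we use the standard fact that a product of polynomials with nonnegative, symmetric, unimodal coefficients again has nonnegative, symmetric, unimodal coefficients. The factor $[r]_x[r]_{x^2}=F_{r,2}(x)$ is symmetric and unimodal by Theorem~\ref{thm:finite}(ii), since $r$ is even. It would then suffice to show that $[s]_x[s]_{x^3}$ is unimodal, but this is false in general; for example, $s=2$ gives $1+x+x^3+x^4$. So the factors have to be grouped differently. Our first choice is to form the trapezoid $T(x)=[s]_x[r]_x$, whose coefficients rise by $1$ over $s$ steps, stay flat for $s+1$ steps, and then fall. We then handle the remaining factor $[s]_{x^3}[r]_{x^2}$ by a direct argument on first differences.

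The differences of $T$ are $+1$ on $[0,s-1]$, $0$ on the plateau, and $-1$ on the mirrored range. Hence the first differences of $T(x)[s]_{x^3}$ equal the number of $i\in\{0,\dots,s-1\}$ with $j-3i\in[0,s-1]$ minus the number with $j-3i$ in the falling range. This can be evaluated with floor functions, in the same way as the table computation in the proof of Proposition~\ref{sc:three}. We would show that this quantity is nonnegative up to the center of $T(x)[s]_{x^3}$. If that holds, $T(x)[s]_{x^3}$ is symmetric and unimodal, and we multiply by $[r]_{x^2}$. This last factor is not unimodal as a polynomial in $x$, so we would instead repeat the difference count with spikes at even positions. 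Alternatively, we would regroup as $\bigl([r]_x[r]_{x^2}\bigr)\cdot\bigl([s]_x[s]_{x^3}\bigr)$ and compute the first differences of the full product $(1-x)P(x)$ directly, using the explicit shape of $F_{r,2}$.

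The main obstacle is this unimodality check for $P$. The spike trains $[s]_{x^3}$ and $[r]_{x^2}$ create parity and mod-$3$ fluctuations, so the verification depends on the residues of $s$ modulo $6$. We would organize it as in Proposition~\ref{sc:three}: write the index relative to the center, reduce to a finite table of floor expressions indexed by residues, and check that every entry is nonnegative.
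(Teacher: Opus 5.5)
Your reduction matches the paper's. The factorization $F_{r,4}(q)=(1+q)(1+q^3)P(q^2)$ with $P(x)=[s]_x[s]_{x^3}[r]_x[r]_{x^2}$ of odd degree $10s-7$ is correct, and Lemma~\ref{sc:closure} does finish the job once $P$ is known to be unimodal. But that unimodality is the entire content of the proposition after the reduction, and your proposal does not establish it. You give only a plan: a trapezoid times $[s]_{x^3}$, then a second difference count after multiplying by $[r]_{x^2}$, or alternatively a direct count for the whole product by residues of $s$ modulo $6$. The floor-function tables are never written down or checked. You note yourself that $[r]_{x^2}$ is not unimodal, so no closure theorem is available at that step. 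Nothing in the proposal guarantees that the tables would have nonnegative entries, so as it stands this is a gap, not a proof.

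The missing idea is a regrouping that makes the case analysis unnecessary. Since $r=2s$, pairing each numerator factor with the other denominator gives
\[
[r]_x[r]_{x^2}=\frac{(1-x^{2s})(1-x^{4s})}{(1-x)(1-x^2)}=\frac{1-x^{4s}}{1-x}\cdot\frac{1-x^{2s}}{1-x^2}=[4s]_x\,[s]_{x^2}.
\]
Hence
\[
P(x)=[4s]_x\cdot[s]_x[s]_{x^2}[s]_{x^3}=[4s]_x\,F_{s,3}(x).
\]
Here $F_{s,3}$ is nonnegative, symmetric and unimodal by Proposition~\ref{sc:three}, which holds for every $s\ge1$. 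The factor $[4s]_x$ has all coefficients equal to $1$. The closure fact you already quote (a product of nonnegative, symmetric, unimodal polynomials is again such) then gives the unimodality of $P$, and Lemma~\ref{sc:closure} completes the proof. This is exactly the paper's argument. Your observation that $[s]_x[s]_{x^3}$ is not unimodal points to this fix: the factor $[s]_{x^2}$ needed to complete it to $F_{s,3}$ is hidden inside $[r]_x[r]_{x^2}$.
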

  \begin{proof}
  	Let $r=2k$. A direct calculation gives
  	\[
  	F_{2k,4}(q)
  	=(1+q)(1+q^3)G(q^2)F_{k,3}(q^2),
  	\]
  	where $G(q)=\sum_{\ell=0}^{4k-1}q^{\ell}$.

  	Sun, Wang and Zhang~\cite[Corollary 3.5]{SWZ15} showed that
  	the product of two polynomials with nonnegative,
  	symmetric, unimodal coefficients has the same property.
  	The polynomial $G(q)$ has all coefficients equal to \(1\).
  	By Propositions~\ref{fd:window} and \ref{sc:three}
  	the coefficients of \(F_{k,3}(q)\) are nonnegative, symmetric, and unimodal.
  	Thus, their product $G(q)F_{k,3}(q)$	also has nonnegative, symmetric, unimodal coefficients.
  	Furthermore,
  	\[
  	\deg [G(q)F_{k,3}(q)]
  	=(4k-1)+6(k-1)
  	=10k-7,
  	\]
  	which is odd.
  	Lemma~\ref{sc:closure} therefore applies to \(G(q)F_{k,3}(q)\) and shows that
  	\[
  	F_{2k,4}(q)=(1+q)(1+q^3)G(q^2)F_{k,3}(q^2)
  	\]
  	has nonnegative, symmetric, unimodal coefficients.
  \end{proof}

    \subsection{Proof of Conjecture~\ref{con:main} for $5\le n\le 11$}
    In this subsection, we prove that the conjecture is true for even $r\ge 22$  and $5\le n \le 11,$
    and for odd $r\ge 21$ and $n=11$.
    We will prove that $B_{r,n}(j)\ge 0$ in the required ranges for both cases.
   We will derive an exact expression for $B_{r,n}(j)$ 
   using the polynomial representations of restricted partition functions.
   We then use this expression to obtain rational lower bounds.

	\subsubsection{Polynomial representation of $B_{r,n}(j)$}
    We will give an expression (c.f.~\eqref{low:coefficient}) of $B_{r,n}(j)$  using a polynomial representation.
    Assume that \(n\ge 5\).
	Write
	\begin{equation}\label{def:c}
	\prod_{k=1}^{n}(1-q^k)
	=\sum_{\ell=0}^{\frac{n(n+1)}2}u_{n,\ell}q^\ell,\qquad \prod_{k=2}^{n}\frac{1}{1-q^k}=\sum_{m\ge 0} c_n(m)q^m,
	\end{equation}
	where $c_n(m)=0$ for $m<0$.
	Thus, 
	\[
	\begin{aligned}
		(1-q)F_{r,n}(q)=\frac{\prod_{k=1}^{n}(1-q^{rk})}
		{\prod_{k=2}^{n}(1-q^k)}=\left(\sum_{\ell=0}^{\frac{n(n+1)}2}u_{n,\ell}q^{r\ell}\right)
		\left(\sum_{m\ge0}c_n(m)q^m\right).
	\end{aligned}
	\]
	Comparing the coefficients of $q^j$ on both sides, we obtain
	\begin{equation}\label{B=UC}
		B_{r,n}(j)
		=[q^j](1-q)F_{r,n}(q)
		=\sum_{\ell=0}^{\frac{n(n+1)}2}u_{n,\ell}\,c_n(j-r\ell).
	\end{equation}

We now introduce some properties of $c_n(m).$
Gajdzica~\cite[Lemma 3.1]{Gaj22} presented the recurrence relation of $c_n(m)$:
	\begin{equation*}
		c_n(m)-c_n(m-n)=c_{n-1}(m).
	\end{equation*}
This recurrence can be obtained by 	comparing coefficients of $q^m$ on both sides of the equation
	\[
	(1-q^n)\prod_{k=2}^{n}\frac{1}{1-q^k}
	=
	\prod_{k=2}^{n-1}\frac{1}{1-q^k}.
	\]
Note that by~\eqref{def:c}, $c_n(m)$ counts partitions of $m$ whose parts belong to
$\{2,3,\ldots,n\}$:
$$
c_n(m)=\#\left\{(x_2,\ldots,x_n)\in \mathbb{Z}^{n-1}_{\ge 0}: 2x_2+3 x_3+\cdots+nx_n=m\right\}.
$$
	
	Define the least common multiple of $2,3,\ldots,n$ by
	\[
	M_n=\operatorname{lcm}(2,\ldots,n)
	=\min\{m\in\mathbb Z_{>0}:k\mid m\text{ for every }2\le k\le n\}.
	\]
	Thus $M_n$ is an even integer and $M_n/k$ is an integer for every $2\le k\le n$.
	We shall present a characterization of $c_n(m).$
	Before that, we introduce a result of 
	R{\o}dseth and Sellers~\cite[Theorem~1 and Section~7]{RodsethSellers2006}.
	
	\begin{lemma}\label{RS}
		If $A$ is a finite set of $d$ positive integers with
		$\gcd(A)=1$, then the number $p_A(m)$ of partitions of $m$ into parts
		in $A$ agrees, on each residue class modulo $\operatorname{lcm}(A)$,
		with a polynomial of degree $d-1$. Moreover, every such polynomial
		has leading coefficient
		\[
		\frac{1}{(d-1)!\prod_{a\in A}a}.
		\]
	\end{lemma}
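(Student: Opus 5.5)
We prove Lemma~\ref{RS} by partial fractions applied to the generating function
\[
\sum_{m\ge0}p_A(m)q^m=\prod_{a\in A}\frac{1}{1-q^a}.
\]
Let $L=\operatorname{lcm}(A)$. Every pole of this rational function is an $L$-th root of unity $\zeta$. The order of the pole at $\zeta$ equals $\#\{a\in A:\zeta^a=1\}$. At $\zeta=1$ every factor vanishes, so the order is exactly $d$. For $\zeta\ne1$ the order is at most $d-1$: if it were $d$, then $\zeta^a=1$ for all $a\in A$, so the order of $\zeta$ would divide every $a\in A$ and hence divide $\gcd(A)=1$, forcing $\zeta=1$.

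Next, expand into partial fractions. Since the numerator is $1$ and the denominator has degree $\sum_{a\in A}a\ge1$, there is no polynomial part, and
\[
\prod_{a\in A}\frac{1}{1-q^a}=\sum_{\zeta^L=1}\ \sum_{s=1}^{e_\zeta}\frac{\alpha_{\zeta,s}}{(1-q/\zeta)^s},
\]
where $e_\zeta$ is the pole order at $\zeta$. Each term satisfies
\[
[q^m](1-q/\zeta)^{-s}=\binom{m+s-1}{s-1}\zeta^{-m}.
\]
Here $\binom{m+s-1}{s-1}$ is a polynomial in $m$ of degree $s-1$, and $\zeta^{-m}$ depends only on $m\bmod L$. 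Summing over all terms, on each residue class $m\equiv \rho\pmod L$ we get
\[
p_A(m)=\sum_{\zeta}\zeta^{-\rho}\sum_{s}\alpha_{\zeta,s}\binom{m+s-1}{s-1}.
\]
This is a polynomial $P_\rho(m)$ of degree at most $d-1$. The identity holds for all $m\ge0$ in the class, because the partial-fraction identity is an identity of formal power series with no polynomial part.

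For the leading coefficient, only $\zeta=1$ with $s=d$ contributes a term of degree $d-1$, since all other poles have order at most $d-1$ and give degree at most $d-2$. Near $q=1$,
\[
1-q^a=(1-q)\bigl(1+q+\cdots+q^{a-1}\bigr),
\]
and the second factor equals $a$ at $q=1$. Hence
\[
\alpha_{1,d}=\lim_{q\to1}(1-q)^d\prod_{a\in A}\frac{1}{1-q^a}=\frac{1}{\prod_{a\in A}a}.
\]
The coefficient of $m^{d-1}$ in $\binom{m+d-1}{d-1}$ is $1/(d-1)!$. So every $P_\rho$ has leading coefficient
\[
\frac{1}{(d-1)!\prod_{a\in A}a},
\]
which is nonzero. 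In particular each $P_\rho$ has degree exactly $d-1$.

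The only step that needs care is the pole-order bound at $\zeta\ne1$, which is where $\gcd(A)=1$ is used. Without that hypothesis, other roots of unity would also have poles of order $d$, and their contributions would make the leading coefficient depend on the residue class. Everything else is standard generating-function bookkeeping.
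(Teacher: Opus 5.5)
The paper does not prove this lemma; it imports it from R{\o}dseth and Sellers (their Theorem~1 and Section~7) and uses it as a black box. Your partial-fraction argument is a correct, self-contained replacement. The following steps are all right:
\begin{itemize}
\item The pole count: each $1-q^a$ has only simple zeros and the numerator is $1$, so the pole order at $\zeta$ is $\#\{a:\zeta^a=1\}$.
\item The bound $e_\zeta\le d-1$ for $\zeta\ne1$, which is exactly where $\gcd(A)=1$ enters.
\item The absence of a polynomial part.
\item The value $\alpha_{1,d}=1/\prod_{a\in A}a$.
\end{itemize}
Together these give the quasi-polynomial structure and the residue-independent leading coefficient at once.

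Two remarks make your proof fit the paper's later use even better.

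First, your $P_\rho$ a priori has complex coefficients. It takes the integer values $p_A(\rho+kL)$ at $k=0,\dots,d-1$, so by Lagrange interpolation $P_\rho\in\mathbb{Q}[x]$, as Lemma~\ref{lemma-key} asserts.

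Second, your formula defines an extension $\widetilde p(m)=P_\rho(m)$ for all $m\in\mathbb{Z}$ with $m\equiv\rho \pmod L$. Checking termwise on each $(1-q/\zeta)^{-s}$ gives the reciprocity $\sum_{m\ge1}\widetilde p(-m)q^m=-\Phi(1/q)$, where $\Phi(q)=\prod_{a\in A}(1-q^a)^{-1}$. Since $\Phi(1/q)=(-1)^d q^{\sigma}\Phi(q)$ with $\sigma=\sum_{a\in A}a$, this yields $\widetilde p(-m)=(-1)^{d-1}p_A(m-\sigma)$ for $m\ge1$, with $p_A(k)=0$ for $k<0$. In particular $\widetilde p(-m)=0$ for $1\le m\le\sigma-1$. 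For $A=\{2,\dots,n\}$ this is exactly Lemma~\ref{low:negative-extension}, which the paper proves separately via the recurrence $\sum_j\gamma_j c_n(m-j)=0$ (or attributes to the same source's Theorem~3).

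So your route makes the argument self-contained and gives the negative extension essentially for free, while the citation simply keeps the paper short.
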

	We specialize these results to $A=\{2,\ldots,n\}$.
	The following lemma records the precise specialization of $c_n(m)$.
	
	\begin{lemma}\label{lemma-key}
		For each integer $s$ with $0\le s<M_n$, there exists
		a polynomial $P_s(x)\in\mathbb{Q}[x]$ of degree $n-2$
		such that for $m\ge0$ and $\ m\equiv s\pmod{M_n},$ we have 
		\[
		c_n(m)=P_s(m).
		\]
		Every $P_s(x)$ has leading coefficient
		$1/(n!(n-2)!)$ and is uniquely determined by
		\[
		P_s(s+M_nt)=c_n(s+M_nt),
		\qquad t=0,1,\ldots,n-2.
		\]
	\end{lemma}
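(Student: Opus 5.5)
We apply Lemma~\ref{RS} with $A=\{2,3,\ldots,n\}$. Here $d=|A|=n-1$, and since $2,3\in A$ we have $\gcd(A)=1$. Also $\operatorname{lcm}(A)=M_n$, and $p_A(m)=c_n(m)$ by the combinatorial description of $c_n(m)$ given after~\eqref{def:c}. So Lemma~\ref{RS} yields, for each residue class $s\bmod M_n$ with $0\le s<M_n$, a polynomial $P_s(x)\in\mathbb{Q}[x]$ of degree $d-1=n-2$ satisfying $c_n(m)=P_s(m)$ for all $m\ge0$ with $m\equiv s\pmod{M_n}$. Rationality of the coefficients follows from the uniqueness argument below: $P_s$ is the interpolating polynomial through integer data at integer nodes, so Lagrange interpolation shows $P_s\in\mathbb{Q}[x]$ in any case.

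The leading coefficient is read off directly from Lemma~\ref{RS}:
\[
\frac{1}{(d-1)!\prod_{a\in A}a}=\frac{1}{(n-2)!\cdot\prod_{k=2}^{n}k}=\frac{1}{(n-2)!\,n!},
\]
since $\prod_{k=2}^n k=n!$.

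For uniqueness, note that the $n-1$ points $s+M_nt$ for $t=0,1,\ldots,n-2$ are distinct nonnegative integers in the residue class of $s$. Hence $P_s(s+M_nt)=c_n(s+M_nt)$ holds at these points. If $Q$ is another polynomial of degree at most $n-2$ with the same values at these $n-1$ distinct nodes, then $P_s-Q$ has degree at most $n-2$ and at least $n-1$ roots, so $P_s=Q$. Thus $P_s$ is uniquely determined by these interpolation conditions, and it can be computed from finitely many values of $c_n$.

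There is no real obstacle here. The only points requiring care are verifying the hypotheses of Lemma~\ref{RS} (that $\gcd(A)=1$ and $\operatorname{lcm}(A)=M_n$), and noting that "agrees on each residue class" in Lemma~\ref{RS} applies to all $m\ge0$ in that class, including the small ones used as interpolation nodes, where $t=0$ gives $m=s\ge0$.
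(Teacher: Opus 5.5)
Your proposal is correct and essentially matches the paper's proof: both apply Lemma~\ref{RS} to $A=\{2,\ldots,n\}$ to get the residue-class polynomials with leading coefficient $1/(n!(n-2)!)$, then use that a polynomial of degree $n-2$ is determined by its values at the $n-1$ distinct nodes $s+M_nt$. Your remark that Lagrange interpolation through integer data at integer nodes forces $P_s\in\mathbb{Q}[x]$ is a small detail the paper leaves implicit.
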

	
	\begin{proof}
		Apply Lemma~\ref{RS}
		to $A=\{2,\ldots,n\}$. Since $\gcd(A)=1$, $|A|=n-1$,
		and $\prod_{a\in A}a=n!$, the stated polynomial
		representation and leading coefficient follow.
		Since $\deg P_s=n-2$, it is determined by $n-1$ distinct points $s+M_nt$,
		$t=0,1,\ldots,n-2$.
	\end{proof}
	
	For the subsequent coefficient estimates,  define
	\[
	R_s(x)=P_s\left(x-\frac{n(n+1)-2}{4}\right),
	\qquad 0\le s<M_n.
	\]
	Then by Lemma~\ref{lemma-key}, for $m\ge0,$ and $m\equiv s\pmod{M_n},$
	\begin{equation}\label{pol-sub}
		c_n(m)=R_s\left(m+\frac{n(n+1)-2}{4}\right).
	\end{equation}
	Each $R_s(x)$ has degree $n-2$ and leading coefficient
	$1/(n!(n-2)!)$.
	By definition, $c_n(m)=0$ for $m<0$.
	In the following, we will prove that~\eqref{pol-sub} still holds for some negative integers $m$.
    The following is a consequence of~\cite[Theorem 3]{RodsethSellers2006}.
    For the sake of completeness, we present a proof.

	\begin{lemma}\label{low:negative-extension}
		For every negative integer $m$ with $-\frac{n(n+1)-2}{2}<m<0$, 
		let $m\equiv s\pmod{M_n}$ and $0\le s <M_n$.
		 Then
		\begin{equation*}
			R_s\left(m+\frac{n(n+1)-2}{4}\right)=0.
		\end{equation*}
	\end{lemma}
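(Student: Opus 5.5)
The plan is to use the generating-function identity behind the quasipolynomial and a reciprocity (Ehrhart/Popoviciu-type) argument. Let $N=\frac{n(n+1)-2}{2}=2+3+\cdots+n$. Fix a residue $s$ modulo $M_n$ and define the quasipolynomial $\tilde c(m)=P_s(m)$ for $m\equiv s\pmod{M_n}$. The claim is equivalent to $\tilde c(m)=0$ for $-N<m<0$, since $R_s(m+N/2)=P_s(m)$. Lemma~\ref{lemma-key} gives $\tilde c(m)=c_n(m)$ for all $m\ge0$.

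The first step is the standard partial-fraction/quasipolynomial fact. The rational function $C(q)=\prod_{k=2}^n(1-q^k)^{-1}$ has numerator degree $0$ strictly less than the denominator degree $N$. Hence $c_n(m)$ agrees with a single quasipolynomial $\tilde c$ for all $m\ge0$, and in fact for all $m>-N$. This follows because the partial fraction expansion writes $C(q)=\sum_{\zeta,j}\alpha_{\zeta,j}(1-\zeta^{-1}q)^{-j}$ with no polynomial part. The coefficient of $q^m$ of each term, $\alpha\binom{m+j-1}{j-1}\zeta^{-m}$, is a quasipolynomial in $m$. This already gives the quasipolynomial for all $m\ge0$, matching Lemma~\ref{RS}.

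To extend to negatives, I will use the identity $\sum_{m\in\mathbb Z}\tilde c(m)q^m=0$ as formal rational functions, i.e.\ $\sum_{m\ge0}\tilde c(m)q^m=-\sum_{m<0}\tilde c(m)q^{m}$ (Stanley's reciprocity / Popoviciu). Concretely, I will use the following argument. Define $d(m)=\tilde c(m)-[q^m]C(q)$ for all integers $m$ (with $[q^m]C=0$ for $m<0$). Then $d(m)=0$ for $m\ge0$, and $d$ is a quasipolynomial on negative $m$. Apply the operator $\prod_{k=2}^n(1-E^{k})$, where $E$ is the shift $Ef(m)=f(m-1)$. This operator annihilates every quasipolynomial of the type arising from the partial fractions, since $\prod(1-q^k)C(q)=1$ termwise. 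It therefore annihilates $\tilde c$, while applied to the coefficients of $C$ it produces $\delta_{m,0}$. Hence the operator applied to $d$ gives $-\delta_{m,0}$.

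Now evaluate the relation $\sum_{S\subseteq\{2,\dots,n\}}(-1)^{|S|}d(m-\sigma(S))=-\delta_{m,0}$ at $m<0$ and use downward induction. Equivalently, run the recurrence backwards from the largest shift: the term with $S=\{2,\dots,n\}$ is $\pm d(m-N)$. The other terms involve $d$ at arguments in $(m-N,m]$. Taking $m=-1,-2,\dots$ would compute $d$ at $-1-N,\dots$, which is the wrong direction. So instead I will rewrite the relation with the roles reversed, solving for $d(m)$ in terms of $d(m-\sigma(S))$ with $\sigma(S)>0$. For $m\in(-N,0)$... I expect this bookkeeping to be the main obstacle. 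The cleaner route is through the generating function: the series $\sum_{m<0}d(m)q^{m}$ equals minus the rational function $C(q)$ expanded in powers of $q^{-1}$. Indeed, the quasipolynomial $\tilde c$ satisfies $\sum_{m\ge0}\tilde c(m)q^m + \sum_{m<0}\tilde c(m)q^m=0$ in the sense of rational functions, which is checked term by term on $\binom{m+j-1}{j-1}\zeta^{-m}$. So $\sum_{m<0}\tilde c(m)q^m=-C(q)$ expanded at $q=\infty$. Writing $C(q)=\prod_k (-q^{-k})(1-q^{-k})^{-1}=(-1)^{n-1}q^{-N}\prod_k(1-q^{-k})^{-1}$ shows that this expansion involves only powers $q^{-N-i}$ with $i\ge0$. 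Hence $\tilde c(m)=0$ for $-N<m<0$. This is the key step, and it yields the lemma after translating back via \eqref{pol-sub}.
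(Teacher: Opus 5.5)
Your generating-function argument is correct, but it takes a genuinely different route from the paper.

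\textbf{The paper's route.} The paper stays with the recurrence you abandoned. Write $\prod_{k=2}^n(1-q^k)=\sum_{j=0}^{N}\gamma_jq^j$ with $\gamma_N=(-1)^{n-1}$, and set $H(m)=\sum_{j}\gamma_j\tilde c(m-j)$. On each class $m=s+M_nt$ this is a polynomial in $t$. It vanishes for $m\ge N$, hence it vanishes identically. The paper then evaluates $H$ at $m=N-1,N-2,\dots,1$, rather than at negative $m$. At such $m$, every term except $\gamma_N\tilde c(m-N)$ has nonnegative argument, and those terms sum to $[q^m]1=0$. So $\tilde c(-1),\tilde c(-2),\dots,\tilde c(1-N)$ are forced to vanish one at a time. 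That choice of evaluation points is exactly the fix for the ``wrong direction'' problem you ran into in your bookkeeping step.

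\textbf{Your route.} You instead use the reciprocity $\sum_{m\in\mathbb Z}\tilde c(m)q^m=0$ for quasipolynomials, checked on the terms $\binom{m+j-1}{j-1}\zeta^{-m}$. You then read off the expansion of $-C(q)=(-1)^nq^{-N}\prod_{k=2}^n(1-q^{-k})^{-1}$ at $q=\infty$.

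To apply the term-by-term check to $\tilde c$, you should state explicitly that $\tilde c$, defined via the $P_s$, coincides on all of $\mathbb Z$ with the partial-fraction quasipolynomial. This holds because both are polynomials on each residue class mod $M_n$ and they agree for $m\ge0$. Relatedly, your first paragraph claims agreement ``for all $m>-N$'' but only justifies it for $m\ge0$. The negative range is what the reciprocity step later proves.

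\textbf{Comparison.} The paper's proof is finite and self-contained. It needs only the identity $\prod_{k=2}^n(1-q^k)\,C(q)=1$, the fact that a polynomial with infinitely many zeros vanishes, and triangular back-substitution. Yours requires the partial-fraction and reciprocity machinery, but it yields more: the full reciprocity law $\tilde c(-N-i)=(-1)^n c_n(i)$ for $i\ge0$. This is essentially the R{\o}dseth--Sellers result the paper cites, and the vanishing on $(-N,0)$ is its first piece.
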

	\begin{proof}
		For every integer $m\in \mathbb{Z}$, let 	$0\le s <M_n$ and  $m\equiv s\pmod{M_n}$.  Define
		\[
		\widetilde c_n(m)
		=
		R_s\!\left(m+\frac{n(n+1)-2}{4}\right),
		\]
		where $\widetilde c_n(m)$ is a polynomial in $m$ on $s+M_n\,\mathbb{Z}$ with degree $n-2$.	
		Clearly, $\widetilde c_n(m)=c_n(m)$ for $m\ge 0$.
		Let 
		\[
		d=\frac{n(n+1)-2}{2},\qquad \prod_{k=2}^{n}(1-q^k)=\sum_{j=0}^{d}\gamma_jq^j,
		\qquad
		\gamma_{d}=(-1)^{n-1}.
		\]
		Since
		\[
		\prod_{k=2}^{n}(1-q^k)\sum_{m\ge0}c_n(m)q^m=\prod_{k=2}^{n}(1-q^k)\prod_{k=2}^{n}\frac{1}{1-q^k}=1,
		\]
		taking the coefficient of $q^m$ for $m\ge 1$ on both sides,	we have
		$
		\sum_{j=0}^{d}
		\gamma_j c_n(m-j)=0.
		$
		
		For $m\in \mathbb{Z}$, define 
		$$
		H(m):=\sum_{j=0}^{d}
		\gamma_j \widetilde c_n(m-j),
		$$
	    where $ H(m)$ is a polynomial in $m$ on $s+M_n\,\mathbb{Z}$ with degree no more than $n-2$.	
	    For every $m\ge d,$ since $c_n(m-j)=\widetilde c_n(m-j)$, we have 
	    $$
	    H(m)=\sum_{j=0}^{d}\gamma_j c_n(m-j)=0.
	    $$
For each fixed residue $s$ modulo $M_n$, the function $H(s+M_n t)$
is a polynomial in $t$. Since it vanishes for all sufficiently large
integers $t$, it is identically zero. As this holds for every residue
class modulo $M_n$, we have $H(m)=0$ for every integer $m$.
	    
	    Taking $m=d-1$,
	    we obtain 
	    $$
	    0=H(d-1)=\sum_{j=0}^{d}
	    \gamma_j \widetilde c_n(d-1-j)=\sum_{j=0}^{d-1}
	    \gamma_jc_n(d-1-j)+ \gamma_d \widetilde c_n(-1)=\gamma_d \widetilde c_n(-1).
	    $$
	     Using $\gamma_d\ne0$,  we have $ \widetilde c_n(-1)=0.$
	  	Taking $m=d-2,\ldots,1$ successively yields
		\[
		\widetilde c_n(-1)=\widetilde c_n(-2)=\cdots=\widetilde c_n(1-d)=0.
		\]
		Consequently, for every integer $m$ with
		$-\frac{n(n+1)-2}{2}<m<0$, we have
		\[
		R_s\!\left(m+\frac{n(n+1)-2}{4}\right)=0,
		\]
		where $0\le s<M_n$ and $m\equiv s\pmod{M_n}$. This completes the proof.
	\end{proof}

	For each $j$, set
	\begin{equation*}\label{eq:y}
		y=\frac{j+(n(n+1)-2)/4}{r}.
	\end{equation*}
	Then $r(y-\ell)=j-r\ell+(n(n+1)-2)/4$.
	Let
	\begin{equation}\label{eq:s}
	0\le s_\ell<M_n,
	\qquad
	s_\ell\equiv j-r\ell\pmod{M_n}.
	\end{equation}
	Recall that by~\eqref{B=UC},
	\begin{equation*}
		B_{r,n}(j)
		=\sum_{\ell=0}^{\frac{n(n+1)}2}u_{n,\ell}\,c_n(j-r\ell).
	\end{equation*}
	
	When $\ell <y,$
	we have $j-r\ell>-(n(n+1)-2)/4$.
	If $j-r\ell\ge0$, then by~\eqref{pol-sub},
	\[
	c_n(j-r\ell)
	=
	R_{s_\ell}\left(j-r\ell+\frac{n(n+1)-2}{4}\right)
	=
	R_{s_\ell}\bigl(r(y-\ell)\bigr).
	\]
	If $j-r\ell<0$, then, since $\ell<y$,
	\[
	-\frac{n(n+1)-2}{4}<j-r\ell<0.
	\]
	Hence Lemma~\ref{low:negative-extension} gives
	\[
	R_{s_\ell}\bigl(r(y-\ell)\bigr)
	=
	R_{s_\ell}\left(j-r\ell+\frac{n(n+1)-2}{4}\right)
	=0.
	\]
	Since $c_n(j-r\ell)=0$ by definition, we again obtain
	\[
	c_n(j-r\ell)=R_{s_\ell}\bigl(r(y-\ell)\bigr).
	\]
    If $\ell\ge y$, then
$j-r\ell\le-(n(n+1)-2)/4<0$, and hence
$c_n(j-r\ell)=0$.
	Thus, we obtain 
	\begin{equation}\label{low:coefficient}
		B_{r,n}(j)=
		\sum_{\substack{0\le \ell\le \frac{n(n+1)}2\\\ell<y}}
		u_{n,\ell}R_{s_\ell}\bigl(r(y-\ell)\bigr) \qquad \textrm{for }\, y=\frac{j+(n(n+1)-2)/4}{r}.
	\end{equation}
	This expression of $B_{r,n}(j)$ is the desired polynomial representation,
    and we will use~\eqref{low:coefficient} to give a lower bound of $B_{r,n}(j)$.

	\subsubsection{Rational lower bounds and finite verification for $B_{r,n}(j)$ for $5\le n\le 11$}
	
	The following proposition supplies the coefficient
	inequalities needed to prove Conjecture~\ref{con:main} for $5\le n\le 11$.
	For even $r$, it treats the range arising in the induction on $n$;
	for odd $r$ and $n=11$, it treats the range $9r\le j\le 33(r-1)=\mu_{r,11}.$

\begin{proposition}\label{sc-B}
	The following statements hold.
	\begin{enumerate}
		\item[\rm(i)]
		For every even integer $r\ge22$ and every integer $5\le n\le11$,
		we have
		$
		B_{r,n}(j)\ge0
		$
		for
		\[
		\left\lfloor\frac{(r-1)n(n-1)}4\right\rfloor
		\le j\le
		\left\lfloor\frac{(r-1)n(n+1)}4\right\rfloor.
		\]
		\item[\rm(ii)]
		For every odd integer $r\ge21$, we have
		$
		B_{r,11}(j)\ge0
		$
		for
		\[
		9r\le j\le33(r-1).
		\]
	\end{enumerate}
\end{proposition}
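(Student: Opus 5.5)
The plan is to use the exact representation \eqref{low:coefficient}. We parametrize the range of $j$ by the rescaled variable $y=(j+(n(n+1)-2)/4)/r$, and write $B_{r,n}(j)$ as a finite sum of polynomials in $r$ whose coefficients depend only on $y$ and the residues $s_\ell$.

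For the range in (i), $y$ lies in a bounded interval. Up to $O(1/r)$ corrections, this interval is
\[
\Bigl[\tfrac{n(n-1)}{4},\tfrac{n(n+1)}{4}\Bigr].
\]
In (ii) the interval is approximately $[9,33]$. Only the indices $\ell<y$ contribute, so at most $\lfloor n(n+1)/4\rfloor+1$ terms appear.

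Step 1: the top-degree part. Each $R_{s}$ has degree $n-2$ with the common leading coefficient $1/(n!(n-2)!)$. We therefore expand
\[
R_s(ry')=\sum_{i=0}^{n-2} a_{s,i}\,(ry')^i ,\qquad y'=y-\ell .
\]
The coefficient of $r^{n-2}$ in $B_{r,n}(j)$ is
\[
\frac{1}{n!(n-2)!}\sum_{\ell<y}u_{n,\ell}(y-\ell)^{n-2}=:\frac{\Phi_n(y)}{n!(n-2)!},
\]
which is independent of the residues. Here $\Phi_n$ is a truncated-power (B-spline-type) function; up to normalization it is the density of $\sum_{k=1}^n kU_k$ with $U_k$ uniform on $[0,1]$, differentiated once. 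Since that density is a symmetric, log-concave function centred at $n(n+1)/4$, its derivative $\Phi_n$ is positive on the open left half $(0,n(n+1)/4)$ and vanishes at the centre. This positivity can be confirmed directly for each $n\le 11$ by exact piecewise-polynomial computation.

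Step 2: lower-order terms. The coefficients $a_{s,i}$ for $i<n-2$ depend on $s$, but they take only finitely many values, computable from the interpolation data in Lemma~\ref{lemma-key}. I would bound them via $|a_{s,i}|\le \alpha_i$, using the exact finite tables of $P_s$. This gives
\[
B_{r,n}(j)\ \ge\ \frac{\Phi_n(y)}{n!(n-2)!}r^{n-2}-\sum_{i<n-2}\beta_i(y)\,r^{i},
\]
where each $\beta_i$ is an explicit piecewise polynomial. This is the "rational lower bound": for $y$ away from the centre, a computation shows the right side is positive for all $r\ge 21$ (or $22$). The remaining finitely many pairs $(r,j)$ below any threshold are checked by direct computation of $B_{r,n}(j)$.

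Step 3, the main obstacle: the centre. At $y=n(n+1)/4$ the main term vanishes, so Step 2 fails near the upper endpoint $j=\lfloor\mu_{r,n}\rfloor$. I would try the antisymmetry $B_{r,n}(j)=-B_{r,n}(D_{r,n}+1-j)$ from Proposition~\ref{fd:induction} to pair terms. Writing $j=\mu_{r,n}-t$, the polynomial in $r$ should have its leading behaviour governed by $t\,r^{n-3}\Phi_n'(\text{centre})$ plus a constant-in-$t$ correction coming from the $+1$ shift. Here $\Phi_n'<0$ at the centre, giving growth in $t$. So I would treat $t$ in a window $0\le t\le Cr$ as a separate regime: there I expand around the centre to second order and obtain a lower bound linear in $t$ plus a nonnegative boundary term. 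This requires the exact subleading coefficients, which is where I expect the real work and the finite residue-by-residue verification (over $s$ modulo $M_n$, a large but finite check, done by computer as in the appendices) to lie.

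For (ii), the lower endpoint $9r$ is well inside the left half, and odd $r$ only changes which residues occur, so the same scheme applies with $n=11$.
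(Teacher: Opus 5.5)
Your architecture is the paper's: the exact representation \eqref{low:coefficient}, the residue-independent leading polynomial $L(y)=\Phi_n(y)/(n!(n-2)!)$, crude bounds for the residue-dependent lower-order terms, and a separate treatment of the centre. Your reading of $L$ as the derivative of the symmetric log-concave density of $\sum_k kU_k$ is a nice conceptual reason for $L>0$ on the left half and $L(n(n+1)/4)=0$; the paper only checks the latter by exact computation. The gap is Step~3, the delicate part. As written it is a heuristic, not an argument.

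First, a second-order expansion about the centre is not uniform on a window $0\le t\le Cr$, because there $t/r$ is of order one. You would have to control the Taylor remainder and then match the window with Step~2, whose threshold in $r$ deteriorates as $C\to0$. The paper avoids this entirely. Since $L(n(n+1)/4)=0$, it factors $L(y)=\bigl(n(n+1)/4-y\bigr)\widehat L(y)$ exactly and bounds $\widehat L\ge\widehat\lambda$ on the whole final subinterval $[a,n(n+1)/4]$. It then uses $r\bigl(n(n+1)/4-y\bigr)=\mu_{r,n}-j+\tfrac12\ge\tfrac12$, which gives $r^{n-2}L(y)\ge\tfrac12\widehat\lambda\,r^{n-3}$ uniformly there, with no expansion.

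Second, your ``nonnegative boundary term'' is asserted, not proved. When $t=O(1)$ the main term is only of order $r^{n-3}$, and you give no reason why residue-dependent contributions of that same order cannot cancel it. The proposed antisymmetry pairing is never developed into such a reason. What must be established is the explicit inequality $\widehat\lambda/2>\sum_{k=0}^{n-3}(d_k+\eta_k)21^{k+3-n}$, which the paper verifies in exact arithmetic.

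It holds with room for a structural reason you could use. The shift $(n(n+1)-2)/4=\tfrac12\sum_{k=2}^{n}k$ is the reciprocity centre of $c_n$. So the contribution of the pole at $q=1$, written as a polynomial in $x=m+(n(n+1)-2)/4$, contains only $x^{n-2},x^{n-4},\dots$. The poles at other roots of unity contribute degree at most $\lfloor n/2\rfloor-1\le n-4$ for $n\ge5$. Hence $[x^{n-3}]R_s=0$ for every $s$, no $r^{n-3}$ error term exists, and the first error is $O(r^{n-4})$. This is exactly what your heuristic ``$(t+\tfrac12)|\Phi_n'|\,r^{n-3}$'' silently relies on.

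A lesser difference is in Step~2. You bound every coefficient $|a_{s,i}|$ by one constant and sum $|u_{n,\ell}|$, which discards the cancellation in the alternating sums over $\ell$. Your threshold in $r$ may then be large, and the fallback of computing $B_{r,n}(j)$ directly below it has unknown cost. The paper keeps the parity averages inside the signed polynomials $T_k^{j,r}$ and bounds only the deviations $\widetilde R_s$ in absolute value. That is sharp enough that one check at $r=21$ covers all $r\ge21$ by monotonicity in $r$, with no fallback computation.
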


\begin{proof}
	We prove both parts using the exact representation of $B_{r,n}(j)$ in~\eqref{low:coefficient}.
	Recall that
	$$y=\frac{j+(n(n+1)-2)/4}{r}.$$
	A direct calculation shows that  
	\begin{equation}\label{range:y}
		y\in \left[\frac{n(n-1)}4, \frac{n(n+1)}4\right]
	\end{equation}
	for case (i), and  $y\in [9,33]$ for case (ii).
	Both intervals are independent of $r$.
	
	\medskip
	\noindent
	\textbf{Reduce the intervals in~\eqref{range:y} and the interval $[9,33]$ to fixed subintervals.}
	We split each interval at all its interior integers.
	For example, when $r$ is even and $n=7$, the interval in~\eqref{range:y} is $[\frac{n(n-1)}4,\frac{n(n+1)}4]=[21/2,14]$.
	We reduce it into four subintervals
	$
	[21/2,11], [11,12], [12,13], [13,14].
	$
	The odd case uses the 24 subintervals
	$[9,10],[10,11],\ldots,[32,33]$.
	
	For each subinterval $I=[a,b]$, put
	\[
	\mathcal J_I=
	\left\{\ell\in\mathbb Z_{\ge0}:\ell<\frac{a+b}{2}\right\}.
	\]
	If $y\in I$ and $y\ne a$ (i.e., $y$ is not the left endpoint of $I$), then
	we have $\{\ell\in \mathbb{Z}_{\ge0}:\ell<y\}=\mathcal J_I.$
	This still holds when $y=a$ and $a$ is not an integer.
	
	If $y=a\in\mathbb{Z}$ corresponds to an integer coefficient index $j$, then $\mathcal J_I$ contains one additional index, $\ell=a$.
	By the expression of $y$ in~\eqref{low:coefficient}, $y=a$ implies that
	\[
	j-ra=-\frac{n(n+1)-2}{4}.
	\]
    Recall that~\eqref{eq:s} shows that $s_\ell\equiv j-r\ell\pmod{M_n}$. This
    gives $$s_a+\frac{n(n+1)-2}{4}\equiv 0\pmod{M_n}.$$
	Thus by Lemma~\ref{low:negative-extension}
	$R_{s_a}(0)=0.$
	Consequently, \eqref{low:coefficient} becomes
	\begin{equation}\label{low:fixed-sum}
		B_{r,n}(j)=\sum_{\ell\in\mathcal J_I}
		u_{n,\ell}R_{s_\ell}\bigl(r(y-\ell)\bigr),
		\qquad y\in I,
	\end{equation}
	where $0\le s_\ell<M_n$ and $s_\ell\equiv j-r\ell\pmod{M_n}$.
	Clearly, every index in $\mathcal J_I$ is less than $n(n+1)/2$ since the right endpoint $b\le n(n+1)/4$.
	
	\medskip
	\noindent
	\textbf{Decomposition of $R_{s_\ell}\bigl(r(y-\ell)\bigr)$ in~\eqref{low:fixed-sum}.}
	We separate the common leading term of the polynomials $R_s(x)$
from their parity-dependent average terms and the deviations
from these averages.
Substituting $x=r(y-\ell)$ and summing over $\ell\in\mathcal J_I$
then gives the decomposition of $B_{r,n}(j)$ used below.

Let
	\begin{align}
		A_e(x)&=\frac2{M_n}\left(R_0(x)+R_2(x)+\cdots+R_{M_n-2}(x)\right)
		=\sum_{k=0}^{n-2}(\alpha_k+\beta_k)x^k;\label{Ae}\\
		A_o(x)&=\frac2{M_n}\left(R_1(x)+R_3(x)+\cdots+R_{M_n-1}(x)\right)
		=\sum_{k=0}^{n-2}(\alpha_k-\beta_k)x^k.\label{Ao}
	\end{align}
	Thus $A_e$ is the average polynomial over even residues and $A_o$ the average polynomial
	over odd residues. 
	Define 
	$$
	\widetilde R_s(x)=\begin{cases}
		R_s(x)-A_{e}(x), \qquad& s \,\,\textrm{is even};\\
		R_s(x)-A_{o}(x), \qquad& s \,\,\textrm{is odd}.
	\end{cases}
	$$
	and
	\begin{equation}\label{low:residue-remainder}
		\rho_k=\max_{0\le s<M_n} \left|[x^k]\widetilde R_s(x)\right|,
		\qquad 0\le k\le n-2.
	\end{equation}
	The polynomial $\widetilde R_s(x)$ measures the deviation of $R_s(x)$ from the average polynomial
	and $\rho_k$ is the maximum absolute value of the coefficient of $x^k$ in $\widetilde R_s(x)$.
	The common leading coefficient gives
	$\alpha_{n-2}=1/(n!(n-2)!)$,  $\beta_{n-2}=0$ and
	$\rho_{n-2}=0$. Consequently, $\deg \widetilde R_s(x)\le n-3$ and
	\begin{equation}\label{low:remainder-bound}
		\left|\widetilde R_s(x)\right|\le\sum_{k=0}^{n-3}\rho_k x^k,
		\qquad x\ge0.
	\end{equation}
	Hence, 
	\begin{align*}
		R_s(x)=\frac{1}{n!(n-2)!}x^{n-2}+\sum_{k=0}^{n-3}(\alpha_k+(-1)^{s}\beta_k)x^k+\widetilde R_{s}(x),
	\end{align*}
	and by~\eqref{low:fixed-sum}, $B_{r,n}(j)$ equals
	\begin{align}\label{exp-B}
		\sum_{\ell\in\mathcal J_I} u_{n,\ell}
		\left(\frac{1}{n!(n-2)!}(r(y-\ell))^{n-2}+\sum_{k=0}^{n-3}(\alpha_k+(-1)^{s_\ell}\beta_k)(r(y-\ell))^k+\widetilde R_{s_\ell}(r(y-\ell))\right).
	\end{align}
	
	By~\eqref{eq:s}, $s_\ell\equiv j-r\ell \pmod{M_n}$.
	Since $M_n$ is even,
	we have $s_\ell\equiv j-r\ell \pmod{2}$.
	On the fixed interval $I=[a,b]$, 
	corresponding to the three terms on the right-hand side of~\eqref{exp-B},
	define
	\begin{equation*}
		\begin{aligned}
			L(y)&=\frac1{n!(n-2)!}
			\sum_{\ell\in\mathcal J_I}u_{n,\ell}(y-\ell)^{n-2},\\
			T^{j,r}_{k}(y)&=\sum_{\ell\in\mathcal J_I}u_{n,\ell}
			\bigl(\alpha_k+(-1)^{j-r\ell}\beta_k\bigr)(y-\ell)^k,\\
			\eta_k&=\rho_k\sum_{\ell\in\mathcal J_I}
			|u_{n,\ell}|(b-\ell)^k
			\qquad(0\le k\le n-3).
		\end{aligned}
	\end{equation*}
	Substitution in \eqref{exp-B} gives
	\begin{equation}\label{low:main-error}
		B_{r,n}(j)=r^{n-2}L(y)+\sum_{k=0}^{n-3}r^kT^{j,r}_{k}(y)+\sum_{\ell\in\mathcal J_I}
		u_{n,\ell}\widetilde R_{s_\ell}\bigl(r(y-\ell)\bigr),
	\end{equation}
	
	\medskip
	\noindent
	\textbf{Obtain rational bounds on each interval by~\eqref{low:main-error}.} 
    To prove that $B_{r,n}(j)\ge0$ for the indices corresponding to
each subinterval $I$, we estimate the three parts of
\eqref{low:main-error} in three steps.
In Step~1, we obtain a lower bound for the leading term
$r^{n-2}L(y)$.
In Step~2, we obtain a lower bound for the lower-degree terms
$\sum_{k=0}^{n-3}r^kT_k^{j,r}(y)$.
In Step~3, we obtain an upper bound for the absolute value of
the remainder,
\[
\left|
\sum_{\ell\in\mathcal J_I}
u_{n,\ell}\widetilde R_{s_\ell}\bigl(r(y-\ell)\bigr)
\right|.
\]
Combining these estimates gives a lower bound for $B_{r,n}(j)$,
which we then verify to be nonnegative.

	
	Let $f\in\mathbb{Q}[x]$ and  $I=[a,b]$. 
	Set $$x=\frac{a+b}{2}+\frac{b-a}{2}z.$$
	Then $z\in[-1,1].$
	Suppose that 
	$$
	f(x)=f\left(\frac{a+b}{2}+\frac{b-a}{2}z\right)=c_0+c_1z+\ldots+c_{d_1} z^{d_1}.
	$$
	Define $E_{I,f}:= c_0-\sum_{k=1}^{d_1} |c_k|.$
	Since $c_k z^k\ge -|c_k|$ for all $k\ge 1$,
	we have $f(x)\ge E_{I,f}.$
	This operation is repeatedly used in the following estimates.

	\emph{Step 1. A lower bound for the leading term $r^{n-2}L(y)$.} 
	We divide the intervals $[a,b]$ into two cases: $b\ne n(n+1)/4$ and $b=n(n+1)/4$.
	If $b\ne n(n+1)/4$,
	let $\lambda=E_{[a,b],L}$. We have  $r^{n-2}L(y)\ge r^{n-2}\lambda.$

	If $b=n(n+1)/4$, 
	for $5\le n\le 11$, 
	the identity $$
	L\left(\frac{n(n+1)}{4}\right)=0
	$$
	is verified by exact computation as part of the finite checks
	in~\ref{app:center-zero}.
    (In fact, the identity \(L(b)=0\), where \(b=n(n+1)/4\), holds for every integer \(n\ge3\) and admits a short proof using coefficient symmetry and finite differences. To keep the presentation focused on the main argument, we omit the general proof.)
	Hence there exists a polynomial
	$\widehat L\in\mathbb Q[y]$ such that
	$L(y)=\left(n(n+1)/4-y\right)\widehat L(y).$
	Set
	\[
	\widehat\lambda:=E_{[a,n(n+1)/4],\widehat L}.
	\]
	Since $n(n+1)/4-y\ge0$ on $I$, we obtain
	\[
	r^{n-2}L(y)
	=
	r^{n-2}(n(n+1)/4-y)\widehat L(y)
	\ge
	r^{n-2}(n(n+1)/4-y)\widehat\lambda,
	\qquad y\in I.
	\]

	\emph{Step 2. A lower bound for the lower-degree terms $\sum_{k=0}^{n-3}r^kT^{j,r}_{k}(y)$.} 
	We bound the possible negative contribution of
	\[
	\sum_{k=0}^{n-3} r^k T_k^{j,r}(y).
	\]
	Fix the required parity $r'\in\{0,1\}$ of $r$ and one
	parity $j'\in\{0,1\}$ of $j$. Since
	\[
	(-1)^{j-r\ell}=(-1)^{j'-r'\ell},
	\]
	we have $T_k^{j,r}(x)=T_k^{j',r'}(x)$.
	
	For each $0\le k\le n-3$, write
	\[
	T_k^{j',r'}\left(
	\frac{a+b}{2}+\frac{b-a}{2}z
	\right)
	=
	\sum_{h=0}^{k} t_{k,h}z^h.
	\]
	Every $y\in[a,b]$ corresponds to a value $z\in[-1,1]$.
	Therefore
	\[
	T_k^{j',r'}(y)
	\ge
	t_{k,0}-\sum_{h=1}^{k}|t_{k,h}|.
	\]
	Define
	\[
	d_k=
	\max\left\{
	0,\,
	\sum_{h=1}^{k}|t_{k,h}|-t_{k,0}
	\right\},
	\]
	where the sum is zero when $k=0$.
	Then $d_k\ge0$ and $T_k^{j',r'}(y)\ge-d_k$ throughout
	$[a,b]$. Multiplying by $r^k$ and summing, we obtain
	\[
	\sum_{k=0}^{n-3}r^kT_k^{j,r}(y)
	\ge
	-\sum_{k=0}^{n-3}d_kr^k.
	\]
	
	These bounds are computed separately for each required
	pair of parities. We use $r'=0$ for $5\le n\le11$,
	and $r'=1$ only for $n=11$. In each case, both
	$j'=0$ and $j'=1$ are checked.
	
	\emph{Step 3. An absolute-value bound for the remainder sum  $\sum_{\ell\in\mathcal J_I}
		u_{n,\ell}\widetilde R_{s_\ell}\bigl(r(y-\ell)\bigr)$.} 
	For every included index, $0\le y-\ell\le b-\ell$. Therefore
	\eqref{low:remainder-bound} implies
	\[
	\left|\sum_{\ell\in\mathcal J_I}
	u_{n,\ell}\widetilde R_{s_\ell}\bigl(r(y-\ell)\bigr)\right|\le
	\sum_{\ell\in\mathcal J_I}|u_{n,\ell}|
	\sum_{k=0}^{n-3}\rho_k r^k(y-\ell)^k
	\le\sum_{k=0}^{n-3}\eta_k r^k.\]

	\medskip
	\noindent\textit{Finite verification.}
	Fix a subinterval $I=[a,b]$.
	If $b\ne n(n+1)/4$,
	combining the preceding estimates with~\eqref{low:main-error} gives
	\[
	B_{r,n}(j)
	\ge
	r^{n-2}\left(
	\lambda-\sum_{k=0}^{n-3}(d_k+\eta_k)r^{k+2-n}
	\right).
	\]
	Since $r\ge21$, $d_k+\eta_k\ge0$, and $k+2-n<0$,
	it suffices to verify
	\[
	\lambda-\sum_{k=0}^{n-3}
	(d_k+\eta_k)21^{k+2-n}\ge0.
	\]
	
	For the last subinterval $I=[a,n(n+1)/4]$, we instead use
	$L(y)\ge(n(n+1)/4-y)\widehat\lambda$.
	By the definition of $y$ and the bound
	$j\le\lfloor\mu_{r,n}\rfloor$, we have
	\begin{align*}
		r\left(\frac{n(n+1)}{4}-y\right)&=r\left(\frac{n(n+1)}{4}-\frac{j+(n(n+1)-2)/4}{r}\right)\\
		&=
		\frac{(r-1)n(n+1)}4-j+\frac12
		=
		\mu_{r,n}-j+\frac12
		\ge\frac12.
	\end{align*}
	Thus it suffices to verify
	\[
	\frac{\widehat\lambda}{2}
	-
	\sum_{k=0}^{n-3}
	(d_k+\eta_k)21^{k+3-n}
	\ge0.
	\]
	Indeed, this inequality implies $\widehat\lambda\ge0$, and hence
	\begin{align*}
		B_{r,n}(j)
		&\ge
		r^{n-2}(n(n+1)/4-y)\widehat\lambda
		-
		\sum_{k=0}^{n-3}(d_k+\eta_k)r^k\\
		&\ge
		r^{n-3}\left(
		\frac{\widehat\lambda}{2}
		-
		\sum_{k=0}^{n-3}(d_k+\eta_k)r^{k+3-n}
		\right)\\
		&\ge
		r^{n-3}\left(
		\frac{\widehat\lambda}{2}
		-
		\sum_{k=0}^{n-3}(d_k+\eta_k)21^{k+3-n}
		\right)
		\ge0,
	\end{align*}
	where the last estimate uses $k+3-n\le0$.
	
	All quantities in the two verification inequalities are rational
	and depend only on $n$, the subinterval $I$, and the chosen
	parities, not on $r$ or $j$ themselves.
	~\ref{app:low-code} gives a Maple implementation that
	evaluates these two expressions directly, using $d_k$ as defined above.
	The even cases have $3,4,4,4,5,6,6$ subintervals for
	$n=5,6,7,8,9,10,11$, respectively, and the odd case has $24$.
	Checking both parities of $j$ on these $56$ subintervals gives
	$112$ inequalities. Every computed value exceeds $1/100000$,
	using exact rational arithmetic.
	This completes the proof.
\end{proof}

	\begin{theorem}\label{low:seeds}
		Conjecture~\ref{con:main} holds for $r\ge 21$ and $n\le 11$.
	\end{theorem}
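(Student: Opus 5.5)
Theorem~\ref{low:seeds} follows by assembling the results of this section; there is no new estimate to prove. For odd $r\ge21$ the conjecture only asserts something at $n=11$ (within $n\le 11$). For even $r\ge22$ it asserts unimodality for every $1\le n\le 11$, and we argue by induction on $n$ using Theorem~\ref{checkinginterval}.

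\textbf{Even $r\ge 22$.} The base cases $n=1,2$ are immediate: $F_{r,1}(q)=1+q+\cdots+q^{r-1}$ is unimodal, and $n=2$ is Theorem~\ref{thm:finite}(ii). The cases $n=3$ and $n=4$ are Propositions~\ref{sc:three} and~\ref{sc:small}. For $5\le n\le 11$, assume $F_{r,n-1}(q)$ is unimodal. By Theorem~\ref{checkinginterval} it suffices to show $B_{r,n}(j)\ge0$ for $\lfloor\mu_{r,n-1}\rfloor<j\le\lfloor\mu_{r,n}\rfloor$. Since $\mu_{r,n-1}=(r-1)n(n-1)/4$, this range lies inside the range treated in Proposition~\ref{sc-B}(i), so that proposition gives the required nonnegativity. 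Induction from $n=4$ up to $n=11$ then finishes the even case.

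\textbf{Odd $r\ge 21$, $n=11$.} Here we cannot induct from $n=10$, since the lower cases need not be unimodal. Instead we use Proposition~\ref{fd:window} directly: we need $B_{r,11}(j)\ge0$ for $1\le j\le\lfloor\mu_{r,11}\rfloor=33(r-1)$, where $\mu_{r,11}$ is an integer because $r-1$ is even.
\begin{itemize}
\item Lemma~\ref{fd:initial} covers $1\le j<2r$.
\item Proposition~\ref{sc-B}(ii) covers $9r\le j\le 33(r-1)$.
\end{itemize}
This leaves the middle range $2r\le j<9r$, which is the main obstacle since none of the stated results covers it yet.

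For this gap I would reuse the machinery of Proposition~\ref{sc-B} with $n=11$ and odd $r$. Put $y=(j+65/2)/r$, so that $j\in[2r,9r)$ corresponds to $y$ in roughly $[2,9+\epsilon)$, a range independent of $r$. I would split this into the unit subintervals $[2,3],\dots,[8,9]$ and verify, for each subinterval and each parity of $j$, the rational inequality $\lambda-\sum_k(d_k+\eta_k)21^{k-9}\ge0$.

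A possible difficulty is that $L(y)$ is small for small $y$, because only a few terms $\ell<y$ are present. The saving point is that with $\ell<y\le 9$, only $\ell\in\{0,1,2,\dots,8\}$ contribute. Moreover $u_{11,\ell}$ for small $\ell$ follows Euler's pentagonal pattern, namely $1,-1,-1,0,0,1,0,1,0,\dots$, so $L$ is an explicit positive-looking combination. If the interval $[2,3]$ fails numerically, I would fall back on an explicit combinatorial argument there, in the spirit of Lemma~\ref{fd:initial}: treat $(1-q^r)/\prod_{k\ge2}(1-q^k)$, with corrections from $q^{2r}$ and $q^{3r}$ that are dominated by $c_{11}(j)$.

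Alternatively, I would check whether the paper's odd-case range in Proposition~\ref{sc-B}(ii) is intended to be combined with an argument, such as Almkvist's known methods, for $j<9r$. In the plan, though, I commit to extending the finite verification down to $y=2$.
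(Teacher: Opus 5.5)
\textbf{Even $r$ and the odd-case reduction.} Your argument for even $r$ is the paper's: the base cases $n\le4$, then Theorem~\ref{checkinginterval} combined with Proposition~\ref{sc-B}(i). Your reduction of the odd case to $B_{r,11}(j)\ge0$ for $1\le j<9r$ is also correct.

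\textbf{The gap.} The range $2r\le j<9r$ is never proved, and the route you commit to fails as stated. For any $f$ and $I=[a,b]$ you have $E_{I,f}=c_0-\sum_{k\ge1}|c_k|\le 2f\bigl(\tfrac{a+b}{2}\bigr)-f(b)$. So $\lambda<0$ whenever $L$ at the midpoint is below half its value at $b$.

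On $[2,3]$, where $11!\,9!\,L(y)=y^9-(y-1)^9-(y-2)^9$, the values are about $3776$ at $y=5/2$ and $19170$ at $y=3$. The same failure occurs on $[3,4]$, $[4,5]$ and $[5,6]$, with values $74962$ vs.\ $241949$, $674050$ vs.\ $1671298$, and $3769870$ vs.\ $7862428$. So four of your seven certificates fail before any lower-order term is considered, not just $[2,3]$.

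Finer subdivision is not a safe repair either. The average polynomial is $\frac{1}{11!\,9!}\bigl(x^9-1515x^7+\cdots\bigr)$, where $1515=3\sum_{a=2}^{11}a^2$. Near $y=2$, the loss $d_7\,21^{-2}$ alone is about $436/(11!\,9!)$, against $L(2)=511/(11!\,9!)$. Meanwhile the certificate discards the compensating positive $x^5$ contribution, since $d_k$ only records negative parts. Your fallbacks ("corrections dominated by $c_{11}(j)$", Almkvist's methods) are not arguments.

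\textbf{The missing idea.} No estimate is needed below $9r$. Write $(1-q)F_{r,11}(q)=K_r(q)\prod_{k=9}^{11}(1-q^{rk})$ with $K_r(q)=\prod_{k=1}^{8}(1-q^{rk})\big/\prod_{k=2}^{11}(1-q^k)$. The last three factors do not affect degrees below $9r$, so $B_{r,11}(j)=[q^j]K_r(q)$ for $0\le j<9r$.

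Then $K_r$ has nonnegative coefficients:
\begin{enumerate}
\item Pair $1-q^{rk}$ with $1-q^k$ for $3\le k\le 8$.
\item Write $r=5s+ht$ with $h\in\{9,11\}$, $t\in\{0,1,2\}$, $s\ge0$; this is possible since $0,11,22,18,9$ cover all residues mod $5$. Put $\ell=20-h$.
\item Split $1-q^r=(1-q^{r-\ell})+q^{r-\ell}(1-q^\ell)$ over $(1-q^2)(1-q^\ell)$, using that $r-\ell$ is even.
\item Split $1-q^{2r}=(1-q^{10s})+q^{10s}(1-q^{2ht})$ over $(1-q^{10})(1-q^h)$.
\end{enumerate}
Every piece is then a polynomial with nonnegative coefficients times a geometric series. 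This handles all $0\le j<9r$ at once, so Lemma~\ref{fd:initial} is not even needed here, and together with Proposition~\ref{sc-B}(ii) it finishes the odd case.
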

	\begin{proof}
		For even $r$, the case \(n=1\) is trivial.
		For \(r\ge2\), by Theorem~\ref{thm:finite}, \(F_{r,2}(q)\) is
		unimodal if \(r\) is even.
		Propositions~\ref{sc:three} and~\ref{sc:small} give the result for $n=3,4$.
		For $n=5,\ldots,11$, the unimodality of $F_{r,n}(q)$ follows inductively from Theorem~\ref{checkinginterval} and~Proposition~\ref{sc-B} (i).

	Next, for every odd integer $r\ge21$, we claim that 
  	\begin{equation}\label{eq:Brn-9r}
    B_{r,11}(j)\ge0 \quad\text{ for $0\le j<9r$}.
    \end{equation}
  Proposition~\ref{sc-B}(ii) gives $B_{r,11}(j)\ge0 $ for
	$9r\le j\le33(r-1)=\mu_{r,11}$.
 By Proposition~\ref{fd:window}, we obtain that $F_{r,11}(q)$ is unimodal.

Now we prove~\eqref{eq:Brn-9r}.
Let
  	\[
  	K_r(q)=\frac{\prod_{k=1}^{8}(1-q^{rk})}
  	{\prod_{k=2}^{11}(1-q^k)}.
  	\]
  	Since
  	\[
  	(1-q)F_{r,11}(q)=K_r(q)\prod_{k=9}^{11}(1-q^{rk}),
  	\]
  	the factors in the last product do not change coefficients of degrees
  	less than $9r$. Thus $$B_{r,11}(j)=[q^j](1-q)F_{r,11}(q)=[q^j]K_r(q),\qquad 0\le j<9r.$$ 
  	It suffices to show that $K_r(q)$ has nonnegative coefficients.
  	
  	Choose $h\in\{9,11\}$ and integers $s,t\ge0$ such that
  	$r=5s+ht$ and $t\in\{0,1,2\}$.
  	Indeed, the numbers $0,11,22,18,9$ represent all residue classes modulo $5$.
  	Each has the form $ht$ with these restrictions.
  	Let $\ell=20-h$.
  	Then $\{h,\ell\}=\{9,11\}.$
  	
  	Rearranging the denominator factors of $K_r(q)$ gives
  	\begin{align*}
  		K_r(q)
  		&=	\frac{1-q^r}{(1-q^2)(1-q^\ell)}
  		\frac{1-q^{2r}}{(1-q^{10})(1-q^h)}	\prod_{k=3}^{8}\frac{1-q^{rk}}{1-q^k}\\
  		=&\frac{1-q^r}{(1-q^2)(1-q^\ell)}
  		\frac{1-q^{2r}}{(1-q^{10})(1-q^h)}
  		\prod_{k=3}^{8} (1+q^k+q^{2k}+\cdots+q^{(r-1)k}).
  	\end{align*}

  	Since $r$ and $\ell$ are odd and $r\ge21>\ell$, the number
  	$(r-\ell)/2$ is a nonnegative integer. 
  	Using $2r=10s+2ht$,
  	we obtain
  	\begin{align*}
  	\frac{1-q^r}{(1-q^2)(1-q^\ell)}=\frac{1-q^{r-\ell}+q^{r-\ell}(1-q^\ell)}{(1-q^2)(1-q^\ell)}
  		=\frac{1+q^2+q^4+\ldots+q^{r-\ell-2}}{1-q^\ell}
  		+\frac{q^{r-\ell}}{1-q^2}
    \end{align*}
    and
  	\begin{align*}\frac{1-q^{2r}}{(1-q^{10})(1-q^h)}&=\frac{1-q^{10s}+q^{10s}(1-q^{2ht})}{(1-q^{10})(1-q^h)}\\
  		&=\begin{cases}
  			\dfrac{1+q^{10}+\cdots+q^{10(s-1)}}{1-q^h}
  			+\dfrac{q^{10s}(1+q^h+\cdots+q^{(2t-1)h})}{1-q^{10}},\qquad &t\ne 0\\
  				\dfrac{1+q^{10}+\cdots+q^{10(s-1)}}{1-q^h},\qquad &t=0
  		\end{cases}.
  	\end{align*}
  	Each power series
  	$(1-q^k)^{-1}=\sum_{m\ge0}q^{km}$ has nonnegative coefficients.
    Hence, $K_r(q)$ has nonnegative coefficients. This proves \eqref{eq:Brn-9r} and completes the proof.
	\end{proof}


\section{Proof of Conjecture~\ref{con:main} for $n\ge 12$}	
In this section, we first derive a Fourier integral representation of $B_{r,n}(j)$
and split the integral into a main term and a tail.
We then estimate these two parts separately and show that the
main term exceeds the absolute value of the tail.
Together with the initial cases established in Section~3,
these estimates complete the induction on $n$.
Throughout this section, $r\ge 21$ and $n\ge 12$.

\subsection{Fourier representation of $B_{r,n}(j)$}
Recall that
	\[
	F_{r,n}(q)
	=
	\prod_{k=1}^{n}
	\left(1+q^k+\cdots+q^{(r-1)k}\right)
	=
	\sum_{j=0}^{D_{r,n}} A_{r,n}(j)q^j,
	\]
	where
	$
	D_{r,n}={(r-1)n(n+1)}/{2},
	$
	and $B_{r,n}(j)=A_{r,n}(j)-A_{r,n}(j-1).$
	We derive a Fourier integral representation of $B_{r,n}(j)$. 
	First, we evaluate $F_{r,n}(q)$
	on the unit circle.
    Note that
	$$
	F_{r,n}(q)=\prod_{k=1}^{n} q^{\frac{(r-1)k}{2}} \prod_{k=1}^{n}
	\left(q^{-\frac{(r-1)k}{2}}+q^{-\frac{(r-3)k}{2}}+\cdots+q^{\frac{(r-1)k}{2}}\right).
	$$
	Recall that 
	$\mu_{r,n}=D_{r,n}/2={(r-1)n(n+1)}/{4}.$
	For real $\theta$, substituting $q=e^{i\theta}$ gives
	\begin{align*}
		F_{r,n}(e^{i\theta})
		&=
		\exp\left(
		\frac{i(r-1)\theta}{2}\sum_{k=1}^{n}k
		\right)
		\prod_{k=1}^{n}
		\left(
		\sum_{\ell=0}^{r-1}
		e^{i(\ell-(r-1)/2)k\theta}
		\right)\\
		&=e^{i \mu_{r,n}\theta}
		\prod_{k=1}^{n}
		\left(
		\sum_{\ell=0}^{r-1}
		e^{i(\ell-(r-1)/2)k\theta}
		\right)\\
		&=
		r^n e^{i \mu_{r,n}\theta}
		\prod_{k=1}^{n}
		\left(
		\frac{1}{r}\sum_{\ell=0}^{r-1}
		e^{i(\ell-(r-1)/2)k\theta}
		\right).
	\end{align*}
	Removing the phase factor $e^{i\mu_{r,n}\theta}$ and
	normalizing by $F_{r,n}(1)=r^n$, we define
	\begin{equation}\label{def-varphi}
		\varphi_{r,n}(\theta)
		:=
		\frac{e^{-i\mu_{r,n}\theta}}{r^n}
		F_{r,n}(e^{i\theta})=
		\prod_{k=1}^{n}\psi_r(k\theta),
	\end{equation}
	where
	\begin{equation}\label{eq:psi}
		\psi_r(x)
		=
		\frac{1}{r}\sum_{\ell=0}^{r-1}
		e^{i(\ell-(r-1)/2)x}.
	\end{equation}
	
	The following lemma collects some  basic properties of $\psi_r$ and $\varphi_{r,n}$.
	See, for instance,~\cite[Sections~3.3 and~3.5]{Dur19}.
	We include the proofs for completeness.

	\begin{lemma}\label{realeven}
		We have 
		\begin{itemize}
			\item[\rm{(i)}]  $\psi_r(x)$ is real and even. For $x\notin2\pi\mathbb{Z}$, $|\psi_r(x)|<1$ and
			\begin{equation}\label{eq:psi-sin}
				\psi_r(x)=\frac{\sin(rx/2)}{r\sin(x/2)}.
			\end{equation}
			For $x=2\pi s$ with $s\in \mathbb{Z}$, $\psi_r(x)=(-1)^{(r-1)s}$. 
			\item[\rm{(ii)}]  $\varphi_{r,n}(\theta)$ is real and even. For $\theta\notin2\pi\mathbb{Z}$, $|\varphi_{r,n}(\theta)|<1$. For $\theta\in 2\pi\mathbb{Z}$, $|\varphi_{r,n}(\theta)|=1$.
		\end{itemize}
	\end{lemma}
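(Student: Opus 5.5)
The plan is to argue directly from the definition \eqref{eq:psi}. For (i), pair the terms with indices $\ell$ and $r-1-\ell$ in $\psi_r(x)$. Their exponents $(\ell-(r-1)/2)$ and $((r-1)/2-\ell)$ are negatives of each other, so
\[
\psi_r(x)=\frac1r\sum_{\ell=0}^{r-1}\cos\bigl((\ell-(r-1)/2)x\bigr).
\]
This shows at once that $\psi_r$ is real and even.

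For $x\notin 2\pi\mathbb Z$ we have $e^{ix}\ne1$, and I would sum the geometric series:
\[
\sum_{\ell=0}^{r-1}e^{i\ell x}=\frac{e^{irx}-1}{e^{ix}-1}
=e^{i(r-1)x/2}\,\frac{\sin(rx/2)}{\sin(x/2)}.
\]
Multiplying by $e^{-i(r-1)x/2}/r$ then gives \eqref{eq:psi-sin}.

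For the strict bound $|\psi_r(x)|<1$, apply the triangle inequality to $\frac1r\sum_\ell e^{i(\ell-(r-1)/2)x}$. Equality would force all the unit vectors $e^{i\ell x}$ to be parallel. Since $r\ge2$, this would force $e^{ix}=1$, which is excluded. This avoids any analysis of the sine quotient.

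At $x=2\pi s$ each exponent $(\ell-(r-1)/2)2\pi s$ equals $2\pi\ell s-\pi(r-1)s$. Every term therefore equals $e^{-i\pi(r-1)s}=(-1)^{(r-1)s}$, and averaging gives $\psi_r(2\pi s)=(-1)^{(r-1)s}$.

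For (ii), reality and evenness of $\varphi_{r,n}=\prod_{k=1}^n\psi_r(k\theta)$ follow from (i), factor by factor. If $\theta\notin2\pi\mathbb Z$, the $k=1$ factor satisfies $|\psi_r(\theta)|<1$, and every other factor has modulus at most $1$ by (i), since the value is exactly $\pm1$ on $2\pi\mathbb Z$ and less than $1$ elsewhere. Hence $|\varphi_{r,n}(\theta)|<1$. If $\theta\in2\pi\mathbb Z$, then each $k\theta\in2\pi\mathbb Z$, so each factor is $\pm1$ and the product has modulus $1$.

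No step is a real obstacle. The only point needing care is that the strict inequality $|\varphi_{r,n}(\theta)|<1$ comes from the single factor $k=1$, together with the uniform bound $|\psi_r|\le1$ for all other factors, including those with $k\theta\in2\pi\mathbb Z$.
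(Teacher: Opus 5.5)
Your proposal is correct and follows the paper's proof step by step. It uses the same pairing of $\ell$ with $r-1-\ell$, the same geometric-sum derivation of the sine quotient, the same equality case of the triangle inequality for $|\psi_r(x)|<1$, and the same use of the $k=1$ factor (with $|\psi_r|\le1$ elsewhere) for $|\varphi_{r,n}(\theta)|<1$. One wording fix: equality in the triangle inequality forces the unit vectors $e^{i\ell x}$ to have the same argument, not merely to be parallel. At $x=\pi$ they are the parallel vectors $\pm1$, yet the inequality is strict.
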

	\begin{proof}
		Pairing the terms with indices $\ell$ and $r-1-\ell$ gives
		\[
		\psi_r(x)
		= \frac{1}{r}\sum_{\ell=0}^{r-1}
		\cos\bigl((\ell-(r-1)/2)x\bigr),
		\]
		so $\psi_r$ is real and even.
		For $x\notin2\pi\mathbb{Z}$ (i.e., $\sin(x/2)\ne 0$), the geometric-sum formula yields
		\[
		\psi_r(x)
		= \frac{e^{-i(r-1)x/2}}{r}
		\frac{1-e^{irx}}{1-e^{ix}}.
		\]
		Using the identity
		\[
		1-e^{iy}
		= e^{iy/2}\bigl(e^{-iy/2}-e^{iy/2}\bigr)
		= -2i\,e^{iy/2}\sin(y/2),
		\]
		with $y=rx$ and $y=x$, respectively, we obtain that $\psi_r(x)$ equals
		\[
		\frac{e^{-i(r-1)x/2}}{r}
		\frac{1-e^{irx}}{1-e^{ix}}
		=
		\frac{e^{-i(r-1)x/2}}{r}\frac{-2i\,e^{irx/2}\sin(rx/2)}
		{-2i\,e^{ix/2}\sin(x/2)}
		=
		\frac{\sin(rx/2)}{r\sin(x/2)}.
		\]
		
		By the definition~\eqref{eq:psi} of $\psi_r(x)$,  we have $|\psi_r(x)|\le 1$.
		Equality holds if and only if all summands have the same
		argument. Since the ratio of consecutive summands is
		$e^{ix}$, this occurs precisely when $x\in2\pi\mathbb{Z}$.
		For $x=2\pi s$, direct substitution gives
		\[
		\psi_r(2\pi s)=\frac{1}{r}\sum_{\ell=0}^{r-1}
		e^{i(\ell-(r-1)/2)\cdot 2\pi s}=e^{-i(r-1)\pi s}=(-1)^{(r-1)s}.
		\]
		
		By the definition~\eqref{def-varphi}, $\varphi_{r,n}$ is also
		real and even. If $\theta \notin2\pi\mathbb{Z}$, then
		\[
		|\varphi_{r,n}(\theta)|
		= \prod_{k=1}^{n}|\psi_r(k\theta)|
		\le |\psi_r(\theta)|<1.
		\]
		If $\theta\in2\pi\mathbb{Z}$, then every factor $|\psi_r(k\theta)|=1$,
		so $|\varphi_{r,n}(\theta)|=1$.
		This completes the proof.
	\end{proof}

	Recall from Theorem~\ref{checkinginterval} that, if $F_{r,n-1}$ is unimodal,
	to prove $F_{r,n}$ is unimodal,
	it suffices to verify $B_{r,n}(j)\ge0$ for
	$
	\lfloor\mu_{r,n-1}\rfloor<j\le\lfloor\mu_{r,n}\rfloor.
	$
	In this section, we use $t$ to denote the distance from $j-1/2$ to the center $\mu_{r,n},$
	\begin{equation}\label{fd:t}
		t:=\mu_{r,n}-j+\frac12.
	\end{equation}
	Then 
	\begin{equation}\label{bound:t}
	0<t\le\mu_{r,n}-\mu_{r,n-1}+\frac12=\frac{(r-1)n+1}{2}\le\frac{rn}{2}.
	\end{equation}
	The following lemma gives a Fourier integral representation of
	$B_{r,n}(j)$ and a quadratic lower bound for $\varphi_{r,n}$.
	The integral representation follows from Fourier inversion for
	lattice distributions (see \cite[Exercise~3.3.2]{Dur19}).

	\begin{lemma}\label{lem:inversion}
		For every integer $j$, let $t=\mu_{r,n}-j+\frac12$. Then
		\begin{equation}\label{eq:inversion}
			B_{r,n}(j)=\frac{2r^n}{\pi}
			\int_0^\pi\varphi_{r,n}(\theta)\sin(t\theta)\sin(\theta/2)\,d\theta.
		\end{equation}
		Moreover, with $\sigma^2=(r^2-1)n(n+1)(2n+1)/72$,
		\begin{equation}\label{eq:variance-bound}
			\varphi_{r,n}(\theta)\ge1-\frac{\sigma^2\theta^2}{2}.
		\end{equation}
	\end{lemma}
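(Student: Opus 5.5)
The plan has two parts: the inversion formula~\eqref{eq:inversion}, and the quadratic lower bound~\eqref{eq:variance-bound}.

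\emph{Inversion formula.} We start from coefficient extraction on the unit circle. Since $(1-q)F_{r,n}(q)$ is a polynomial with $B_{r,n}(j)=[q^j](1-q)F_{r,n}(q)$, orthogonality of $e^{ik\theta}$ on $[-\pi,\pi]$ gives
\[
B_{r,n}(j)=\frac1{2\pi}\int_{-\pi}^{\pi}(1-e^{i\theta})F_{r,n}(e^{i\theta})e^{-ij\theta}\,d\theta .
\]
Next we substitute $F_{r,n}(e^{i\theta})=r^n e^{i\mu_{r,n}\theta}\varphi_{r,n}(\theta)$ from~\eqref{def-varphi}, together with the identity $1-e^{i\theta}=-2i\,e^{i\theta/2}\sin(\theta/2)$ used in the proof of Lemma~\ref{realeven}. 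The total phase is then $e^{i(\mu_{r,n}-j+1/2)\theta}=e^{it\theta}$, so the integrand becomes $-2i\,r^n\varphi_{r,n}(\theta)\sin(\theta/2)e^{it\theta}$. By Lemma~\ref{realeven}(ii), $\varphi_{r,n}$ is real and even, and $\sin(\theta/2)$ is odd. Hence in $e^{it\theta}=\cos(t\theta)+i\sin(t\theta)$ the cosine part integrates to zero, and the sine part gives an even integrand. Using $-2i\cdot i=2$ and folding the integral onto $[0,\pi]$ yields exactly $\frac{2r^n}{\pi}\int_0^\pi\varphi_{r,n}\sin(t\theta)\sin(\theta/2)\,d\theta$. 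Throughout, $t$ is a half-integer shift, but $e^{it\theta}$ causes no periodicity issue because we never shift the interval.

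\emph{Quadratic lower bound.} The cleanest route is probabilistic. The function $\varphi_{r,n}(\theta)=\E\, e^{i\theta S}$ is the characteristic function of $S=\sum_{k=1}^n kX_k$, where the $X_k$ are independent and uniform on $\{-(r-1)/2,\ldots,(r-1)/2\}$. Then $\E S=0$, and
\[
\Var X_k=\frac{r^2-1}{12},\qquad \Var S=\frac{r^2-1}{12}\sum_{k=1}^n k^2=\frac{(r^2-1)n(n+1)(2n+1)}{72}=\sigma^2.
\]
Since $\varphi_{r,n}$ is real, $\varphi_{r,n}(\theta)=\E\cos(\theta S)$. Applying the elementary inequality $\cos x\ge 1-x^2/2$ pointwise and taking expectations gives $\varphi_{r,n}(\theta)\ge1-\theta^2\E S^2/2=1-\sigma^2\theta^2/2$.

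The variance computation is the only place requiring care; the check $\sum_{\ell}(\ell-(r-1)/2)^2/r=(r^2-1)/12$ is standard. Neither step is a real obstacle. The main point to get right is the sign and phase bookkeeping in the inversion formula, namely that the phase exponent is exactly $t=\mu_{r,n}-j+\frac12$ and that the constant comes out as $2r^n/\pi$.
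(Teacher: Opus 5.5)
Your proposal is correct and follows essentially the paper's proof. The variance bound is obtained exactly as in the paper, via $\varphi_{r,n}(\theta)=\E\cos(\theta S)$ and $\cos x\ge 1-x^2/2$; the paper uses uncentered $X_k$ on $\{0,\ldots,r-1\}$ and then subtracts $\mu_{r,n}$, which amounts to the same thing. The inversion formula rests on the same ingredients: orthogonality on $[-\pi,\pi]$, the phase factorization $F_{r,n}(e^{i\theta})=r^ne^{i\mu_{r,n}\theta}\varphi_{r,n}(\theta)$, and the realness and evenness of $\varphi_{r,n}$. The only cosmetic difference is that the paper first writes $A_{r,n}(j)$ as a cosine integral and then differences it using $\cos((\mu_{r,n}-j)\theta)-\cos((\mu_{r,n}-j+1)\theta)=2\sin(t\theta)\sin(\theta/2)$. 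You instead extract $B_{r,n}(j)$ directly from $(1-q)F_{r,n}(q)$ and let parity eliminate the cosine part.
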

	\begin{proof}
		Let $X_1,\ldots,X_n$ be independent random variables, each uniformly distributed  on
		$\{0,\ldots,r-1\}$, and let $Y=\sum_{k=1}^n kX_k$.
		Then 
		$$
		\textrm{Pr}(Y=j)=\frac{A_{r,n}(j)}{r^n}.
		$$
		For every $X_k$,
		the expectation $\E X_k$ equals 
		$$
		\frac{1}{r}\sum_{\ell=0}^{r-1} \ell=\frac{r-1}{2},
		$$
		and the variance $\Var X_k=\E X_k^2-(\E X_k)^2$ equals
		$$
		\frac{1}{r}\sum_{\ell=0}^{r-1} \ell^2-\left(\frac{r-1}{2}\right)^2=\frac{(r-1)(2r-1)}{6}-\frac{(r-1)^2}{4}=\frac{r^2-1}{12}.
		$$
		Then the expectation and variance of $Y$ are 
		\begin{align*}
			\E Y&=\sum_{k=1}^n k \, \E X_k=\frac{(r-1)n(n+1)}{4}=\mu_{r,n};\\
			\Var Y&=\sum_{k=1}^n k^2 \, \Var X_k=\frac{(r^2-1)n(n+1)(2n+1)}{72}=\sigma^2.
		\end{align*}
		Moreover,
		$$
		\mathbb{E}\!\left[
		e^{i(X_k-(r-1)/2)k\theta}
		\right]=
		\sum_{\ell=0}^{r-1}
		e^{i(\ell-(r-1)/2)k\theta} \,\textrm{Pr}(X_k=\ell)
		=
		\frac{1}{r}\sum_{\ell=0}^{r-1}
		e^{i(\ell-(r-1)/2)k\theta}.
		$$
		Hence, by~\eqref{def-varphi} and~\eqref{eq:psi},
		$\varphi_{r,n}(\theta)$ equals
		\begin{align}\label{eq:prob}
			\prod_{k=1}^{n}\psi_r(k\theta)=\prod_{k=1}^{n}
			\left[
			\frac{1}{r}\sum_{\ell=0}^{r-1}
			e^{i(\ell-(r-1)/2)k\theta}
			\right]=
			\prod_{k=1}^{n}
			\mathbb{E}\!\left[
			e^{i(X_k-(r-1)/2)k\theta}
			\right].
		\end{align} 
		
		Since
		\[
		Y-\mu_{r,n}=Y-\frac{(r-1)n(n+1)}{4}
		=
		\sum_{k=1}^{n}
		k\left(X_k-\frac{r-1}{2}\right),
		\]
		by the independence of $X_1,\ldots,X_n$, ~\eqref{eq:prob} gives
		\begin{align*}
			\varphi_{r,n}(\theta)&=\prod_{k=1}^n\mathbb{E}\!\left[
			e^{i(X_k-(r-1)/2)k\theta}
			\right]=\mathbb{E}\! \left[\exp \left(i\sum_{k=1}^{n}(X_k-(r-1)/2)k\theta\right)
			\right]\\
			&=	\mathbb{E}\!\left[
			e^{i\theta(Y-\mu_{r,n})}
			\right]=
			\mathbb{E}\!\left[
			e^{i\theta(Y-\mathbb{E}Y)}
			\right].
		\end{align*}
		Since $\varphi_{r,n}$ is real by Lemma~\ref{realeven}, taking real parts gives
		\[
		\varphi_{r,n}(\theta)
		=
		\mathbb{E}\!\left[
		\cos\bigl(\theta(Y-\mu_{r,n})\bigr)
		\right].
		\]
		Applying the elementary inequality $\cos x\ge1-x^2/2$
		and taking expectations, we obtain
		\begin{align*}
			\varphi_{r,n}(\theta)
			&\ge
			\mathbb{E}\!\left[
			1-\frac{\theta^2(Y-\mu_{r,n})^2}{2}
			\right]=
			1-\frac{\theta^2}{2}
			\mathbb{E}\!\left[(Y-\mathbb{E}Y)^2\right]
			=
			1-\frac{\sigma^2\theta^2}{2}.
		\end{align*}
		This proves~\eqref{eq:variance-bound}.
		
		Since $Y$ is integer-valued, Fourier orthogonality gives
		\[
		\frac{1}{2\pi}
		\int_{-\pi}^{\pi} e^{i\theta(Y-j)}\,d\theta
		=\begin{cases}
			1,\qquad &Y=j;\\
			0,\qquad &Y\ne j.
		\end{cases}
		\]
		Since $Y$ takes only finitely many values,
		taking expectations and interchanging expectation and integration yields
		$$
		\frac{A_{r,n}(j)}{r^n}=\textrm{Pr}(Y=j)=\mathbb{E}\!\left[\frac{1}{2\pi}
		\int_{-\pi}^{\pi}
		\left[e^{i\theta(Y-j)}\right]\,d\theta\right]=
		\frac{1}{2\pi}
		\int_{-\pi}^{\pi}
		\mathbb{E}\!\left[e^{i\theta(Y-j)}\right]\,d\theta.
		$$
		Hence,
		\begin{align*}
			\frac{A_{r,n}(j)}{r^n}
			=\frac{1}{2\pi}
			\int_{-\pi}^{\pi}
			\mathbb{E}\!\left[e^{i\theta(Y-\E Y)}\cdot e^{i\theta(\E Y-j)}\right]\,d\theta
			=\frac{1}{2\pi}
			\int_{-\pi}^{\pi}
			\varphi_{r,n}(\theta)
			e^{i(\mu_{r,n}-j)\theta}\,d\theta.
		\end{align*}
		Since $\varphi_{r,n}$ is real and even by Lemma~\ref{realeven},
		we have
		\[
		A_{r,n}(j)
		=
		\frac{r^n}{\pi}
		\int_0^\pi
		\varphi_{r,n}(\theta)
		\cos\bigl((\mu_{r,n}-j)\theta\bigr)\,d\theta.
		\]
		Thus, the term $B_{r,n}(j)$ equals
		\[
		A_{r,n}(j)-A_{r,n}(j-1)
		=
		\frac{r^n}{\pi}\int_0^\pi
		\varphi_{r,n}(\theta)
		\left[
		\cos\bigl((\mu_{r,n}-j)\theta\bigr)
		-
		\cos\bigl((\mu_{r,n}-j+1)\theta\bigr)
		\right]\,d\theta.
		\]
		Recall that $t=\mu_{r,n}-j+\tfrac12$,
		\[
		\cos\bigl((\mu_{r,n}-j)\theta\bigr)
		-
		\cos\bigl((\mu_{r,n}-j+1)\theta\bigr)
		=
		2\sin(t\theta)\sin(\theta/2).
		\]
		Consequently,
		\[
		B_{r,n}(j)
		=
		\frac{2r^n}{\pi}\int_0^\pi
		\varphi_{r,n}(\theta)
		\sin(t\theta)\sin(\theta/2)\,d\theta.
		\]
		This proves~\eqref{eq:inversion} and completes the proof.
	\end{proof}
	
	By~\eqref{fd:t} and~\eqref{bound:t},
	$t=\mu_{r,n}-j+\frac12$ and  $0<t\le rn/2$. 
	We split the integral at $2\pi/(rn)$ in~\eqref{eq:inversion} as
	\begin{align*}
		\frac{\pi}{2r^n}	B_{r,n}(j)&=\int_0^\pi\varphi_{r,n}(\theta)\sin(t\theta)\sin(\theta/2)\,d\theta\\
		&=\int_0^{2\pi/(rn)}\varphi_{r,n}(\theta)\sin(t\theta)\sin(\theta/2)\,d\theta+\int_{2\pi/(rn)}^\pi\varphi_{r,n}(\theta)\sin(t\theta)\sin(\theta/2)\,d\theta.
	\end{align*}
	Write
	\[
	P_{r,n}(t)=\int_0^{2\pi/(rn)}\varphi_{r,n}(\theta)
	\sin(t\theta)\sin(\theta/2)\,d\theta,\qquad
	J_{r,n}=\int_{2\pi/(rn)}^\pi\theta^2|\varphi_{r,n}(\theta)|\,d\theta.
	\]
	Since $|\sin x|\le |x|$ for every real $x$, we have
	\[
	|\sin(t\theta)\sin(\theta/2)|
	\le
	t\theta\cdot\frac{\theta}{2}
	=
	\frac{t\theta^2}{2}
	\qquad (t>0).
	\]
	Hence, the absolute value of the tail integral satisfies
	\begin{align*}
		\left|
		\int_{2\pi/(rn)}^\pi
		\varphi_{r,n}(\theta)
		\sin(t\theta)\sin(\theta/2)\,d\theta
		\right|
		&\le
		\int_{2\pi/(rn)}^\pi
		|\varphi_{r,n}(\theta)|
		|\sin(t\theta)\sin(\theta/2)|\,d\theta\\
		&\le
		\frac{t}{2}
		\int_{2\pi/(rn)}^\pi
		\theta^2|\varphi_{r,n}(\theta)|\,d\theta=
		\frac{t}{2}J_{r,n}.
	\end{align*}
	Therefore,
	\begin{align}\label{eq:local-tail}
		B_{r,n}(j)
		\ge
		\frac{2r^n}{\pi}
		\left(
		P_{r,n}(t)-\frac{t}{2}J_{r,n}
		\right).
	\end{align}
	In the following, we will show that
	$P_{r,n}(t)>\frac{t}{2}J_{r,n}$.

	\subsection{A lower bound for the main term $P_{r,n}(t)$}
	\label{subsec:main-term}
	In this subsection, we establish a lower bound for
	$P_{r,n}(t)$.
	
	\begin{proposition}\label{lem:positive-lobe}
		If $r\ge 21, n\ge 12$ and $0<t\le rn/2$, then
		\begin{equation*}
			P_{r,n}(t)>\frac{315\,t}{16\,r^3n^{9/2}}.
		\end{equation*}
	\end{proposition}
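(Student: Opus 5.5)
The plan is to discard most of the integration range and use only the quadratic lower bound \eqref{eq:variance-bound} near the origin, where the integrand behaves like $t\theta^2/2$. Throughout, $0\le\theta\le 2\pi/(rn)$.

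\textbf{Step 1: positivity of the integrand.} For $1\le k\le n$ we have $k\theta\le 2\pi/r<2\pi$, so $\sin(k\theta/2)>0$. We also have $rk\theta/2\le \pi k/n\le\pi$, so $\sin(rk\theta/2)\ge0$. By \eqref{eq:psi-sin}, every factor $\psi_r(k\theta)\ge0$, and hence $\varphi_{r,n}(\theta)\ge0$ on the whole interval. Since $t\le rn/2$, we get $t\theta\le\pi$, so $\sin(t\theta)\ge0$; and $\sin(\theta/2)\ge0$. Thus the integrand of $P_{r,n}(t)$ is nonnegative. For any $a\le 2\pi/(rn)$ this gives
\[
P_{r,n}(t)\ge\int_0^{a}\varphi_{r,n}(\theta)\sin(t\theta)\sin(\theta/2)\,d\theta .
\]

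\textbf{Step 2: choice of cutoff.} Take $a=\sqrt2/\sigma$, where $\sigma^2=(r^2-1)n(n+1)(2n+1)/72$ is as in Lemma~\ref{lem:inversion}. This is where the quadratic bound $1-\sigma^2\theta^2/2$ stays nonnegative. We must check $a\le 2\pi/(rn)$. This is equivalent to $\sigma^2\ge r^2n^2/(2\pi^2)$, which holds comfortably since $\sigma^2\gtrsim r^2n^3/36$ and $n\ge12$, $r\ge21$. On $[0,a]$ we then use three elementary bounds:
\[
\varphi_{r,n}(\theta)\ge 1-\frac{\sigma^2\theta^2}{2},\qquad
\sin(\theta/2)\ge\frac{\theta}{2}\Bigl(1-\frac{\theta^2}{24}\Bigr),\qquad
\sin(t\theta)\ge t\theta\Bigl(1-\frac{(t\theta)^2}{6}\Bigr).
\]
Since $ta\le (rn/2)\sqrt2/\sigma=O(n^{-1/2})$ and $a=O(1/(rn^{3/2}))$, both correction factors are at least some explicit constant $1-\varepsilon$, with $\varepsilon$ small for $n\ge12$ and $r\ge21$. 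This yields
\[
P_{r,n}(t)\ge(1-\varepsilon)^2\,\frac t2\int_0^{\sqrt2/\sigma}\Bigl(1-\frac{\sigma^2\theta^2}{2}\Bigr)\theta^2\,d\theta
=(1-\varepsilon)^2\,\frac t2\cdot\frac{4\sqrt2}{15\,\sigma^3}.
\]

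\textbf{Step 3: bounding $\sigma$ and matching the constant.} Use $r^2-1<r^2$ and $n(n+1)(2n+1)\le c\,n^3$, where $c=(13\cdot 25)/(12\cdot 12)$ is the value at $n=12$ and the ratio decreases in $n$. This gives $\sigma^3\le (c/72)^{3/2}r^3n^{9/2}$, so
\[
P_{r,n}(t)\ge K\,\frac{t}{r^3n^{9/2}},\qquad K=(1-\varepsilon)^2\,\frac{2\sqrt2}{15}\Bigl(\frac{72}{c}\Bigr)^{3/2}.
\]
It then remains to check numerically that $K>315/16$. Here $(72/c)^{3/2}\approx 100$, so $K\approx 18.9\,(1-\varepsilon)^2$.

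\textbf{Main obstacle.} That estimate is slightly below $315/16\approx19.7$, so the crude cutoff $a=\sqrt2/\sigma$ must be tuned, and I expect this constant-chasing to be the main difficulty. I would try, in order:
\begin{enumerate}
\item[(a)] improve the loss from $c$ by keeping $n(n+1)(2n+1)/(2n^3)$ only where necessary, or use $r^2-1$ exactly;
\item[(b)] replace $\cos x\ge1-x^2/2$ by the sharper $\cos x\ge 1-x^2/2+x^4/24-x^6/720$, or use the product structure $\varphi=\prod\psi_r(k\theta)$ with $\psi_r(x)\ge 1-(r^2-1)x^2/24$ directly, and integrate over a larger range $[0,a']$, since the integrand is already known to be nonnegative up to $2\pi/(rn)$ by Step 1;
\item[(c)] pick the cutoff to maximize the resulting polynomial integral.
\end{enumerate}
Since the target $315/16$ looks like the output of integrating such a polynomial bound exactly (315 suggests a factor like $1/(3\cdot5\cdot7\cdot\ldots)$), I would first try (b) with the quartic correction and see whether the constant lands exactly at $315/16$.
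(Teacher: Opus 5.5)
Your route is essentially the paper's: nonnegativity of the integrand on $[0,2\pi/(rn)]$, restriction to a window of width comparable to $1/\sigma$, the bound $\varphi_{r,n}\ge 1-\sigma^2\theta^2/2$ with cubic Taylor bounds for the two sines, then an upper bound for $\sigma^3$ in terms of $r^3n^{9/2}$. The only structural difference is the cutoff. The paper cuts at $1/\sigma$, giving $\int_0^1 y^2(1-y^2/2)\,dy=7/30$. Your cutoff $\sqrt2/\sigma$ is optimal for the quadratic bound and gives the larger value $4\sqrt2/15\approx 0.377$.

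As written, however, the proof does not finish, and the obstacle you identify is an arithmetic slip rather than a real difficulty. With $c=325/144$ you have $72/c=10368/325\approx 31.90$, so $(72/c)^{3/2}\approx 180.2$, not $100$. This is exactly the paper's bound $\sigma^3<r^3n^{9/2}/180$. Hence $K\ge 24\sqrt2\,(1-\varepsilon)^2\approx 33.9\,(1-\varepsilon)^2$, and the refinements (a)--(c) are unnecessary.

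What remains is to make $\varepsilon$ explicit, and here you should not charge both sine factors the same crude loss. Using only $\sigma^2\ge (r^2-1)n^3/36$, you get $(ta)^2/6\le 0.251$ at $r=21$, $n=12$. Then $33.9\cdot(0.749)^2\approx 19.1<315/16$, so a symmetric factor $(1-\varepsilon)^2$ with that $\varepsilon$ would genuinely fall short.

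Treat the two factors separately on $[0,\sqrt2/\sigma]$:
\begin{itemize}
\item For $\sin(\theta/2)$: $1-\theta^2/24\ge 1-1/(12\sigma^2)>1-10^{-5}$. This uses $\sigma\ge rn/2\ge 126$, which follows from $\sigma^2\ge r^2n^3/48\ge r^2n^2/4$.
\item For $\sin(t\theta)$: $1-(t\theta)^2/6\ge 1-t^2/(3\sigma^2)>7/9$. This uses $t^2/\sigma^2\le 18r^2n/\bigl((r^2-1)(n+1)(2n+1)\bigr)<2/3$ for $r\ge 21$, $n\ge 12$, as computed in the paper.
\end{itemize}
Together these give $K\ge 24\sqrt2\cdot\frac79\,(1-10^{-5})>26>315/16$. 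This closes the argument with room to spare. The same bound on $\sigma$ also settles your cutoff check, since $\sqrt2/\sigma\le 2\sqrt2/(rn)<2\pi/(rn)$.
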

	\begin{proof}
		Recall that 
		$$
		P_{r,n}(t)=\int_0^{2\pi/(rn)}\varphi_{r,n}(\theta)
		\sin(t\theta)\sin(\theta/2)\,d\theta.
		$$
		We first show that all factors $\varphi_{r,n}(\theta), \sin(t\theta), \sin(\theta/2)$ are nonnegative for $0\le \theta\le 2\pi/(rn).$
		At $\theta=0$, we have $\varphi_{r,n}(0)=1$,
		and both sine factors vanish.
		For $0<\theta\le 2\pi/(rn)$ and $1\le k\le n$, 
		by~\eqref{def-varphi} and~\eqref{eq:psi-sin}, 
		$$\varphi_{r,n}(\theta)=
		\prod_{k=1}^{n}\psi_r(k\theta),\qquad \psi_r(k\theta)
		=\frac{\sin(rk\theta/2)}{r\sin(k\theta/2)}.$$
		Since 
		\[
		0<\frac{k\theta}{2}\le\frac{\pi}{r}<\pi,
		\qquad
		0<\frac{rk\theta}{2}\le\pi,
		\]
		we have
		$
		\psi_r(k\theta)
		\ge0,
		$
		and hence
		$
		\varphi_{r,n}(\theta)
		\ge0.
		$
		Moreover, since $0<t\le rn/2$,
		\[
		0\le t\theta\le\pi,
		\qquad
		0\le\theta/2\le\pi/(rn)<\pi.
		\]
		Thus $\sin(t\theta)$ and $\sin(\theta/2)$ are also
		nonnegative.
		
		Recall that 
		$$
		\sigma^2
		=
		\frac{(r^2-1)n(n+1)(2n+1)}{72}.
		$$
		We next claim that $\sigma\ge rn/2$.
		Then $1/\sigma<2\pi/(rn)$ and by the nonnegativity of the three factors $\varphi_{r,n}(\theta), \sin(t\theta), \sin(\theta/2)$,
		we have 
		\begin{equation}\label{eq:est-p}
			P_{r,n}(t)\ge \int_0^{1/\sigma}\varphi_{r,n}(\theta)
			\sin(t\theta)\sin(\theta/2)\,d\theta.
		\end{equation}
	    When $r\ge 21$
		and $n\ge12$,
		using $r^2-1\ge3r^2/4$, we obtain
		\[
		\sigma^2
		=
		\frac{(r^2-1)n(n+1)(2n+1)}{72}\ge \frac{3r^2}{4} \cdot\frac{2n^3}{72}=
		\frac{r^2n^3}{48}
		\ge\frac{r^2n^2}{4}.
		\]
		This proves the claim. 
		Note that 
		$\sigma\ge rn/2\ge126$.
		
		We next prove that $t^2/\sigma^2<2/3$.
		Since $t\le rn/2$ by~\eqref{bound:t}, we have 
		\[
		\frac{t^2}{\sigma^2}\le \frac{r^2n^2}{4} \frac{72}{(r^2-1)n(n+1)(2n+1)}
		\le\frac{18r^2n}{(r^2-1)(n+1)(2n+1)}.
		\]
		The expression on the right decreases with $r$ or $n$.
		A direct calculation yields
		for $r\ge21$ and $n\ge12$, 
		\[
		\frac{t^2}{\sigma^2}\le \frac{18\cdot21^2\cdot12}{(21^2-1)\cdot13\cdot25}<\frac23.
		\]
		
		Now we are ready to give an estimate of $P_{r,n}(t)$ on $\theta\in[0,1/\sigma]$.
		Applying~\eqref{eq:variance-bound} and
		$\sin x\ge x-x^3/6$ with $0\le \theta\le 1/\sigma$ yields
		\begin{align*}
			&\varphi_{r,n}(\theta)\ge1-\frac{\sigma^2\theta^2}{2}\ge 1/2;\\ 
			&\sin(t\theta)\ge t\theta \left(1-\frac{t^2\theta^2}{6}\right)\ge t\theta \left(1-\frac{t^2}{6\sigma^2}\right)\ge t\theta\left(1-\frac{1}{6}\cdot \frac{2}{3}\right)\ge 0;\\
			&\sin(\theta/2)\ge \frac{\theta}{2} \left(1-\frac{\theta^2}{24}\right)\ge \frac{\theta}{2} \left(1-\frac{1}{24\sigma^2}\right)\ge 0.
		\end{align*}
		By~\eqref{eq:est-p}, this gives
		\begin{align*}
			P_{r,n}(t)&\ge \int_0^{1/\sigma}\varphi_{r,n}(\theta)
			\sin(t\theta)\sin(\theta/2)\,d\theta\\
			&\ge
			\frac{t}{2}
			\int_0^{1/\sigma}
			\theta^2
			\left(1-\frac{\sigma^2\theta^2}{2}\right)
			\left(1-\frac{t^2\theta^2}{6}\right)
			\left(1-\frac{\theta^2}{24}\right)
			\,d\theta.
		\end{align*}
		Expanding the last two factors and discarding the nonnegative
		term $t^2\theta^4/144$, we obtain
		\[
		P_{r,n}(t)
		\ge
		\frac{t}{2}
		\int_0^{1/\sigma}
		\theta^2
		\left(1-\frac{\sigma^2\theta^2}{2}\right)
		\left[
		1-\left(\frac{t^2}{6}+\frac{1}{24}\right)\theta^2
		\right]
		\,d\theta.
		\]
		Let $y=\sigma\theta$. Then
		\begin{align*}
			P_{r,n}(t)
			&\ge
			\frac{t}{2\sigma^3}
			\int_0^1
			y^2\left(1-\frac{y^2}{2}\right)
			\left[
			1-\left(
			\frac{t^2}{6\sigma^2}+\frac{1}{24\sigma^2}
			\right)y^2
			\right]
			\,dy\\
			&=
			\frac{t}{2\sigma^3}
			\left(
			\frac{7}{30}
			-\frac{3t^2}{140\sigma^2}
			-\frac{3}{560\sigma^2}
			\right).
		\end{align*}
		Since $t^2/\sigma^2<2/3$ and $\sigma^2>126^2$, we have
		\[
		\frac7{30}
		-\frac{3t^2}{140\sigma^2}
		-\frac3{560\sigma^2}
		>
		\frac7{30}-\frac1{70}-\frac3{560\cdot 126^2}>
		\frac7{32}.
		\]
		We conclude that
		\begin{equation}\label{PRN}
			P_{r,n}(t)>\frac{7}{32}\cdot\frac{t}{2\sigma^3}=\frac{7t}{64\sigma^3}.
		\end{equation}
		
		Since
		\[
		\frac{n(n+1)(2n+1)}{n^3}
		=
		2+\frac{3}{n}+\frac{1}{n^2}
		\]
		decreases with $n$, for $n\ge12$ we have
		\[
		\frac{n(n+1)(2n+1)}{n^3}
		\le
		2+\frac{1}{4}+\frac{1}{144}
		=
		\frac{325}{144}.
		\]
		Consequently,
		\[
		\sigma^2
		=
		\frac{(r^2-1)n(n+1)(2n+1)}{72}
		<
		\frac{325}{10368}\,r^2n^3.
		\]
		It follows that
		\[
		\sigma^3
		<
		\left(\frac{325}{10368}\right)^{3/2}r^3n^{9/2}
		<
		\frac{r^3n^{9/2}}{180}.
		\]
		Combining this with~\eqref{PRN}, we obtain
		\[
		P_{r,n}(t)
		>
		\frac{7t}{64\sigma^3}
		>
		\frac{315}{16}\frac{t}{r^3n^{9/2}}.
		\]
		This completes the proof.
	\end{proof}
	
	\subsection{An upper bound for the tail term $J_{r,n}$}
	Throughout this subsection, let $r\ge 21$ and $n\ge 12$ be integers.
    Recall that $$J_{r,n}=\int_{2\pi/(rn)}^\pi\theta^2|\varphi_{r,n}(\theta)|\,d\theta.
    $$
	We split the integral 
	$
	J_{r,n}=J_{r,n}^{(1)}+J_{r,n}^{(2)}+J_{r,n}^{(3)}
	$ as
	\begin{align}
	J_{r,n}^{(1)}&=\int_{2\pi/(rn)}^{\pi/n}\theta^2|\varphi_{r,n}(\theta)|\,d\theta,\label{3j1}\\
	J_{r,n}^{(2)}&=\int_{\pi/n}^{\pi/10}\theta^2|\varphi_{r,n}(\theta)|\,d\theta,\label{3j2}\\ 
	J_{r,n}^{(3)}&=\int_{\pi/10}^{\pi}\theta^2|\varphi_{r,n}(\theta)|\,d\theta. \label{3j3}
	\end{align}
	We will give the upper bounds for 
	$J_{r,n}^{(1)}, J_{r,n}^{(2)}, J_{r,n}^{(3)}.
	$

	Before that, we record an elementary bound that will be used repeatedly
	in the estimates.
	By the concavity of sine on $[0,\pi/2]$,
	  Jensen's inequality
	(see \cite[Theorem~1.6.2]{Dur19}) shows
	\begin{equation}\label{eq:sine-chord}
		\sin x\ge\frac{\sin a}{a}\,x
		\qquad
		(0\le x\le a,\quad 0<a\le\pi/2).
	\end{equation}

	\subsubsection{An upper bound for the tail term $J^{(1)}_{r,n}$}
	\begin{proposition}\label{lem:uniform-J0}
		For all integers \(r\ge21\) and \(n\ge12\),
		\[
		J^{(1)}_{r,n}<\frac{63}{2r^3n^{9/2}}.
		\]
	\end{proposition}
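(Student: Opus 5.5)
On the range $2\pi/(rn)\le\theta\le\pi/n$ every argument satisfies $k\theta\le\pi$. This lets us bound each factor by elementary sine estimates, with no cancellation near roots of unity to worry about. The target is a bound of the form $\int \theta^2|\varphi_{r,n}|\,d\theta\lesssim e^{-c\,r^2n^3\theta_0^2}$-type smallness. We only need to beat $63/(2r^3n^{9/2})$, and $r^3n^{9/2}\asymp\sigma^3$, so it suffices to show that $|\varphi_{r,n}(\theta)|$ is exponentially small in $r n$ on this range.

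\emph{Step 1: bound each factor.} For $0<x\le\pi$ we have $\sin(x/2)\ge x/\pi$ by \eqref{eq:sine-chord} with $a=\pi/2$. Hence
\[
|\psi_r(x)|=\frac{|\sin(rx/2)|}{r\sin(x/2)}\le\frac{\pi}{rx}.
\]
Apply this with $x=k\theta$, for the indices $k$ with $k\theta\ge 2\pi/r$. Since $\theta\ge 2\pi/(rn)$, this includes all $k\ge n/2$ roughly, and more precisely all $k\ge k_0:=\lceil 2\pi/(r\theta)\rceil\le n$. For the remaining factors use the trivial bound $|\psi_r|\le1$ from Lemma~\ref{realeven}. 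This gives
\[
|\varphi_{r,n}(\theta)|\le\prod_{k\ge n/2}\frac{\pi}{rk\theta}\le\Bigl(\frac{2\pi}{rn\theta}\Bigr)^{\lfloor n/2\rfloor}
\]
whenever $\theta\ge 4\pi/(rn)$. On $[2\pi/(rn),4\pi/(rn)]$ I would instead use the sharper chord bound on the top factors. Alternatively, restrict to $k\in[n/2,n]$ and note that $\pi/(rk\theta)\le 1/2$ there once $rk\theta\ge2\pi$. So with $\theta\ge2\pi/(rn)$ and $k\ge n/2$ we get $\pi/(rk\theta)\le 1$, which is not enough. The cleaner route is to use the factors $k\in[3n/4,n]$ only near the left endpoint, and bound $|\psi_r(x)|\le \pi/(rx)$ together with $|\psi_r(x)|\le 1-c\,r^2x^2$ for small $x$.

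\emph{Step 2: integrate.} Writing $u=rn\theta/(2\pi)\in[1,r/2]$, we have
\[
\theta^2|\varphi_{r,n}|\le \Bigl(\frac{2\pi u}{rn}\Bigr)^2 u^{-m}C^m
\]
with $m\approx n/2$ and $C=2$ or so. Integrating in $u$ gives roughly $(2\pi)^3 (rn)^{-3}\cdot C^m/(m-3)$. Compared with $63/(2r^3n^{9/2})$, this requires $C^m/(m-3)\ll n^{-3/2}$. So I need the per-factor bound to be a genuine contraction ($C<1$, i.e.\ $\pi/(rk\theta)<1/2$, say). I would arrange this by using only $k$ with $rk\theta\ge 2\pi$ in the region $u\ge2$, which is $k\ge n/u$, giving at least $n/2$ factors each at most $1/2$. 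That yields $2^{-n/2}$, and for $n\ge12$ this beats $n^{-3/2}$ by a numerical margin to be checked.

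\emph{Main obstacle: the window $u\in[1,2]$, i.e.\ $\theta$ just past $2\pi/(rn)$.} There the top factor $\psi_r(n\theta)$ is near its first zero and the chord bound gives only $\pi/(rk\theta)\approx 1/2$ for $k\approx n$, while smaller $k$ give ratio near $1$. Here I would use the variance-type upper bound instead: for $0\le x\le 2\pi/r$, the elementary inequality $\psi_r(x)\le \exp(-(r^2-1)x^2/24\cdot c)$ with an explicit constant follows from $\sin$ Taylor bounds. Multiplying over $k\le n$ gives
\[
|\varphi_{r,n}(\theta)|\le\exp(-c'\sigma^2\theta^2)=\exp(-c''n u^2),
\]
which is tiny since $\sigma^2\theta^2\gtrsim n$. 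Combining the two regimes and evaluating the constants numerically at $r=21$, $n=12$ (using monotonicity in $r,n$) should yield the stated $63/2$. This constant-chasing is where the real care lies.
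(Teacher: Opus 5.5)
Your sketch finds the right scale and correctly identifies the window just above $\theta=2\pi/(rn)$ as the obstacle, but it does not resolve that window, and the constant $63/2$ leaves no room for the estimates you propose.

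\textbf{The budget.} With $u=rn\theta/(2\pi)$ one has $r^3n^{9/2}J^{(1)}_{r,n}=(2\pi)^3n^{3/2}\int_1^{r/2}u^2|\varphi_{r,n}(2\pi u/(rn))|\,du$. So at $n=12$ you need $\int u^2|\varphi_{r,n}|\,du<63/(2(2\pi)^3 12^{3/2})\approx 3\cdot 10^{-3}$. Also, $|\varphi_{r,n}|$ is exponentially small only in $n$, not in $rn$: for fixed $u$ it tends to $\prod_k \sin(\pi uk/n)/(\pi uk/n)$ as $r\to\infty$. The factor $r^{-3}$ comes solely from the length scale.

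\textbf{The range $u\ge 2$.} Even counting all $\lfloor n/2\rfloor+1$ factors with $k\ge n/2$, each at most $1/u$, you get $(2\pi)^3n^{3/2}2^{3-m}/(m-3)\approx 161$ at $n=12$ (with $m=7$). So the check that ``$2^{-n/2}$ beats $n^{-3/2}$'' fails once the factor $(2\pi)^3\approx 248$ is kept. This part can be rescued by keeping the exact factors $n/(2ku)$.

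\textbf{The window $u\in[1,2]$.} This is the real gap. The bound $\psi_r(x)\le e^{-(r^2-1)x^2/24}$ holds only up to the first zero $x=2\pi/r$. For $u>1$, the factors with $k>n/u$ have $k\theta>2\pi/r$, so ``multiplying over $k\le n$'' to get $e^{-c'\sigma^2\theta^2}$ is unjustified. The envelope cannot be extended. On the second lobe, $|\psi_r(x)|\approx 0.22$ near $rx/2\approx 4.49$, whereas $e^{-(r^2-1)x^2/24}\approx 0.035$ there. A Gaussian envelope valid past the first zero therefore needs an exponent about $2.5$ times smaller, which at $n=12$ already exceeds the budget on $u\in[1,1.1]$. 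Alternatively, bounding the top factors by $1$ and using the sharp Gaussian only for $k\le n/u$ exceeds the $3\cdot10^{-3}$ budget on $u\in[1,3/2]$.

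\textbf{The missing idea.} Apply $|\psi_r(k\theta)|\le 1/(r\sin(k\theta/2))$ to \emph{all} $n$ factors, including small $k$ where the individual bound exceeds $1$. With $z=r\theta/2\ge\pi/n$, the product $\prod_{k=1}^n r\sin(kz/r)$ is essentially $n!z^n\ge n!(\pi/n)^n\approx\sqrt{2\pi n}\,(\pi/e)^n$. This is exponentially large because $\pi>e$. The paper then proceeds in four steps:
\begin{enumerate}
\item On $z\in[\pi/n,8/n]$ it makes the bound sharp via $\sin v/v\ge e^{-v^2/5}$ for $0\le v\le 1$, losing only $(91/90)^n$.
\item For $z\ge 8/n$ it uses the chord bound $\sin x\ge 2x/\pi$; the resulting loss $(\pi/2)^n$ is absorbed by $(n/8)^{n-3}/n!$.
\item It integrates $z^{2-n}$ explicitly.
\item It shows $n^{9/2}Z_n$ is decreasing, so only $n=12$ needs to be evaluated.
\end{enumerate}
At $n=12$ this gives about $30.9$, just below $63/2$. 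Taking, factor by factor, the minimum of your Gaussian bound and this bound would be pointwise at least as good. But your proposal neither sets that up nor carries out the explicit integration and the monotonicity in $n$ that the stated constant requires. The closing ``should yield $63/2$'' is unsupported.
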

	
	\begin{proof}
		Let \(z=r\theta/2\).  Then
		\[
		r^3 J^{(1)}_{r,n}=r^3\int_{2\pi/(rn)}^{\pi/n}\theta^2|\varphi_{r,n}(\theta)|\,d\theta
		=8\int_{\pi/n}^{r\pi/(2n)}
		z^2|\varphi_{r,n}(2z/r)|\,dz.
		\]
		Since \(\pi<8<r\pi/2\), we split this integral at \(z=8/n\),
		$$
		r^3 J^{(1)}_{r,n}
		=8\int_{\pi/n}^{8/n}
		z^2|\varphi_{r,n}(2z/r)|\,dz+8\int_{8/n}^{r\pi/(2n)}
		z^2|\varphi_{r,n}(2z/r)|\,dz.
		$$

		\medskip
		\noindent
		\textbf{The first integral.} 
		For $\pi/n\le z\le 8/n$ and $1\le k\le n$, we have 
		$
		0<{kz}/{r}\le 8/{21}<1.
		$
		For $0\le u\le1$, the Taylor estimate gives
		$
		{\sin u}/{u}\ge1-{u^2}/{6},
		$
		where the quotient at $u=0$ is interpreted by continuity.
		Moreover,
		\[
		-\log\left(1-\frac{u^2}{6}\right)
		=
		\int_0^{u^2/6}\frac{dt}{1-t}
		\le\frac{1}{1-u^2/6}\cdot \frac{u^2}{6}=
		\frac{u^2}{6-u^2}
		\le\frac{u^2}{5}.
		\]
		Consequently,
		\[
		\frac{\sin u}{u}
		\ge1-\frac{u^2}{6}
		\ge e^{-u^2/5}
		\qquad(0\le u\le1).
		\]
		Applying this inequality with $u=kz/r$, we obtain
		\[
		r\sin(kz/r)
		\ge
		kz\exp\left(-\frac{k^2z^2}{5r^2}\right).
		\]
	Put
\[
S_n=\sum_{k=1}^{n}k^2=\frac{n(n+1)(2n+1)}6.
\]
		Since $|\sin(kz)|\le1$, we obtain
		\[
		\begin{aligned}
			|\varphi_{r,n}(2z/r)|
			=
			\prod_{k=1}^{n}
			\frac{|\sin(kz)|}{r\sin(kz/r)}\le \prod_{k=1}^{n}
			\frac{1}{r\sin(kz/r)}
			\le
			\frac{\exp\left({S_nz^2}/{5r^2}\right)}{n!z^n}.
		\end{aligned}
		\]
	    For $z\le 8/n,$ $n\ge 12$, since $S_n/n^3=(1+1/n)(2+1/n)/6$ is decreasing with $n$,
		\[
		0\le\frac {{S_nz^2}/{5r^2}}{n}\le \frac{64}{n^2}\frac{S_n}{5r^2n}
		\le\frac{64S_n}{5\cdot21^2n^3}	\le\frac{64}{5\cdot21^2}\frac{(1+1/12)(2+1/12)}{6}=\frac{130}{11907}<\frac1{91}.
		\]
	   The inequality 
	   $
	   e^v\le\frac1{1-v}$ for
	   $0\le v<1$
	   gives 
		\[
		\exp\left({S_nz^2}/{(5r^2)}\right)=\left(\exp\left({S_nz^2}/{(5r^2n)}\right)\right)^n
		\le\left(\frac1{1-{S_nz^2}/{(5r^2n)}}\right)^n
		<\left(\frac{91}{90}\right)^n.
		\]
		It follows that
		\[
		8\int_{\pi/n}^{8/n}
		z^2|\varphi_{r,n}(2z/r)|\,dz
		\le
		\frac{8}{(n-3)n!}
		\left(\frac{91}{90}\right)^n
		\left(\frac n\pi\right)^{n-3}.
		\]

		\medskip
		\noindent
		\textbf{The second integral.} 
		For \(8/n\le z\le r\pi/(2n)\),
		we have
		$
		0<{kz}/{r}\le {\pi}/{2}$
		for	$1\le k\le n$.
		Taking $a=\pi/2$ in~\eqref{eq:sine-chord},
		we have $\sin x\ge {2x}/{\pi}$ and 
		$
		r\sin(kz/r)\ge  {2kz}/{\pi}.
		$
		Therefore
		\[
		|\varphi_{r,n}(2z/r)|
		\le
		\prod_{k=1}^{n}\frac{\pi}{2kz}
		=
		\frac{(\pi/2)^n}{n!z^n}.
		\]
		Integrating gives
		\[
		\begin{aligned}
			8\int_{8/n}^{r\pi/(2n)}
			z^2|\varphi_{r,n}(2z/r)|\,dz
			&\le
			\frac{8(\pi/2)^n}{n!}
			\int_{8/n}^{\infty}z^{2-n}\,dz\\
			&=
			\frac8{(n-3)n!}
			\left(\frac{\pi}{2}\right)^n
			\left(\frac n8\right)^{n-3}\\
			&\le
			\frac8{(n-3)n!}
			\left(\frac{11}{7}\right)^n
			\left(\frac n8\right)^{n-3}.
		\end{aligned}
		\]
		
		Combining the two integrals and
		using \(31/10<\pi<22/7\), we conclude that
		\begin{equation}\label{Zn}
		r^3 J^{(1)}_{r,n}\le \frac{8}{(n-3)n!}
		\left[
		\left(\frac{91}{90}\right)^n
		\left(\frac{10n}{31}\right)^{n-3}
		+
		\left(\frac{11}{7}\right)^n
		\left(\frac n8\right)^{n-3}
		\right].
		\end{equation}
		We claim that \(n^{9/2} Z_n\) decreases with \(n\),
		where $Z_n$ denotes the right-hand side of~\eqref{Zn}.
		Then 
		\begin{align*}
		r^3 n^{9/2} J^{(1)}_{r,n}&\le\frac{8 n^{9/2}}{(n-3)n!}
		\left[
		\left(\frac{91}{90}\right)^n
		\left(\frac{10n}{31}\right)^{n-3}
		+
		\left(\frac{11}{7}\right)^n
		\left(\frac n8\right)^{n-3}
		\right]\\
		&\le \frac{8\cdot 12^{9/2}}{9\cdot 12!}
		\left[
		\left(\frac{91}{90}\right)^{12}
		\left(\frac{120}{31}\right)^9
		+
		\left(\frac{11}{7}\right)^{12}
		\left(\frac32\right)^9
		\right]
		<\frac{63}{2}.
		\end{align*}

		Finally, we show that \(n^{9/2} Z_n\) decreases with \(n\).
		For \(a>0\), set
		\[
		T_n(a)=\frac{a^n n^{n+3/2}}{(n-3)n!}.
		\]
		Then
		\[
		n^{9/2} Z_n
		=
		8\left[
		\left(\frac{31}{10}\right)^3T_n\!\left(\frac{91}{279}\right)
		+
		8^3T_n\!\left(\frac{11}{56}\right)
		\right],\qquad \frac{T_{n+1}(a)}{T_n(a)}
		=
		a\frac{n-3}{n-2}
		\left(1+\frac1n\right)^{n+3/2}.
		\]
		Using
		\[
		\log(1+x)\le x-\frac{x^2}{2(1+x)}
		\quad(x\ge0),
		\qquad
		\log(1-x)\le-x
		\quad(0\le x<1),
		\]
		we obtain
		\begin{align*}
			\log\left[
			\frac{n-3}{n-2}
			\left(1+\frac1n\right)^{n+3/2}
			\right]
			&\le
			-\frac1{n-2}
			+\left(n+\frac32\right)
			\left(\frac1n-\frac1{2n(n+1)}\right)\\
			&=
			1-\frac{9n+6}{4n(n+1)(n-2)}
			<1.
		\end{align*}
		Thus \(T_{n+1}(a)/T_n(a)<ae<3a\).
		Since
		\[
		3\cdot\frac{91}{279}<1,
		\qquad
		3\cdot\frac{11}{56}<1,
		\]
		both terms decrease.
		This completes the proof.
	\end{proof}

	\subsubsection{An upper bound for the tail term $J^{(2)}_{r,n}$}
	\begin{lemma}\label{lem:power-envelope}
		For every integer $r\ge21$ and all real $x$ and $\theta$,
		we have
		\begin{align}
			|\psi_r(x)|&\le r^{-\sin^2(x/2)},\label{eq:psi-lem}\\
			|\varphi_{r,n}(\theta)|&\le r^{-\sum_{k=1}^{n}\sin^2(k\theta/2)}.\label{eq:varphi-lem}
		\end{align}
	\end{lemma}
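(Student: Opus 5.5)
The plan is to reduce \eqref{eq:psi-lem} to a one-variable inequality in $s:=|\sin(x/2)|\in[0,1]$ and to treat small and large $s$ separately. Inequality \eqref{eq:varphi-lem} then follows at once by taking the product over $k$, since $\varphi_{r,n}(\theta)=\prod_{k=1}^n\psi_r(k\theta)$ by \eqref{def-varphi}.

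\emph{Reduction.} From \eqref{eq:psi} we have $|\psi_r(x+2\pi)|=|\psi_r(x)|$, and the right-hand side of \eqref{eq:psi-lem} is also $2\pi$-periodic. Both sides are even by Lemma~\ref{realeven}. So it suffices to take $x\in[0,\pi]$ and write $u=x/2\in[0,\pi/2]$, so that $s=\sin u$. At $x=0$ both sides equal $1$. Otherwise \eqref{eq:psi-sin} gives $|\psi_r(x)|=|\sin(ru)|/(rs)$.

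\emph{Large $s$.} Using only $|\sin(ru)|\le1$, it is enough to show $1/(rs)\le r^{-s^2}$. This is equivalent to
\[
g(s):=\log s+(1-s^2)\log r\ge0 .
\]
The function $g$ is concave on $(0,1]$ and satisfies $g(1)=0$. At $s=2/r$,
\[
g(2/r)=\log 2-\frac{4\log r}{r^2}>0
\]
because $r\ge21$. By concavity, $g\ge0$ on the whole interval $[2/r,1]$, which settles the case $s\ge 2/r$.

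\emph{Small $s$.} Now assume $s<2/r$. Since $\sin u\ge 2u/\pi$ on $[0,\pi/2]$, this gives $u<\pi/r$, hence $ru<\pi$. Therefore $\sin(ru)>0$ and $\psi_r(x)>0$, so no absolute value is needed. I would use the cosine-average form from the proof of Lemma~\ref{realeven}:
\[
\psi_r(x)=\frac1r\sum_{\ell=0}^{r-1}\cos(m_\ell x)
=1-\frac2r\sum_{\ell=0}^{r-1}\sin^2(m_\ell u),
\qquad m_\ell=\ell-\frac{r-1}{2}.
\]
Every $|m_\ell u|\le (r-1)u/2<\pi/2$, so the chord bound \eqref{eq:sine-chord} with $a=\pi/2$ gives $\sin^2(m_\ell u)\ge 4m_\ell^2u^2/\pi^2$. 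Together with $\sum_\ell m_\ell^2=r(r^2-1)/12$ this yields
\[
\psi_r(x)\le 1-\frac{2(r^2-1)}{3\pi^2}\,u^2\le \exp\!\Bigl(-\frac{2(r^2-1)}{3\pi^2}\,u^2\Bigr).
\]
Since $u^2\ge s^2$, it remains to check $2(r^2-1)/(3\pi^2)\ge\log r$ for $r\ge21$. This holds with a huge margin: the left side is about $29.7$ at $r=21$, against $\log 21\approx 3.04$, and the left side grows much faster.

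\emph{Expected difficulty.} The only genuinely delicate point is this small-$s$ regime. There the crude bound $1/(rs)$ exceeds $1$, so one needs the second-order behaviour of $\psi_r$ near $0$, and one must confirm that the threshold $s=2/r$ chosen for the concavity argument is compatible with $ru<\pi$. If the constants turn out tighter than I expect, I would move the threshold to $c/r$ for a suitable $c\in(1,2]$. The concavity argument still works as long as $\log c>c^2\log r/r^2$, and the small-$s$ argument still works as long as $c\,\pi/2<\pi$.
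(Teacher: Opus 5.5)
Your proof is correct, and its overall structure matches the paper's. You use the same periodicity and evenness reduction. You also use the same concavity argument where $|\sin(x/2)|$ is not small. The paper works with $G_r(z)=(1-z)\log r+\tfrac12\log z$ in the variable $z=\sin^2 y$, splits on $y$ at $\pi/r$, and deduces $\sin y\ge 2/r$; splitting directly on $s\ge 2/r$, as you do, amounts to the same thing.

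The genuine difference is the small-angle regime. The paper uses the partial-fraction expansion of the cotangent to get $\frac{d}{dy}\log(\sin y/y)=\cot y-1/y<-y/3$. Integrating this from $y$ to $ry$ gives $|\psi_r(2y)|\le e^{-(r^2-1)y^2/6}$. That bound has the sharp second-order constant, since $\psi_r(2u)=1-(r^2-1)u^2/6+O(u^4)$.

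You instead write $\psi_r(2u)=1-\frac2r\sum_\ell\sin^2(m_\ell u)$ and apply \eqref{eq:sine-chord} termwise. This loses a factor $4/\pi^2$ in the exponent. In exchange, it uses only ingredients already in the paper: the cosine form from Lemma~\ref{realeven}, the chord bound, and $\sum_\ell m_\ell^2=r(r^2-1)/12$. It is essentially the upper-bound counterpart of the variance argument behind \eqref{eq:variance-bound}.

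Both coefficients, $2(r^2-1)/(3\pi^2)$ and $(r^2-1)/6$, comfortably exceed $\log r$ for $r\ge21$. So the weaker constant costs nothing here, and your contingency of moving the threshold to $c/r$ is not needed.
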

	\begin{proof}
		By~\eqref{def-varphi}, we have $\varphi_{r,n}(\theta)=
		\prod_{k=1}^{n}\psi_r(k\theta)$.
		Applying~\eqref{eq:psi-lem} to each factor and taking the product yields~\eqref{eq:varphi-lem}. 
		It therefore suffices to prove~\eqref{eq:psi-lem}.
		By Lemma~\ref{realeven},
		when $x=2\pi s$ for $s\in \mathbb{Z}$,
		$|\psi_r(2\pi s)|=1=r^{-\sin^2(\pi s)}.$
		In the following, we consider the case $x\notin 2\pi \mathbb{Z}$ and
		\[
		\psi_r(x)
		=\frac{\sin(rx/2)}{r\sin(x/2)}
		\]
		by~\eqref{eq:psi-sin}.
		Lemma~\ref{realeven} also shows that $\psi_r(x)$ is even.
		By  periodicity,
		$$|\psi_r(2\pi+x)|=\frac{|\sin (r(2\pi+x)/2)|}{|r\sin ((2\pi+x)/2)|}=\frac{|\sin (r\pi+rx/2)|}{|r\sin(\pi+x/2)|}=|(-1)^{r+1}\psi_r(x)|=|\psi_r(x)|.$$ 
		Thus, it is enough to consider the case $0<x\le \pi$.
		Let $x=2y$.
		It suffices to prove that for $0<y\le \pi/2$,
		\begin{equation}\label{x=2y}
			\frac{|\sin(ry)|}{|r\sin y|}\le r^{-\sin^2y}.
		\end{equation}
		We divide the proof into two parts: $0<y<\pi/r$ and $\pi/r\le y\le \pi/2$.
		
		\medskip
		\noindent
		\textbf{The case $0<y<\pi/r$.}
		By the expansion of the cotangent function
		\cite[Eq.~(4.22.3)]{OLBC10},
		\[
		\frac{d}{dy}\left(\log\frac{\sin y}{y}\right)=\cot y-\frac{1}{y}=-2y\sum_{n=1}^{\infty}\frac{1}{n^{2}\pi^{2}-y^{2}}<-2y\sum_{n=1}^{\infty}\frac{1}{n^{2}\pi^{2}}\le-\frac y3.
		\]
		Then 
		\begin{align*}
			\log\left(\frac{|\sin(ry)|}{|r\sin y|}\right)&=\log \frac{\sin (ry)/ry}{\sin y/y}
			=
			\log \frac{\sin (ry)}{r y}-\log\frac{\sin y}{y}\\
			&\le \int_y^{ry} -\frac{u}{3} \,du=-\frac{r^2-1}{6}y^2
			\le-(\log r)\sin^2y,
		\end{align*}
		since $(r^2-1)/6>\log r$ for $r\ge 21$ and $y\ge \sin y$ for $y\ge 0$. 
		Hence,~\eqref{x=2y} holds for $0<y<\pi/r$.
		
		\medskip
		\noindent
		\textbf{The case $\pi/r\le y\le\pi/2$.}  
		Taking $a=\pi/2$ in~\eqref{eq:sine-chord},
		we have $$\sin y\ge \frac{2y}{\pi}\ge \frac{2}{r}.$$
		Let $z=\sin^2y$.
		Then $z\in[4/r^2,1]$ and
		\[
		\frac{|\sin(ry)|}{|r\sin y|}
		\le
		\frac{1}{r\sin y}
		=
		\frac{1}{r\sqrt{z}}.
		\]
		We claim that $1/(r\sqrt z)\le r^{-z}$. 
		Hence,~\eqref{x=2y} holds for $\pi/r\le y\le \pi/2.$ 
		
		We now prove the claim.
		Define
		\[
		G_r(z)=(1-z)\log r+\frac12\log z.
		\]
		Then $G_r(1)=0$ and 
		$G_r(4/r^2)=\log2-4\log r/r^2>0.$ 
		Since $$G''_r(z)=-\frac{1}{2z^2}<0,$$
		the function $G_r(z)$ is concave.
		Hence Jensen's inequality shows that $G_r(z)\ge 0$.
		It follows that
		\begin{align*}
			\frac{1}{r\sqrt{z}}\le r^{-z}
			&\Longleftrightarrow
			\log\frac{1}{r\sqrt{z}}\le\log(r^{-z})\\
			&\Longleftrightarrow
			-\log r-\frac{1}{2}\log z\le -z\log r\\
			&\Longleftrightarrow
			(1-z)\log r+\frac{1}{2}\log z\ge0.
		\end{align*}
		This proves the claim.
	\end{proof}
	
	By Lemma~\ref{lem:power-envelope}, an upper bound for $|\varphi_{r,n}(\theta)|$ follows
	from a lower bound for $\sum_{k=1}^{n}\sin^2(k\theta/2)$.
	The next lemma provides such a bound uniformly for
	$\pi/n\le\theta\le\pi$.
	
	\begin{lemma}\label{lem:sine-square}
		If $n\ge3$ and $\pi/n\le\theta\le\pi$, then
		$$\sum_{k=1}^{n}\sin^2(k\theta/2)\ge2n/5.$$
	\end{lemma}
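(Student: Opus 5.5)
Since $\sin^2(k\theta/2)=\tfrac12(1-\cos k\theta)$, the sum to be bounded equals $\tfrac n2-\tfrac12\sum_{k=1}^n\cos(k\theta)$. The claim is therefore equivalent to
\[
\sum_{k=1}^{n}\cos(k\theta)\le \frac{n}{5},\qquad \frac{\pi}{n}\le\theta\le\pi .
\]
The plan is to evaluate this cosine sum with the Dirichlet kernel and then bound it uniformly in two regimes of $\theta$.

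The closed form is
\[
\sum_{k=1}^n\cos(k\theta)=\frac{\sin\bigl((n+\tfrac12)\theta\bigr)}{2\sin(\theta/2)}-\frac12,
\]
which gives the crude bound $\sum_{k=1}^n\cos(k\theta)\le \frac{1}{2\sin(\theta/2)}-\frac12$. For $\theta\in[\pi/n,\pi]$, the chord inequality \eqref{eq:sine-chord} with $a=\pi/2$ yields $\sin(\theta/2)\ge\theta/\pi\ge 1/n$. Hence the sum is at most $\frac{n-1}{2}$, which is too weak by itself.

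The main obstacle is the region $\theta$ close to $\pi/n$, where $\sin(\theta/2)$ is small. There I will use the numerator rather than bounding it by $1$.

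\emph{Small $\theta$.} Write $\theta=c\pi/n$. If $\theta\ge 5\pi/(2n)$, the crude bound gives $\frac{1}{2\sin(\theta/2)}-\frac12\le \frac{n}{2c}-\frac12\le \frac n5$ (using $\sin(\theta/2)\ge\theta/\pi=c/n$). So only $1\le c\le 5/2$ remains. In that range I compare the sum to the integral $\int_0^n\cos(k\theta)\,dk=\frac{\sin(n\theta)}{\theta}$. Equivalently, I use $\sin((n+\frac12)\theta)=\sin(c\pi+\theta/2)$ together with $2\sin(\theta/2)\ge \theta(1-\theta^2/24)$. This gives
\[
\sum_{k=1}^n\cos(k\theta)\le \frac{n}{\pi}\cdot\frac{\sin(c\pi)}{c}+O(1).
\]
The function $\max_{c\in[1,5/2]}\sin(c\pi)/(c\pi)$ is about $0.13<1/5$, attained near $c\approx 2.46$. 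The $O(1)$ error terms (from $\theta/2$ shifts and the $-1/2$) need care to absorb for small $n\ge3$. One option is to check small $n$ directly, for instance $n\le 10$ numerically or by hand. Another is to keep the error explicit: $|\sin(c\pi+\theta/2)-\sin c\pi|\le\theta/2$, which contributes at most $\frac{\theta/2}{2\sin(\theta/2)}\le \pi/4$, and combine this with the $-\tfrac12$.

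\emph{Large $\theta$.} For $\theta\ge 5\pi/(2n)$, and in particular for $\theta$ up to $\pi$, the crude bound above already suffices. Combining the two regimes gives $\sum\cos(k\theta)\le n/5$, hence $\sum_{k=1}^n\sin^2(k\theta/2)\ge \frac n2-\frac n{10}=\frac{2n}{5}$.

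The delicate step is the explicit constant bookkeeping for $c\in[1,5/2]$ with small $n$. If it proves tight, I would move the crossover point, for example to $c=3$, where the crude bound gives $n/6-1/2$, at the cost of verifying a slightly larger window.
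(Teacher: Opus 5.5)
\textbf{There is a genuine gap: the window $\pi/n\le\theta\le5\pi/(2n)$ is never actually proved.} Your reduction to $\sum_{k=1}^n\cos(k\theta)\le n/5$, the Dirichlet-kernel closed form, and the crude-bound treatment of $\theta\ge5\pi/(2n)$ are all correct. They are the same starting point as the paper, whose identity is exactly your closed form rewritten for $\sum\sin^2(k\theta/2)$.

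The comparison with $\sin(c\pi)/(c\pi)\approx0.128$ is only heuristic. The explicit bookkeeping you sketch does not close for $n=3$. Take $\theta=5\pi/6$, i.e.\ $c=5/2$:
\begin{itemize}
\item the factor $1/(1-\theta^2/24)$ is about $1.40$, so the main term is about $0.53$;
\item even with the exact value of $\frac{\theta/2}{2\sin(\theta/2)}$ instead of $\pi/4$, the correction $\frac{\theta/2}{2\sin(\theta/2)}-\frac12$ is about $0.18$;
\item the resulting upper bound is about $0.71$, which exceeds $n/5=0.6$.
\end{itemize}
The true value there is $\cos\theta+\cos2\theta+\cos3\theta\approx-0.37$. ``Checking small $n$ numerically'' is not a proof over a continuum of $\theta$ without a rigorous argument. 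For $n=3$, for instance, you could maximize the cubic $4x^3+2x^2-2x-1$ in $x=\cos\theta$ over $x\in[-1,1/2]$. Moving the crossover to $c=3$ goes the wrong way, since it enlarges the problematic window.

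\textbf{The missing observation is that no delicate estimate is needed.} Your crude bound is too weak near $\theta=2\pi/n$ only because $\sin(\theta/2)\ge\theta/\pi$ undershoots $\sin(\theta/2)$ by a factor of about $\pi/2$ for small $\theta$. The paper splits at $2\pi/n$ instead.
\begin{itemize}
\item On $[2\pi/n,\pi]$, use monotonicity of sine together with \eqref{eq:sine-chord} for $a=\pi/3$ (allowed because $\pi/n\le\pi/3$ for $n\ge3$). This gives $\sin(\theta/2)\ge\sin(\pi/n)\ge3\sqrt3/(2n)>5/(2n)$, so $\frac{1}{2\sin(\theta/2)}-\frac12<\frac n5$.
\item On $[\pi/n,2\pi/n]$, absorb the $-\tfrac12$ into the numerator and apply sum-to-product:
\[
\sum_{k=1}^n\cos(k\theta)=\frac{\sin\bigl((n+\tfrac12)\theta\bigr)-\sin(\theta/2)}{2\sin(\theta/2)}=\frac{\sin(n\theta/2)\cos\bigl((n+1)\theta/2\bigr)}{\sin(\theta/2)}.
\]
Since $n\theta/2\in[\pi/2,\pi]$ and $(n+1)\theta/2\in[\pi/2,\pi+\pi/n]\subset[\pi/2,3\pi/2]$, this is $\le0$.
\end{itemize}
These two cases give the lemma for all $n\ge3$ uniformly, with no small-$n$ verification.
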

	\begin{proof}
		By the trigonometric summation identity
		\cite[Eq.~(2.11)]{DJ23},
		\begin{equation}\label{DJ}
			\sum_{k=1}^{n}\sin^2(k\theta/2)
			=\frac n2+\frac14
			-\frac{\sin((2n+1)\theta/2)}{4\sin(\theta/2)}.
		\end{equation}
		For $\pi/n\le\theta\le2\pi/n$,  we have 
		$\sin(n\theta/2)\ge0$ and
		$\cos((n+1)\theta/2)\le0$, and
		\begin{align*}
			\sum_{k=1}^{n}\sin^2(k\theta/2)&=\frac n2+
			\frac{\sin(\theta/2)-\sin((2n+1)\theta/2)}{4\sin(\theta/2)}\\
			&=\frac n2
			-\frac{\sin((2n+1)\theta/2)-\sin(\theta/2)}{4\sin(\theta/2)}\\
			&=\frac n2-
			\frac{\sin(n\theta/2)\cos((n+1)\theta/2)}
			{2\sin(\theta/2)}
			\ge\frac n2>\frac{2n}{5}.
		\end{align*}
		For $2\pi/n\le\theta\le\pi$, 
		we have $\sin (\theta/2)\ge \sin(\pi/n)$.
		Taking $a=\pi/3$ in~\eqref{eq:sine-chord},
		we have $$\sin x\ge \frac{3\sqrt3}{2\pi}x, \qquad 0\le x\le \pi/3.$$
		Thus,
		\[
		\sin(\theta/2)\ge\sin(\pi/n)
		\ge\frac{3\sqrt3}{2n}>\frac{5}{2n}.
		\]
		Consequently, by~\eqref{DJ},
		\[
		\sum_{k=1}^{n}\sin^2(k\theta/2)
		\ge\frac n2+\frac14-\frac{1}{4\sin(\theta/2)}
		>\frac{n}{2}+\frac14-\frac{n}{10}=\frac{2n}{5}+\frac14>\frac{2n}{5}.
		\]
		This completes the proof.
	\end{proof}

	We now  establish an upper bound for the tail integral
	$J^{(2)}_{r,n}$.
	
	\begin{proposition}\label{lem:large-n-tail}
		For all integers $r\ge 21$ and $n\ge 12$, 
		\begin{equation*}
			J^{(2)}_{r,n}<\frac{3}{2r^3n^{9/2}}.
		\end{equation*}
	\end{proposition}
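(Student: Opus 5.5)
The plan is to combine Lemma~\ref{lem:power-envelope} with Lemma~\ref{lem:sine-square}. On the whole range $\pi/n\le\theta\le\pi/10$ (which lies inside $[\pi/n,\pi]$ since $n\ge12$), these two lemmas give the uniform bound
\[
|\varphi_{r,n}(\theta)|\le r^{-\sum_{k=1}^n\sin^2(k\theta/2)}\le r^{-2n/5}.
\]
Then
\[
J^{(2)}_{r,n}\le r^{-2n/5}\int_{\pi/n}^{\pi/10}\theta^2\,d\theta\le \frac{(\pi/10)^3}{3}\,r^{-2n/5}<\frac{\pi^3}{3000}\,r^{-2n/5}.
\]
It remains to show that $\frac{\pi^3}{3000}r^{-2n/5}<\frac{3}{2r^3n^{9/2}}$. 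Equivalently, we need
\[
\frac{\pi^3}{2000}\,n^{9/2}<\frac32\cdot\frac{3}{\pi^3}\cdot\pi^3\cdot\frac{r^{2n/5-3}}{3},
\]
or, more simply, $\frac{2\pi^3}{9000}\,n^{9/2}\,r^{3-2n/5}<1$.

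For $n\ge12$ the exponent satisfies $2n/5-3\ge 9/5>0$, so the left side decreases in $r$. It therefore suffices to take $r=21$ and check that
\[
h(n):=\frac{2\pi^3}{9000}\,n^{9/2}\,21^{3-2n/5}<1\qquad (n\ge12).
\]
To handle all $n$, we look at the ratio
\[
\frac{h(n+1)}{h(n)}=\Bigl(1+\tfrac1n\Bigr)^{9/2}21^{-2/5}.
\]
For $n\ge12$ this is at most $(13/12)^{9/2}/21^{2/5}\approx 1.43/3.38<1$, so $h$ is decreasing. It is then enough to verify $h(12)<1$. Numerically $12^{9/2}\approx 7.18\times10^{4}$, $21^{-9/5}\approx 4.18\times10^{-3}$, and $2\pi^3/9000\approx 6.9\times10^{-3}$, which gives $h(12)\approx 2.1\times10^{-3}$. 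This is comfortably less than $1$.

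I expect no real obstacle, since Lemma~\ref{lem:sine-square} already supplies all the decay. The only care needed is confirming that the interval $[\pi/n,\pi/10]$ is nonempty and inside the range where Lemma~\ref{lem:sine-square} applies, and getting the monotonicity in both $r$ and $n$ right. If the slack at $n=12$ were ever insufficient, a fallback would be to keep the exact integral $\frac13\bigl((\pi/10)^3-(\pi/n)^3\bigr)$, but the numbers above suggest this is unnecessary.
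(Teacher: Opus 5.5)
There is a genuine gap: the final numerical check is wrong, and the inequality $h(12)<1$ you need is false. Your strategy is otherwise the paper's: $|\varphi_{r,n}|\le r^{-2n/5}$ from Lemmas~\ref{lem:power-envelope} and~\ref{lem:sine-square}, integrate $\theta^2$, reduce to $r=21$, use monotonicity in $n$, and check $n=12$.

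With your own numbers,
\[
6.9\times10^{-3}\cdot 7.18\times10^{4}\cdot 4.18\times10^{-3}\approx 2.07,
\]
not $2.1\times10^{-3}$; you lost a factor of $10^3$ in the powers of ten. So $h(12)\approx 2.06>1$. Equivalently, your crude bound only gives
\[
r^3n^{9/2}J^{(2)}_{r,n}\le \frac{\pi^3}{3000}\,12^{9/2}\,21^{-9/5}\approx 3.1
\]
at $n=12$, $r=21$, which exceeds $3/2$. Replacing $\int_{\pi/n}^{\pi/10}\theta^2\,d\theta$ by $(\pi/10)^3/3$ costs a factor $\frac{1/1000}{1/1000-1/1728}=\frac{1728}{728}\approx 2.37$ at $n=12$, and that is more than the available slack. (Separately, your intermediate display has $\pi^3/2000$ where $\pi^3/3000$ is meant, though your final form $\frac{2\pi^3}{9000}n^{9/2}r^{3-2n/5}<1$ is correct.)

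The repair is your own fallback, and it is exactly what the paper does: keep $\int_{\pi/n}^{\pi/10}\theta^2\,d\theta=\frac{\pi^3}{3}\bigl(10^{-3}-n^{-3}\bigr)$. At $n=12$ this gives
\[
\frac{\pi^3}{3}\cdot 12^{9/2}\cdot\frac{728}{1728000}\cdot 21^{-9/5}\approx 1.30<\frac32.
\]
The paper certifies this rigorously using $\pi<22/7$, $\sqrt{12}<7/2$ and $21^{1/5}<2$.

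Your ratio argument then does not apply verbatim, because the factor $10^{-3}-n^{-3}$ increases with $n$. The paper instead shows, for $n\ge12$,
\[
\frac{d}{dn}\log M(n)=\frac{3}{2n}+\frac{3n^2}{n^3-1000}-\frac25\log 21<0 .
\]
Alternatively, keep your crude argument for $n\ge13$ and treat $n=12$ alone with the exact integral. For $n\ge 13$, $h(13)\approx 0.88<1$, and your bound $h(n+1)/h(n)\le (13/12)^{9/2}21^{-2/5}<1$ shows $h$ is decreasing.
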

	\begin{proof}
		Lemmas~\ref{lem:power-envelope} and~\ref{lem:sine-square} give
		\[
		|\psi_r(x)|\le r^{-\sin^2(x/2)},
		\qquad
		\sum_{k=1}^{n}\sin^2(k\theta/2)\ge\frac{2n}{5}
		\quad\left(\frac{\pi}{n}\le\theta\le\pi\right).
		\]
		Therefore
		\[
		|\varphi_{r,n}(\theta)|\le r^{-2n/5}
		\quad\left(\frac{\pi}{n}\le\theta\le\pi\right).
		\]
		Since \(3-2n/5<0\), it follows that
		\begin{align*}
			r^3n^{9/2}J^{(2)}_{r,n}
			&\le
			n^{9/2}r^{3-2n/5}
			\int_{\pi/n}^{\pi/10}\theta^2\,d\theta\\
			&=n^{9/2}r^{3-2n/5} \frac{1}{3}\left( \frac{\pi^3}{1000}-\frac{\pi^3}{n^3}\right)    \\
			&\le
			\frac{\pi^3}{3}n^{9/2}
			\left(\frac1{1000}-\frac1{n^3}\right)
			21^{3-2n/5}
			=:M(n).
		\end{align*}
		Regarding \(n\ge12\) as a real variable, we have
		\[
		\frac{d}{dn}\log M(n)
		=
		\frac{3}{2n}+\frac{3n^2}{n^3-1000}
		-\frac25\log21.
		\]
		The first two terms decrease with \(n\); in particular,
		\[
		\frac{d}{dn}\log M(n)
		\le
		\frac{523}{728}-\frac25\log21<0.
		\]
		Thus \(M(n)\le M(12)\).
		Using \(\pi<22/7\), \(\sqrt{12}<7/2\), and \(21^{1/5}<2\),
		we obtain
		\[
		M(12)
		<
		\frac{(22/7)^3}{3}\cdot
		12^4\cdot\frac72
		\left(\frac1{1000}-\frac1{12^3}\right)
		\frac{2}{21^2}
		<\frac32.
		\]
		This completes the proof.
	\end{proof}

	\subsubsection{An upper bound for the tail term $J^{(3)}_{r,n}$}
	In this subsection, we establish a uniform upper bound for $J^{(3)}_{r,n}$
    for all integers $r\ge21$.
	We use a  Voronoi partition of $[0,\pi]$,
	whose sites are the angles in $[0,\pi]$ corresponding to roots
	of unity of order at most $n$; see \cite{Aur91}
	for background on Voronoi diagrams.

	First we consider the case $n=12$.
	Let
	\[
	\mathcal R
	=
	\left\{
	\frac{a}{d}:
	a,d\in\mathbb Z,\quad
	0\le \frac{a}{d}\le \frac12,\quad
	1\le d\le 12,\quad
	\gcd(a,d)=1
	\right\},
	\]
	where 0 is represented by $0/1$.
	The set \(\mathcal R\) consists of the following \(24\) fractions,
	listed in increasing order:
	\[
	\mathcal R=
	\left\{
		0,\frac1{12},\frac1{11},\frac1{10},
		\frac19,\frac18,\frac17,\frac16,
		\frac2{11},\frac15,\frac29,\frac14,
		\frac3{11},\frac27,\frac3{10},\frac13,
	\frac4{11},\frac38,\frac25,\frac5{12},
		\frac37,\frac49,\frac5{11},\frac12
	\right\}.
	\]
    See \cite[Chapter~4, Exercise~20, p.~145]{Pollack2009}.
	
	For each nonzero \(0\ne\alpha\in\mathcal R\), let \(\alpha^{-}\)
	denote its predecessor in \(\mathcal R\), 
	and for each  \(\alpha<1/2\), let \(\alpha^{+}\) denote its successor. 
	Let $\alpha_j$ ($0\le j\le 23$) be the $j$-th fraction in $\mathcal{R}$.  
	Define
	\[
	\ell_{\alpha}
	=
	\begin{cases}
		0, & j=0,\\[1mm]
		\dfrac{\alpha^-+\alpha}{2},
		& 1\le j\le 23,
	\end{cases}
	\qquad
	h_{\alpha}
	=
	\begin{cases}
		\dfrac{\alpha+\alpha^+}{2},
		& 0\le j<23,\\[1mm]
		\dfrac12, & j=23.
	\end{cases}
	\]
    We call
	\[
	V_{\alpha_j}=[2\pi\ell_{\alpha_j},\,2\pi h_{\alpha_j}], \qquad j=1,2,\ldots,23
	\]
	the Voronoi cells.
	Note that $V_{\alpha_j}$ for $1\le j\le 23$ 
	partition $[\pi/12,\pi].$

Recall that $$J^{(3)}_{r,12}=\int_{\pi/10}^\pi\theta^2|\varphi_{r,12}(\theta)|\,d\theta;\quad \varphi_{r,12}(\theta)=\prod_{k=1}^{12}\psi_r(k\theta);\quad \psi_r(k\theta)
=\frac{\sin(rk\theta/2)}{r\sin(k\theta/2)}, \,\,\,(k\theta\notin 2\pi\mathbb{Z}).$$
We decompose the integral by $V_{\alpha}$ as follows:
\begin{align*}\label{J3}
J^{(3)}_{r,12}
&\le 
\int_{\pi/12}^{\pi}
\theta^2|\varphi_{r,12}(\theta)|\,d\theta\\
&=
\sum_{j=1}^{23}
\int_{V_{\alpha_j}}
\theta^2|\varphi_{r,12}(\theta)|\,d\theta\\
&\le \sum_{j=1}^{23} (2\pi h_{\alpha_j})^2
\int_{V_{\alpha_j}}
|\varphi_{r,12}(\theta)|\,d\theta
\end{align*}
We will estimate the integral in every $V_{\alpha_j}$ for  $1\le j\le 23$.
Note that for $0 \le \theta \le \pi$, every zero of $\sin(k\theta/2)$,
with $1\le k\le 12$, is of the form $2\pi\alpha$
for some $\alpha\in\mathcal{R}$.

	\begin{lemma}\label{lem:rational-cell-estimate}
	For every $\alpha=a/d\in\mathcal R$ ($\alpha$ is in lowest terms),  $\alpha\ne 0,$ and $\theta\in V_\alpha$,
	let $m=\lfloor 12/d\rfloor$. Define
	\[
	\Delta_\alpha=
	\min\left\{
	\prod_{\substack{1\le k\le 12\\d\nmid k}}
	|\sin(\pi k\ell_\alpha)|,
	\prod_{\substack{1\le k\le 12\\d\nmid k}}
	|\sin(\pi k h_\alpha)|
	\right\}.
	\]
	Then $\Delta_\alpha>0$, and 
	\begin{equation}\label{eq:rational-cell-estimate}
		|\varphi_{r,12}(\theta)|
		\le \frac{r^{-(12-m)}}{\Delta_\alpha}
		\prod_{j=1}^{m}
		\min\left\{1,\frac{\pi}{rjd|\theta-2\pi\alpha|}\right\}, \qquad \theta\in V_{\alpha},\theta\ne 2\pi\alpha.
	\end{equation}
	Moreover,
	\begin{align}\label{eq:inte-lem}
		\int_{V_\alpha}|\varphi_{r,12}(\theta)|\,d\theta
		\le 
		\begin{cases}
			\displaystyle	\frac{2\pi r^{-12}}{d\Delta_\alpha}(1+\log r), & m=1,\\
			\displaystyle	\frac{\epsilon_\alpha\pi r^{-13+m}}{d\Delta_\alpha}\int_0^\infty \prod_{j=1}^{m}
			\min\left\{1,\frac{1}{jx}\right\}\,dx, & m\ge2.
		\end{cases}
	\end{align}
 where $$\epsilon_\alpha=\begin{cases}
 	2,\qquad & \alpha\ne \alpha_{23};\\
 	1,\qquad & \alpha=\alpha_{23}.
 	\end{cases}$$
\end{lemma}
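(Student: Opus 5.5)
The plan is to split the factors of $\varphi_{r,12}(\theta)=\prod_{k=1}^{12}\psi_r(k\theta)$ according to whether $d\mid k$. Here $\alpha=a/d$ is the site of the cell and $m=\lfloor 12/d\rfloor$.

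\emph{Factors with $d\nmid k$.} For these we use \eqref{eq:psi-sin} together with $|\sin(rk\theta/2)|\le1$, which gives $|\psi_r(k\theta)|\le 1/(r|\sin(k\theta/2)|)$.

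The key claim is that $|\sin(k\theta/2)|$ has no zero inside $V_\alpha$ when $d\nmid k$. Every zero of $\sin(k\theta/2)$ in $[0,\pi]$ has the form $2\pi\beta$ with $\beta\in\mathcal R$. A zero at $2\pi\alpha$ itself would require $d\mid k$. Any other site $\beta\ne\alpha$ lies outside the open Voronoi cell, because each cell lies strictly between the neighbouring sites.

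Next, the function $\theta\mapsto|\sin(k\theta/2)|$ is concave on each interval between consecutive zeros. On $V_\alpha$ this interval is a single one containing $V_\alpha$, so the minimum over $V_\alpha=[2\pi\ell_\alpha,2\pi h_\alpha]$ is attained at an endpoint. Writing $k\theta/2=\pi k\ell_\alpha$ or $\pi k h_\alpha$, it follows that the product over $d\nmid k$ of $|\sin(k\theta/2)|$ is at least $\Delta_\alpha$.

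One point needs care: $\prod_k\min$ is not the same as $\min\prod$. I would handle this by noting that the product of the $|\sin|$ factors is log-concave on the common zero-free interval. A product of concave positive functions is log-concave, so the product is minimized at an endpoint, which gives exactly $\Delta_\alpha$. The same argument shows $\Delta_\alpha>0$, since the endpoints $\ell_\alpha,h_\alpha$ are midpoints (or $1/2$) and are never zeros. Indeed, $h_{\alpha_{23}}=1/2$ is a site, but $d\nmid k$ with $d=2$ means $k$ is odd, and then $\sin(\pi k/2)\ne0$. There are $12-m$ such factors, which produces $r^{-(12-m)}/\Delta_\alpha$.

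\emph{Factors with $k=jd$, $1\le j\le m$.} We keep the bound $|\psi_r|\le1$. In addition, since $jd\alpha=ja\in\mathbb Z$, we can write $k\theta/2=\pi ja+jd(\theta-2\pi\alpha)/2$. Hence $|\sin(rk\theta/2)|\le 1$ while $|\sin(k\theta/2)|=|\sin(jd\,\delta/2)|$ with $\delta=|\theta-2\pi\alpha|$.

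We need $jd\delta/2\le\pi/2$, so that \eqref{eq:sine-chord} with $a=\pi/2$ gives $|\sin(jd\delta/2)|\ge jd\delta/\pi$. This holds because $\delta$ is at most half the gap to a neighbouring site, and that gap is at most $1/(dd')\cdot2\pi$. So $jd\delta\le 2\pi j/(2d')\le\pi$, using $j\le m\le 12/d$ and a Farey-type spacing argument $d'\ge$ something. If that crude bound fails, I would fall back on checking the finite list of 23 cells directly. Either way, $|\psi_r(jd\theta)|\le\min\{1,\pi/(rjd\delta)\}$, which proves \eqref{eq:rational-cell-estimate}.

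\emph{The integral bounds \eqref{eq:inte-lem}.} Integrate \eqref{eq:rational-cell-estimate} over $V_\alpha$, extending each side of $2\pi\alpha$ to infinity. Interior cells have two sides, giving the factor $\epsilon_\alpha=2$. The last cell, $\alpha_{23}=1/2$, has its site at the endpoint $\pi$, so only one side contributes and $\epsilon_\alpha=1$.

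For $m\ge2$, substitute $\delta=\pi x/(rd)$. This gives $d\delta=(\pi/(rd))\,dx$ and the integrand $\prod_j\min\{1,1/(jx)\}$, which is integrable because it decays like $x^{-m}$. The result is the factor $\epsilon_\alpha\pi r^{-13+m}/(d\Delta_\alpha)$.

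For $m=1$ the integrand is $\min\{1,1/x\}$, which is not integrable on $[0,\infty)$, so we truncate. The half-width of $V_\alpha$ is at most $\pi$, so $x\le rd$. Then $\int_0^{rd}\min\{1,1/x\}\,dx=1+\log(rd)$. To get $1+\log r$ instead, I would use the sharper bound on the cell half-width, $\delta\le\pi/d$, which gives $x\le r$ and hence $2\pi r^{-12}(1+\log r)/(d\Delta_\alpha)$.

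The main obstacle is the geometric step: showing that the cell is zero-free for $d\nmid k$ (so that the minimization reduces to the endpoints) and that the sine bound from \eqref{eq:sine-chord} applies throughout the cell for $k=jd$. This may simply be verified across the 23 cells.
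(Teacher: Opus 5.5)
Your proposal is correct and follows essentially the same route as the paper. You split off the factors with $d\nmid k$ and bound them by $r^{-(12-m)}/\Delta_\alpha$ via log-concavity of the product of $|\sin(k\theta/2)|$ on the zero-free cell, use $|\sin(jd\delta/2)|\ge jd\delta/\pi$ for the factors $k=jd$, and integrate with $x=rd\delta/\pi$ on each side of $2\pi\alpha$.

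The step you left vague ($jd\delta\le\pi$, and likewise $x\le r$ for $m=1$) closes most simply as in the paper. Since $0,\tfrac1{12},\dots,\tfrac6{12}\in\mathcal R$, consecutive sites are at most $1/12$ apart, so $\delta\le\pi/12$. Then $jd\le 12$ gives $jd\delta\le\pi$, and $d\le 12$ gives $x\le rd/12\le r$. Your Farey route also works: neighbours in $\mathcal R$ satisfy $d+d'\ge 13$, so $d'\ge 13-d\ge\lfloor 12/d\rfloor\ge j$.
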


\begin{proof}
	Fix $\alpha=a/d\in\mathcal{R}\setminus\{0\}$ ($\alpha$ is in lowest terms) and
	let $m=\lfloor 12/d\rfloor$.
	Consider $\varphi_{r,12}(\theta)$ on $\theta\in[2\pi \ell_\alpha,2\pi h_\alpha].$
	We split the product of $\varphi_{r,12}(\theta)$ as follows:
	\[
	\varphi_{r,12}(\theta)=\prod_{\substack{1\le k\le 12\\d\nmid k}}\psi_r(k\theta)\cdot
	\prod_{j=1}^{m}\psi_r(jd\theta).
	\]

	\medskip
	\noindent
	\textbf{The case  $d\nmid k$.}  
	Then for $\theta=2\pi\alpha=2\pi a/d$,
	$$
	\sin(k\theta/2)=\sin(\pi ak/d)\ne 0
	$$
	since $\gcd(a,d)=1$ and $d\nmid k$.
	Clearly, $2\pi\alpha_j\notin V_{\alpha}$ for any $\alpha_j\ne \alpha$.
	Thus $\sin(k\theta/2)\ne 0$ for all $\theta\in V_\alpha.$
	Thus $\Delta_\alpha>0.$

	Since $\sin(k\theta/2)\ne 0$ for $\theta\in V_{\alpha},$
	$$
	\prod_{\substack{1\le k\le 12\\d\nmid k}}
	|\psi_r(k\theta)|=	\prod_{\substack{1\le k\le 12\\d\nmid k}} \frac{|\sin(rk\theta/2)|}{|r\sin(k\theta/2)|}\le \prod_{\substack{1\le k\le 12\\d\nmid k}} \frac{1}{|r\sin(k\theta/2)|}.
	$$
	Now we give an upper bound for the right-hand side.
    Note that
	\[
	\frac{d^2}{d\theta^2}\log|\sin(k\theta/2)|
	=-\frac{k^2}{4\sin^2(k\theta/2)}<0.
	\]
	Let
	$$
	G(\theta)=\sum_{\substack{1\le k\le 12\\d\nmid k}}
	\log |\sin( k\theta/2)|=\log\left(\prod_{\substack{1\le k\le 12\\d\nmid k}}
	|\sin( k\theta/2)|\right).
	$$
	Then we have 
	$$
	G''(\theta)=-\sum_{\substack{1\le k\le 12\\d\nmid k}}\frac{k^2}{4\sin^2(k\theta/2)}<0.
	$$
	Thus $G(\theta)$ is 
	concave. Jensen's inequality gives
	\[
	\prod_{\substack{1\le k\le 12\\d\nmid k}}
	|\sin(k\theta/2)|
	\ge\Delta_\alpha>0.
	\]
	Thus, we obtain that
	\begin{equation}\label{knd}
		\prod_{\substack{1\le k\le 12\\d\nmid k}}
		|\psi_r(k\theta)|=	\prod_{\substack{1\le k\le 12\\d\nmid k}} \frac{|\sin(rk\theta/2)|}{|r\sin(k\theta/2)|}
		\le \frac{r^{-(12-m)}}{\Delta_\alpha}.
	\end{equation}
	
	\medskip
	\noindent
	\textbf{The case  $d|k$.}  
	For an index $k=jd$, the identity
	$
	jd\theta=2\pi ja+jd(\theta-2\pi\alpha)
	$ implies
	\[
	|\psi_r(jd\theta)|
	=|\psi_r(jd(\theta-2\pi\alpha))|.
	\]
	Since $0,1/12, 2/12,\ldots,6/12 \in \mathcal{R}$,
	two consecutive fractions $\alpha_j,\alpha_{j+1}$ in $\mathcal R$ differ
	by at most $1/12$.
	Hence, every $\theta\in V_{\alpha}$ satisfies
	$|\theta-2\pi\alpha|\le\pi/12$. 
	Since $jd=k\le 12$, we obtain
	\[
	\frac{jd|\theta-2\pi\alpha|}{2}\le\frac{\pi}{2}.
	\]
	Taking $a=\pi/2$ in~\eqref{eq:sine-chord},
	we have $\sin x\ge {2x}/{\pi}$ and 
	\[
	\left|\sin\frac{jd(\theta-2\pi\alpha)}{2}\right|
	\ge\frac{jd|\theta-2\pi\alpha|}{\pi}.
	\]
	This gives, for
	$\theta\ne2\pi\alpha$,
	\begin{align*}
		|\psi_r(jd\theta)|&=|\psi_r(jd(\theta-2\pi\alpha))|=\frac{|\sin(rjd(\theta-2\pi\alpha)/2)|}{r|\sin(jd(\theta-2\pi\alpha)/2)|}\\
		&\le \frac{1}{r|\sin(jd(\theta-2\pi\alpha)/2)|}\le
		\frac{\pi}{rjd|\theta-2\pi\alpha|}.
	\end{align*}
	Together with $|\psi_r|\le1$, we obtain 
	$$
	|\psi_r(jd\theta)|\le \min\left\{1,\frac{\pi}{rjd|\theta-2\pi\alpha|}\right\}.
	$$
	Combining with~\eqref{knd}, for $\theta\in V_{\alpha}$ and $\theta\ne 2\pi\alpha,$
	$$
	|\varphi_{r,12}(\theta)|=\prod_{\substack{1\le k\le 12\\d\nmid k}}|\psi_r(k\theta)|\cdot
	\prod_{j=1}^{m}|\psi_r(jd\theta)|\le \frac{r^{-(12-m)}}{\Delta_\alpha}
	\prod_{j=1}^{m}
	\min\left\{1,\frac{\pi}{rjd|\theta-2\pi\alpha|}\right\}.
	$$
	This proves \eqref{eq:rational-cell-estimate}.

	\medskip
	\noindent
	\textbf{Proof of~\eqref{eq:inte-lem}.}  
	We first decompose $V_{\alpha}=[2\pi\ell_{\alpha},2\pi h_{\alpha}]=[2\pi\ell_{\alpha}, 2\pi\alpha]
	\cup (2\pi{\alpha},2\pi h_{\alpha}]$.
	Let 
	$$
	L_1=\int_{2\pi\ell_{\alpha}}^{2\pi\alpha} |\varphi_{r,12}(\theta)|  \,d\theta,
	\qquad L_2=\int_{2\pi\alpha}^{2\pi h_{\alpha}} |\varphi_{r,12}(\theta)|  \,d\theta.
	$$
	Then  $\int_{V_{\alpha}}|\varphi_{r,12}(\theta)| \,d\theta=L_1+L_2.$
	By~\eqref{eq:rational-cell-estimate},
	\[
	\begin{aligned}
		L_1
		&\le
		\frac{r^{-(12-m)}}{\Delta_\alpha}
		\int_{2\pi\ell_\alpha}^{2\pi\alpha}
		\prod_{j=1}^{m}
		\min\left\{
		1,\frac{\pi}{rjd|\theta-2\pi\alpha|}
		\right\}\,d\theta\\
		&=\frac{r^{-(12-m)}}{\Delta_\alpha}
		\int_{2\pi\ell_\alpha}^{2\pi\alpha}
		\prod_{j=1}^{m}
		\min\left\{
		1,\frac{\pi}{rjd(2\pi\alpha-\theta)}
		\right\}\,d\theta
	\end{aligned}
	\]
	where each minimum equals $1$ at $\theta=2\pi\alpha$.
	Substituting
	$
	x={rd(2\pi\alpha-\theta)}/{\pi},
	$
	we obtain
	\[
	L_1
	\le
	\frac{\pi r^{-13+m}}{d\Delta_\alpha}
	\int_0^{2rd(\alpha-\ell_\alpha)}\prod_{j=1}^{m}\min\left\{1,\frac{1}{jx}\right\}\,dx.
	\]
	Since $2\pi(\alpha-\ell_\alpha)\le\pi/12$ and $d\le 12$, we have
	\[
	0\le 2rd(\alpha-\ell_\alpha)\le\frac{rd}{12}\le r.
	\]
	It follows that
	\[
	L_1
	\le
	\frac{\pi r^{^{-13+m}}}{d\Delta_\alpha}
	\int_0^r\prod_{j=1}^{m}\min\left\{1,\frac{1}{jx}\right\}\,dx.
	\]
	Similarly, on $(2\pi\alpha,2\pi h_\alpha]$, we have
	\[
	\begin{aligned}
		L_2\le
		\frac{\pi r^{^{-13+m}}}{d\Delta_\alpha}
		\int_0^r\prod_{j=1}^{m}\min\left\{1,\frac{1}{jx}\right\}\,dx.
	\end{aligned}
	\]
	For $\alpha=\alpha_{23}=1/2,$ the right subinterval is empty and $L_2=0$.
	Consequently,
	\[
	\int_{V_\alpha}|\varphi_{r,12}(\theta)|\,d\theta
	=L_1+L_2
	\le
	\frac{\epsilon_\alpha\pi r^{-(13-m)}}{d\Delta_\alpha}
	\int_0^r\prod_{j=1}^{m}\min\left\{1,\frac{1}{jx}\right\}\,dx.
	\]
	
	If $m\ge2$, then $\min\left\{1,\frac{1}{jx}\right\}\le1$ on $x\in[0,1]$, while
	\[
	\prod_{j=1}^{m}\min\left\{1,\frac{1}{jx}\right\}
	=\prod_{j=1}^{m}\frac{1}{jx}
	=\frac{1}{m!x^m},
	\qquad x\ge1.
	\]
	Hence
	\[
	\int_0^\infty \prod_{j=1}^{m}\min\left\{1,\frac{1}{jx}\right\}\,dx
	\le
	1+\frac{1}{m!}\int_1^\infty x^{-m}\,dx
	=
	1+\frac{1}{m!(m-1)}
	<\infty.
	\]
	It follows that
	\[
	\int_{V_\alpha}|\varphi_{r,12}(\theta)|\,d\theta\le \frac{\epsilon_\alpha\pi r^{-(12-m)}}{rd\Delta_\alpha}
	\int_0^{+\infty}\prod_{j=1}^{m}\min\left\{1,\frac{1}{jx}\right\}\,dx.
	\]
	If $m=1$, then, since $r\ge 3$,
	\[
	\int_0^r\prod_{j=1}^{1}\min\left\{1,\frac{1}{jx}\right\}\,dx
	=
	\int_0^1 1\,dx+\int_1^r\frac{dx}{x}
	=
	1+\log r.
	\]
	This completes the proof.
\end{proof}

\medskip
\noindent
\textbf{A rational lower bound for $\Delta_\alpha$.}
We first obtain a positive lower bound for $|\sin(\pi x)|$
when $x\in\mathbb R\setminus\mathbb Z$.
Define
\[
\delta(x):=\operatorname{dist}(x,\mathbb Z)
=\min_{j\in\mathbb Z}|x-j|.
\]
Then we have
\[
0<\delta(x)\le\frac12,
\qquad
|\sin(\pi x)|=\sin\bigl(\pi\delta(x)\bigr).
\]
Consequently, it suffices to estimate sine at an argument
in $(0,\pi/2]$.

For $x\in\mathbb R\setminus\mathbb Z$, let
\[
y(x):=\frac{31\delta(x)}{10}<\pi \delta(x),
\qquad
s(x):=
y(x)-\frac{y(x)^3}{6}+\frac{y(x)^5}{120}-\frac{y(x)^7}{5040}.
\]
Then
\[
0<y(x)\le 31/20<2,\qquad 
y(x)<\pi\delta(x)\le\frac{\pi}{2}.
\]
In particular,
\[
s(x)
=
y(x)\left(1-\frac{y(x)^2}{6}\right)
+
\frac{y(x)^5}{120}\left(1-\frac{y(x)^2}{42}\right)
>0.
\]

For \(0<y(x)\le31/20\), let
$
a_k={y(x)^{2k+1}}/{(2k+1)!}.
$
The terms \(a_k\) are decreasing since
\[
\frac{a_{k+1}}{a_k}
=\frac{y(x)^2}{(2k+2)(2k+3)}
\le\frac{y(x)^2}{6}
\le\frac{(31/20)^2}{6}
=\frac{961}{2400}<1.
\]
and
$
a_k\le a_0\left(\frac{961}{2400}\right)^k
\longrightarrow0.
$
Moreover, the Taylor expansion of \(\sin y(x)\) gives
\[
\sin(y(x))-s(x)
=\frac{y(x)^9}{9!}-\frac{y(x)^{11}}{11!}
+\frac{y(x)^{13}}{13!}-\cdots.
\]
This is an alternating series whose term magnitudes decrease to zero.
Since its first term is positive, the alternating-series remainder
estimate yields
\[
0\le\sin(y(x))-s(x)\le\frac{y(x)^9}{9!}.
\]
We conclude that
\begin{equation}\label{eq:rational-sine-lower}
	0<s(x)\le\sin(y(x))
	\le\sin\bigl(\pi\delta(x)\bigr)
	=|\sin(\pi x)|.
\end{equation}

Fix a nonzero $\alpha=a/d\in\mathcal R$ and put $m=\lfloor 12/d\rfloor$.
Define \[
\widetilde \Delta_\alpha=
\min\left\{
\prod_{\substack{1\le k\le12\\d\nmid k}}s(k\ell_\alpha),
\quad
\prod_{\substack{1\le k\le12\\d\nmid k}}s(kh_\alpha)
\right\}.
\]
Recall that 
\[
\Delta_\alpha=
\min\left\{
\prod_{\substack{1\le k\le 12\\d\nmid k}}
|\sin(\pi k\ell_\alpha)|,
\prod_{\substack{1\le k\le 12\\d\nmid k}}
|\sin(\pi k h_\alpha)|
\right\}.
\]
By
\eqref{eq:rational-sine-lower}, we have 
$
0<\widetilde\Delta_\alpha\le\Delta_\alpha.
$

\begin{lemma}\label{lem:fixed-twelve}
	For every integer \(r\ge21\),
	\[
	r^3\,12^{9/2}
	\int_{\pi/12}^{\pi}
	\theta^2|\varphi_{r,12}(\theta)|\,d\theta
	<4.
	\]
\end{lemma}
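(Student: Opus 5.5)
We use the Voronoi decomposition set up just before the statement. Since the cells $V_{\alpha_j}$, $1\le j\le 23$, partition $[\pi/12,\pi]$ and $\theta\le 2\pi h_{\alpha_j}$ on $V_{\alpha_j}$, we have
\[
\int_{\pi/12}^{\pi}\theta^2|\varphi_{r,12}(\theta)|\,d\theta
\le\sum_{j=1}^{23}(2\pi h_{\alpha_j})^2\int_{V_{\alpha_j}}|\varphi_{r,12}(\theta)|\,d\theta .
\]
Each cell integral is then bounded by Lemma~\ref{lem:rational-cell-estimate}, with $\Delta_\alpha$ replaced by the smaller rational quantity $\widetilde\Delta_\alpha$. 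This replacement only enlarges the bound, by \eqref{eq:rational-sine-lower}.

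The next step is to make the $r$-dependence explicit. For a denominator $d$ we have $m=\lfloor 12/d\rfloor$.

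When $m\ge2$, i.e.\ $d\le 6$, the cell contributes at most a constant times $r^{-13+m}$. After multiplying by $r^3$, this becomes a constant times $r^{m-10}$. The worst case is $d=1$; but $\alpha=0$ is excluded, so in fact $d\ge2$ and $m\le6$. Hence every such exponent satisfies $m-10\le -4$.

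When $m=1$, i.e.\ $7\le d\le 12$, the contribution is at most $2\pi r^{-12}(1+\log r)/(d\widetilde\Delta_\alpha)$. After multiplying by $r^3$, this is $O(r^{-9}\log r)$.

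The constants $C_m=\int_0^\infty\prod_{j=1}^m\min\{1,1/(jx)\}\,dx$ are computed exactly, or simply bounded by $1+1/(m!(m-1))$ as in the proof of Lemma~\ref{lem:rational-cell-estimate}. Every term is then a rational coefficient times a decreasing function of $r$: either $r^{m-10}$, or $r^{-9}(1+\log r)$, which decreases for $r\ge21$. So it suffices to evaluate the whole sum at $r=21$ and check that it is below $4\cdot 12^{-9/2}$. For this we bound $\pi<22/7$ and $\log 21<31/10$, and compute $\ell_\alpha,h_\alpha$ and $\widetilde\Delta_\alpha$ exactly in rational arithmetic, in the same way as the Maple checks for Proposition~\ref{sc-B}.

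The main obstacle is the size of $1/\widetilde\Delta_\alpha$ for the small denominators. Take $\alpha=1/2$ as an example. Here $d=2$, and $\widetilde\Delta_\alpha$ is a product of six sines evaluated at the cell endpoints $h=1/2$ and $\ell=(5/11+1/2)/2$. Some of these sines can be fairly small, for instance when $k\ell$ is near an integer. The saving factor $r^{m-10}=21^{-4}$ in the $d=2$ cells must absorb both $1/\widetilde\Delta_\alpha$ and $12^{9/2}\approx 7.2\times10^4$. Since $21^4\approx 1.9\times 10^5$, there is only modest room, and the inequality is likely to hold but not by much.

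If the crude estimate fails for some cells, we would first refine those cells. One option is to split each half-cell $[2\pi\ell_\alpha,2\pi\alpha]$ into a few pieces and apply the concavity/Jensen bound for $\Delta$ on each piece separately, which gives larger sine products away from the neighbouring sites. Another option is to keep the factor $\min\{1,\pi/(rjd|\theta-2\pi\alpha|)\}$ only up to the actual cell length, rather than integrating to $\infty$. If neither suffices, we would use the genuine upper limit $2rd(\alpha-\ell_\alpha)$ in the $m=1$ case in place of $r$.
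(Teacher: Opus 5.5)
Your plan is essentially the paper's proof: the Voronoi cells, Lemma~\ref{lem:rational-cell-estimate} with $\Delta_\alpha$ replaced by $\widetilde\Delta_\alpha$, monotonicity in $r$ to reduce to $r=21$, and an exact rational check. The paper packages this as constants $C_\alpha$ and verifies $12^{9/2}\sum_{\alpha\ne0}C_\alpha<4$ by computer. As you anticipated, the $\alpha=1/2$ cell alone contributes about $3.79$.

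One caution: your fallback of bounding $\int_0^\infty\prod_{j=1}^m\min\{1,1/(jx)\}\,dx$ crudely by $1+1/(m!(m-1))$ would fail. For $m=6$ that bound is about $3.5$ times the true value $\approx 0.283$, which would push the $\alpha=1/2$ cell alone to roughly $13$. You need the piecewise evaluation obtained by splitting at $1/m,\dots,1$; the paper uses the slightly weaker rational bound $H_m$, for instance $H_6=12337/43200$.
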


\begin{proof}
	We first give rational upper bounds for the auxiliary integrals
	in Lemma~4.8.
		
	Fix \(\alpha=a/d\in\mathcal R\setminus\{0\}\) in lowest terms
	and put \(m=\lfloor12/d\rfloor\).
	Since \(d\ge2\), we have \(1\le m\le6\).
Let 
	\(H_1=5\), and, for \(2\le m\le6\), define
	\[
	H_m=
	\frac1m
	+
	\sum_{k=1}^{m-2}
	\frac{k!}{m!}
	\frac{(k+1)^{m-k-1}-k^{m-k-1}}{m-k-1}
	+
	\frac1{m(m-1)}
	+
	\frac1{m!(m-1)}.
	\]
	For $m\ge 2$, splitting the integral at \(1/m,\ldots,1\), we obtain
	\begin{align*}
		\int_0^\infty
		\prod_{j=1}^{m}\min\left\{1,\frac1{jx}\right\}\,dx
		&=
		\frac1m
		+
		\sum_{k=1}^{m-2}
		\frac{k!}{m!}
		\frac{(k+1)^{m-k-1}-k^{m-k-1}}{m-k-1}\\
		&\quad+
		\frac1m\log\left(1+\frac1{m-1}\right)
		+
		\frac1{m!(m-1)}\le H_m,
	\end{align*}
	where the last inequality follows from
	\(\log(1+x)\le x\).

	Define
	\[
	C_\alpha=
	\frac{22 \epsilon_\alpha H_m}
	{7d\,\widetilde\Delta_\alpha}
	\,21^{m-10}
	\left(\frac{44h_\alpha}{7}\right)^2.
	\]
	We claim that
	\[
	r^3\int_{V_\alpha}
	\theta^2|\varphi_{r,12}(\theta)|\,d\theta
	\le C_\alpha.
	\]
	Indeed, \(\theta^2\le(2\pi h_\alpha)^2\) on \(V_\alpha\).
	For $m\ge 2, r\ge 21$, by~\eqref{eq:inte-lem}, using \(\pi<22/7\) and
	\(0<\widetilde\Delta_\alpha\le\Delta_\alpha\), we have 
	\begin{align*}
	r^3\int_{V_\alpha}
	\theta^2|\varphi_{r,12}(\theta)|\,d\theta&\le 
		r^3 (2\pi h_\alpha)^2\int_{V_\alpha}
|\varphi_{r,12}(\theta)|\,d\theta\\
	&\le r^3 (2\pi h_\alpha)^2	\frac{\epsilon_\alpha\pi r^{-13+m}}{d\Delta_\alpha}\int_0^\infty \prod_{j=1}^{m}
		\min\left\{1,\frac{1}{jx}\right\}\,dx\\
	&\le \frac{\epsilon_\alpha\pi r^{m-10}}{d\Delta_\alpha}  (2\pi h_\alpha)^2 H_m\le C_{\alpha}.
	\end{align*}
	For $m=1,$
		\begin{align*}
		r^3\int_{V_\alpha}
		\theta^2|\varphi_{r,12}(\theta)|\,d\theta&\le 
		r^3 (2\pi h_\alpha)^2\int_{V_\alpha}
		|\varphi_{r,12}(\theta)|\,d\theta\\
		&\le r^3 (2\pi h_\alpha)^2	\frac{2\pi r^{-12}}{d\Delta_\alpha}(1+\log r)\\
		&= \frac{2\pi}{d\Delta_\alpha}  (2\pi h_\alpha)^2 \left(r^{-9} (1+\log r)\right).
	\end{align*}
Note that 
	\[
	\frac{d}{dr}\left[(1+\log r)r^{-9}\right]
	=
	r^{-10}\bigl[1-9(1+\log r)\bigr]<0.
	\]
	Consequently, for \(r\ge21\),
	$
	(1+\log r)r^{-9}
	\le
	(1+\log21)21^{-9}
	<
	5\cdot21^{-9}
	=
	H_1\,21^{-9}.
	$
	Thus 
\begin{align*}
	r^3\int_{V_\alpha}
	\theta^2|\varphi_{r,12}(\theta)|\,d\theta& \le  \frac{2\pi}{d\Delta_\alpha}  (2\pi h_\alpha)^2 \cdot 21^{-9} H_1\\
	&\le
	\	\frac{44 H_1}
	{7d\,\widetilde\Delta_\alpha}
	\,21^{-9}
	\left(\frac{44h_\alpha}{7}\right)^2=C_\alpha.
\end{align*}
This proves the claim.
	
	All \(C_\alpha\) are positive rational numbers independent of \(r\).
	Since the \(23\) nonzero cells partition \([\pi/12,\pi]\)
	up to their common endpoints, ~\ref{app:fourier-code} gives a Maple implementation that verifies the inequality
	\begin{align*}
		r^3\,12^{9/2}
		\int_{\pi/12}^{\pi}
		\theta^2|\varphi_{r,12}(\theta)|\,d\theta<12^{9/2}\sum_{\alpha\ne 0}C_{\alpha}<4.
	\end{align*}
	This completes the proof.
\end{proof}

	\begin{proposition}\label{lem:uniform-J2}
		For all integers \(r\ge21\) and \(n\ge12\),
		\[
		J^{(3)}_{r,n}<\frac{6}{r^3n^{9/2}}.
		\]
	\end{proposition}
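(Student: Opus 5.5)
The plan is to reduce everything to Lemma~\ref{lem:fixed-twelve} and Lemma~\ref{lem:sine-square}. The idea is to factor off the first twelve factors of $\varphi_{r,n}$ and show that the remaining factors contract fast enough to beat the growth of $n^{9/2}$.

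\emph{Case $n=12$.} Since $\pi/10>\pi/12$, we have $J^{(3)}_{r,12}\le\int_{\pi/12}^{\pi}\theta^2|\varphi_{r,12}(\theta)|\,d\theta$. Lemma~\ref{lem:fixed-twelve} then gives $r^3 12^{9/2}J^{(3)}_{r,12}<4<6$.

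\emph{Large $n$.} Lemmas~\ref{lem:power-envelope} and~\ref{lem:sine-square} give $|\varphi_{r,n}(\theta)|\le r^{-2n/5}$ on $[\pi/10,\pi]$. Hence
\[
r^3n^{9/2}J^{(3)}_{r,n}\le \frac{\pi^3}{3}\,n^{9/2}r^{3-2n/5}\le \frac{\pi^3}{3}\,n^{9/2}21^{3-2n/5}.
\]
This is decreasing in $n$ once $\frac{9}{2n}<\frac25\log 21$, i.e.\ for $n\ge 4$. I expect it to be below $6$ from about $n=N_0\approx 20$ onward. That threshold is found by one explicit rational evaluation, done as in the proof of Proposition~\ref{lem:large-n-tail} with $\pi<22/7$ and crude bounds on $21^{1/5}$.

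\emph{Intermediate range $13\le n<N_0$.} Here I would write $\varphi_{r,n}=\varphi_{r,12}\prod_{k=13}^{n}\psi_r(k\theta)$ and prove a contraction bound for consecutive pairs. For $\theta\in[\pi/10,\pi]$, the angles $k\theta/2$ and $(k+1)\theta/2$ differ by $\theta/2\in[\pi/20,\pi/2]$ modulo $\pi$. Therefore at least one of them lies at distance at least $\pi/40$ from $\pi\mathbb Z$. Using $|\psi_r(x)|\le 1/(r|\sin(x/2)|)$ together with $|\psi_r|\le1$, this gives
\[
|\psi_r(k\theta)\psi_r((k+1)\theta)|\le c:=\frac{1}{21\sin(\pi/40)}<0.61 .
\]
Pairing the factors $13,14$, then $15,16$, and so on, we get
\[
r^3n^{9/2}J^{(3)}_{r,n}\le 4\Bigl(\frac{n}{12}\Bigr)^{9/2}c^{\lfloor (n-12)/2\rfloor}.
\]

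\emph{Main obstacle.} This crude constant does not win on its own: at $n=15$ the right-hand side is about $6.6$. So the intermediate range is where the real work lies. I would try two sharpenings, in this order.
\begin{itemize}
\item Use the pair bound on the original interval $[\pi/10,\pi]$ rather than $[\pi/12,\pi]$. Then split that interval and take the better of the two lower bounds: for small $\theta$ the sum $\sin^2(k\theta/2)+\sin^2((k+1)\theta/2)$ is large, while for larger $\theta$ the separation $\theta/2$ is large. This yields a $\theta$-dependent contraction much smaller than $0.61$ on most of the range.
\item Failing that, rerun the Voronoi-cell argument of Lemmas~\ref{lem:rational-cell-estimate}--\ref{lem:fixed-twelve} for $\varphi_{r,12}$ with the extra factor bound inserted. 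Alternatively, treat each $13\le n<N_0$ by the $r^{-2n/5}$ bound restricted to subintervals where a stronger version of Lemma~\ref{lem:sine-square} holds. Either way, the remaining finitely many cases become rational inequalities checked exactly, in the style of the appendices.
\end{itemize}
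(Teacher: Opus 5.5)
Your cases $n=12$ and $n\ge 19$ are fine. The proposal nevertheless leaves a genuine gap in the intermediate range, and you have in fact located it yourself.

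With $c=1/(21\sin(\pi/40))\approx 0.607$ one has $c\,(7/6)^{9/2}\approx 1.21>1$, so pairing consecutive factors cannot beat the growth of $n^{9/2}$. Concretely, $4(n/12)^{9/2}c^{\lfloor (n-12)/2\rfloor}$ is about $6.6$ at $n=15$ and about $7.1$ at $n=17$. The $r^{-2n/5}$ bound is far above $6$ at those $n$ as well.

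Neither proposed sharpening is carried out. The first rests on a false premise, since
\[
\sin^2x+\sin^2(x+\delta)=1-\cos\delta\cos(2x+\delta)
\]
can be as small as $1-\cos\delta$. So the pair sum of squares is \emph{not} large for small $\theta$; moreover, you were already working on $[\pi/10,\pi]$. The second sharpening is only a sketch of a new finite computation. As written, the statement is not proved for $n=15$ and $n=17$.

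The missing idea is that keeping only the larger of the two sines throws away the factor you need. Put $a=r|\sin(k\theta/2)|$ and $b=r|\sin((k+1)\theta/2)|$.

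\begin{itemize}
\item First, $|\psi_r(k\theta)\psi_r((k+1)\theta)|\le 1/(\max\{1,a\}\max\{1,b\})$.
\item Next, $(\max\{1,a\}-1)(\max\{1,b\}-1)\ge 0$ gives $\max\{1,a\}\max\{1,b\}\ge a+b-1$.
\item The sine subtraction formula gives $a+b\ge r\sin(\theta/2)\ge 21\sin(\pi/20)>63/20$ for $\theta\in[\pi/10,\pi]$.
\end{itemize}

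Hence the pair factor is below $20/43$ rather than $0.607$. Since $\frac{20}{43}(7/6)^{9/2}<1$, the quantity $r^3n^{9/2}J^{(3)}_{r,n}$ is nonincreasing along each parity class of $n\ge 12$. The two starting values are $<4$ at $n=12$ (Lemma~\ref{lem:fixed-twelve}) and $<4\,(13/12)^{9/2}<6$ at $n=13$ (using $|\psi_r(13\theta)|\le 1$). This is exactly the paper's proof, and it makes your separate large-$n$ branch via $r^{-2n/5}$ unnecessary.
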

	
	\begin{proof}
		We first prove that, for every \(k\ge1\),
		\begin{equation}\label{eq:adjacent-contraction}
			|\psi_r(k\theta)\psi_r((k+1)\theta)|
			<\frac{20}{43},
			\qquad
			\frac{\pi}{10}\le\theta\le\pi.
		\end{equation}
		Set
		\[
		a=r|\sin(k\theta/2)|,
		\qquad
		b=r|\sin((k+1)\theta/2)|.
		\]
		Using \(|\psi_r(x)|\le1\), we have
		\[
		|\psi_r(k\theta)\psi_r((k+1)\theta)|
		\le\frac1{\max\{1,a\}\max\{1,b\}}.
		\]
		The sine subtraction formula gives
		\[
		a+b\ge r\sin(\theta/2).
		\]
		Moreover,
		\[
		\max\{1,a\}\max\{1,b\}\ge a+b-1.
		\]
		The strict concavity of sine on \([0,\pi/6]\) yields
		\[
		\sin\frac{\pi}{20}
		>
		\frac{\pi/20}{\pi/6}\sin\frac{\pi}{6}
		=\frac3{20}.
		\]
		Consequently,
		\[
		a+b-1
		\ge r\sin(\theta/2)-1
		\ge21\sin(\pi/20)-1>\frac{43}{20},
		\]
		which proves \eqref{eq:adjacent-contraction}.
		
	 Recall that 
		\[
		r^3n^{9/2}J^{(3)}_{r,n}
		=
		r^3n^{9/2}
		\int_{\pi/10}^{\pi}
		\theta^2|\varphi_{r,n}(\theta)|\,d\theta.
		\]
		Since
		\[
		\varphi_{r,n+2}(\theta)
		=
		\varphi_{r,n}(\theta)
		\psi_r((n+1)\theta)\psi_r((n+2)\theta),
		\]
		equation \eqref{eq:adjacent-contraction} implies
		\begin{equation*}
			r^3(n+2)^{9/2}J^{(3)}_{r,n+2}
			\le
			\frac{20}{43}\left(\frac{n+2}{n}\right)^{9/2} 	r^3n^{9/2}J^{(3)}_{r,n}
			\le
			\frac{20}{43}\left(\frac76\right)^{9/2}	r^3n^{9/2}J^{(3)}_{r,n}<	r^3n^{9/2}J^{(3)}_{r,n}.
		\end{equation*}
		Thus, for each fixed $r\ge 21$, the sequence \(r^3n^{9/2}J^{(3)}_{r,n}\) is nonincreasing along each parity class.
		Also, \(|\psi_r|\le1\) gives
		\[
		r^313^{9/2} J^{(3)}_{r,13}
		\le\left(\frac{13}{12}\right)^{9/2} r^3 12^{9/2}J^{(3)}_{r,12}
		<\frac32 r^3 12^{9/2} J^{(3)}_{r,12}.
		\]
		
	    By Lemma~\ref{lem:fixed-twelve}, for every \(r\ge21\),
		\begin{equation*}
			r^3 12^{9/2} J^{(3)}_{r,12}=r^3\,12^{9/2}
			\int_{\pi/10}^{\pi}
			\theta^2|\varphi_{r,12}(\theta)|\,d\theta<4.
		\end{equation*}
	    Thus, 
		$
		r^3n^{9/2}J^{(3)}_{r,n}<6.
		$
		This completes the proof.
	\end{proof}

\subsection{Proof of Theorem~\ref{thm:main}}

\begin{proof}[Proof of Theorem~\ref{thm:main}]
	Theorem~\ref{thm:finite} settles all cases of Conjecture~\ref{con:main} for $2\le r\le 20$.
	Now suppose that \(r\ge21\).
	Theorem~\ref{low:seeds} gives the unimodality of $F_{r,n}$ for even $r\ge 22$ with all $1\le n\le 11$ and 
	for odd $r\ge 21$ with $n=11$. 
	Thus, in both cases, \(F_{r,11}\) provides the starting point
	for induction.

	Fix an integer \(r\ge21\). Let \(n\ge12\), and assume that $F_{r,n-1}(q)$ is unimodal.
    By Theorem~\ref{checkinginterval},
    it suffices to prove that $B_{r,n}(j)\ge 0$ for every integer $j$ satisfying
    $$
    \lfloor\mu_{r,n-1}\rfloor< j\le \lfloor\mu_{r,n}\rfloor.
    $$
		Let
	\begin{equation*}
		t:=\mu_{r,n}-j+\frac12>0.
	\end{equation*}
	By~\eqref{bound:t},
	\begin{equation*}
		t\le\mu_{r,n}-\mu_{r,n-1}+\frac12=\frac{(r-1)n+1}{2}\le\frac{rn}{2}.
	\end{equation*}
	Recall that by~\eqref{eq:local-tail} and~\eqref{3j1},~\eqref{3j2},~\eqref{3j3}
	$$
	B_{r,n}(j)\ge\frac{2r^n}{\pi}
	\left(
	P_{r,n}(t)-\frac{t}{2}J_{r,n}
	\right)=\frac{2r^n}{\pi}
	\left(
	P_{r,n}(t)-\frac{t}{2}\left(J^{(1)}_{r,n}+J^{(2)}_{r,n}+J^{(3)}_{r,n}\right)
	\right).
	$$
Proposition~\ref{lem:positive-lobe} gives
	\begin{equation*}
		P_{r,n}(t)>\frac{315\,t}{16\,r^3n^{9/2}}.
	\end{equation*}
	Propositions~\ref{lem:uniform-J0},~\ref{lem:large-n-tail} and~\ref{lem:uniform-J2}  give
	$$
	r^3n^{9/2}J_{r,n}^{(1)}<\frac{63}{2},\quad r^3n^{9/2}J_{r,n}^{(2)}<\frac{3}{2},\quad r^3n^{9/2}J_{r,n}^{(3)}<6.
	$$
	Thus $$P_{r,n}(t)-\frac{t}{2}J_{r,n}\ge \frac{315\,t}{16\,r^3n^{9/2}}-\frac{39\,t}{2\,r^3n^{9/2}}=\frac{3\,t}{16\,r^3n^{9/2}}>0.$$
	We obtain  the unimodality for all
	\(n\ge 12\) by induction on \(n\).
	This completes the proof.
	\end{proof}

\clearpage
\appendix

\appendix
\section{The center-vanishing verifier}
\label{app:center-zero}
For $n=5,\ldots,11$, the following Maple code verifies
\[
L\!\left(\frac{n(n+1)}4\right)=0,
\]
where $L$ is the polynomial on the final subinterval in the proof
of Proposition~\ref{sc-B}. All computations use exact rational arithmetic.
The Maple programs in Appendices A--C
were executed with Maple 18.00 (64-bit Windows). All seven center values
were exactly zero.

Save the following listing as \texttt{appendix\_A\_center.mpl} in the
same directory as the programs in Appendices B and C. Reading this file defines
\texttt{CheckCenterZero} and runs the seven checks.

\begin{lstlisting}
	CheckCenterZero := proc(n::posint)
	local x, i, b, U, value;
	b := n*(n+1)/4;
	U := expand(mul(1-x^i, i=1..n));
	value := add(coeff(U,x,i)*(b-i)^(n-2),
	i=0..ceil(b)-1)/(n!*(n-2)!);
	if value <> 0 then
	error "center identity failed", n;
	end if;
	return true;
	end proc:
	
	for n from 5 to 11 do
	CheckCenterZero(n);
	printf("n=%d: L(b)=0\n", n);
	end do:
\end{lstlisting}

\section{The initial-case verifier}
\label{app:low-code}
This program implements the two rational certificate inequalities in
Proposition~\ref{sc-B} exactly as stated in the proof. It covers even $r\ge22$ for
$5\le n\le11$ and odd $r\ge21$ for $n=11$, checking both parities of $j$.
The cases $n=3,4$ are established separately by Propositions~\ref{sc:three} and~\ref{sc:small}
and are not included in this verifier.

For each residue class modulo $M_n=\operatorname{lcm}(2,\ldots,n)$,
\texttt{CoefficientData} reconstructs $R_s$ from the $n-1$ values specified
in Lemma~\ref{lemma-key}, checks one additional value, and checks the common leading
coefficient. It computes $\alpha_k$, $\beta_k$, and $\rho_k$ according to
\eqref{Ae}--\eqref{low:residue-remainder}. Every operation relevant to the certificates uses exact
integer or rational arithmetic.

For $I=[a,b]$, let $E_{I,P}$ be the constant coefficient minus the sum of the
absolute values of the other coefficients after substituting
$x=(a+b)/2+(b-a)z/2$ into $P(x)$. The program forms the complete polynomial
$T_k^{j',r'}$ before computing
\[
d_k=\max\{0,-E_{I,T_k^{j',r'}}\},\qquad
\eta_k=\rho_k\sum_{i\in\mathcal J_I}|u_{n,i}|(b-i)^k.
\]
It checks the following values, using the notation of Proposition~\ref{sc-B}:
\[
C_{I,r',j'}=
\begin{cases}
	\displaystyle\lambda-\sum_{k=0}^{n-3}(d_k+\eta_k)21^{k+2-n},
	& b\ne n(n+1)/4,\\[6pt]
	\displaystyle\widehat\lambda/2-\sum_{k=0}^{n-3}(d_k+\eta_k)21^{k+3-n},
	& b=n(n+1)/4.
\end{cases}
\]
Only the leading polynomial $L$ is divided by its center factor on the final
interval. No additional center division is performed on the lower-degree
polynomials $T_k^{j',r'}$.

All 112 values, on 56 intervals, exceed $1/100000$. Their global minimum is
\[
\frac{6395814027918704047961}{589063840459248411397324800},
\]
attained for $n=11$, $r'=j'=1$, and $I=[9,10]$. The following table gives
downward-rounded decimal lower bounds for the group minima; the program
itself compares exact rational numbers.
\begin{center}
	\begin{tabular}{rrrrl}
		\toprule
		$n$&$r'$&Intervals&Checks&Lower bound for minimum\\
		\midrule
		5&0&3&6&0.008749055177\\
		6&0&4&8&0.005435419391\\
		7&0&4&8&0.003801875429\\
		8&0&4&8&0.002308045758\\
		9&0&5&10&0.001460047035\\
		10&0&6&12&0.000947429474\\
		11&0&6&12&0.000636165869\\
		11&1&24&48&0.000010857590\\
		\bottomrule
	\end{tabular}
\end{center}
Save the following listing as \texttt{appendix\_B\_initial.mpl} in the same
directory as the file from \ref{app:center-zero}. In a fresh
Maple session, set that directory as Maple's working directory and run
\begin{lstlisting}[language=Maple]
	read "appendix_B_initial.mpl":
\end{lstlisting}
The file \texttt{initial-certificate.txt} records $n$, the two parities,
the interval endpoints, whether the interval is final, $\lambda$ or
$\widehat\lambda$, the two subtracted sums, and the resulting margin.
The program stops on any failed assertion.

The Maple program below was executed using exact integer and rational
arithmetic. All 112 certificate inequalities on the 56 intervals were
verified, and every certificate margin is strictly greater than $1/100000$.
The results above and the complete certificate in
\texttt{initial-certificate.txt} were reproduced by this Maple program.
\begin{lstlisting}[language=Maple]
	# appendix B. Exact certificates for Proposition 3.7.
	# Place appendix_A_center.mpl in the working directory, then run:
	# read "appendix_B_initial.mpl":
	# All arithmetic is exact. The 112 certificates follow the direct d_k
	# bounds in Proposition 3.7, including on the final subinterval.
	read "appendix_A_center.mpl":
	
	CoefficientData := proc(n::posint)
	option remember;
	local Mn, Nn, limit, c, rcoef, alpha, beta, rho,
	s, t, k, m, x, Rs, values, points;
	if n < 3 then error "n must be at least 3" end if;
	Mn := ilcm(seq(k,k=2..n));
	Nn := n*(n+1)/2;
	limit := n*Mn-1;
	c := Array(0..limit,fill=0);
	c[0] := 1;
	for k from 2 to n do
	for m from k to limit do
	c[m] := c[m]+c[m-k];
	end do;
	end do;
	rcoef := Array(0..Mn-1,0..n-2,fill=0);
	alpha := Array(0..n-2,fill=0);
	beta := Array(0..n-2,fill=0);
	rho := Array(0..n-2,fill=0);
	points := [seq(t,t=0..n-2)];
	for s from 0 to Mn-1 do
	values := [seq(c[s+Mn*t],t=0..n-2)];
	Rs := expand(subs(t=(x-s-(Nn-1)/2)/Mn,
	interp(points,values,t)));
	for k from 0 to n-2 do
	rcoef[s,k] := coeff(Rs,x,k);
	alpha[k] := alpha[k]+rcoef[s,k]/Mn;
	beta[k] := beta[k]+(-1)^s*rcoef[s,k]/Mn;
	end do;
	m := s+(n-1)*Mn;
	if subs(x=m+(Nn-1)/2,Rs) <> c[m] then
	error "interpolation check failed",n,s;
	end if;
	if coeff(Rs,x,n-2) <> 1/(n!*(n-2)!) then
	error "leading coefficient check failed",n,s;
	end if;
	end do;
	for s from 0 to Mn-1 do
	for k from 0 to n-2 do
	rho[k] := max(rho[k],
	abs(rcoef[s,k]-alpha[k]-(-1)^s*beta[k]));
	end do;
	end do;
	if beta[n-2] <> 0 or rho[n-2] <> 0 then
	error "degree check failed",n;
	end if;
	return [Mn,alpha,beta,rho,rcoef];
	end proc:
	
	# E_[a,b],P = c_0 - sum_(h>=1) |c_h| after x=(a+b)/2+(b-a)z/2.
	IntervalLowerBound := proc(P,x::name,a::rational,b::rational)
	local z,T,h;
	if a >= b then error "invalid interval",a,b end if;
	if P = 0 then return 0 end if;
	T := expand(subs(x=(a+b)/2+(b-a)*z/2,P));
	return coeff(T,z,0)-add(abs(coeff(T,z,h)),h=1..degree(T,z));
	end proc:
	
	VerifyRow := proc(n::posint,rparity::nonnegint,out)
	local data,alpha,beta,rho,Nn,center,left,knots,nIntervals,
	expectedCounts,a,b,i,j,k,e,x,U,u,indices,Lpoly,T,eta,
	lambda,dk,lowerLoss,remainderLoss,margin,minimum,
	central,power,threshold;
	if not ((rparity=0 and 5<=n and n<=11)
	or (rparity=1 and n=11)) then
	error "unsupported certificate row",n,rparity;
	end if;
	data := CoefficientData(n);
	alpha,beta,rho := data[2],data[3],data[4];
	Nn := n*(n+1)/2;
	center := Nn/2;
	if rparity=0 then left := n*(n-1)/4 else left := 9 end if;
	knots := sort(convert({left,center,
		seq(i,i=ceil(left)..floor(center))},list));
	nIntervals := nops(knots)-1;
	expectedCounts := [3,4,4,4,5,6,6];
	if (rparity=0 and nIntervals<>expectedCounts[n-4])
	or (rparity=1 and nIntervals<>24) then
	error "interval count failed",n,rparity;
	end if;
	U := expand(mul(1-x^i,i=1..n));
	u := Array(0..Nn);
	for i from 0 to Nn do u[i] := coeff(U,x,i) end do;
	threshold := 1/100000;
	minimum := infinity;
	for j from 1 to nIntervals do
	a,b := knots[j],knots[j+1];
	indices := select(i -> i<(a+b)/2,[seq(i,i=0..Nn)]);
	Lpoly := expand(add(u[i]*(x-i)^(n-2),i in indices)
	/(n!*(n-2)!));
	central := evalb(b=center);
	if central then
	CheckCenterZero(n);
	if subs(x=center,Lpoly) <> 0 then
	error "interval leading polynomial does not vanish",n,a,b;
	end if;
	Lpoly := normal(Lpoly/(center-x));
	if not type(Lpoly,polynom(rational,x)) then
	error "nonpolynomial center quotient",n,a,b;
	end if;
	end if;
	lambda := IntervalLowerBound(Lpoly,x,a,b);
	for e from 0 to 1 do
	lowerLoss,remainderLoss := 0,0;
	for k from 0 to n-3 do
	T := expand(add(u[i]*(alpha[k]
	+(-1)^(e-rparity*i)*beta[k])*(x-i)^k,
	i in indices));
	dk := max(0,-IntervalLowerBound(T,x,a,b));
	eta := rho[k]*add(abs(u[i])*(b-i)^k,i in indices);
	if central then power := 21^(k+3-n)
	else power := 21^(k+2-n) end if;
	lowerLoss := lowerLoss+dk*power;
	remainderLoss := remainderLoss+eta*power;
	end do;
	if central then
	margin := lambda/2-lowerLoss-remainderLoss;
	else
	margin := lambda-lowerLoss-remainderLoss;
	end if;
	if margin <= threshold then
	error "certificate failed",n,rparity,e,a,b,margin;
	end if;
	minimum := min(minimum,margin);
	fprintf(out,"%d %d %d %a %a %a %a %a %a %a\n",
	n,rparity,e,a,b,central,lambda,
	lowerLoss,remainderLoss,margin);
	end do;
	end do;
	printf("n=%d rparity=%d intervals=%d checks=%d min=%a > %a\n",
	n,rparity,nIntervals,2*nIntervals,minimum,threshold);
	return [nIntervals,2*nIntervals,minimum];
	end proc:
	
	VerifyInitial := proc()
	local out,n,counts,intervals,inequalities,minimum;
	out := fopen("initial-certificate.txt",WRITE,TEXT);
	intervals,inequalities,minimum := 0,0,infinity;
	try
	fprintf(out,"n rparity jparity a b central lambda lowerLoss remainderLoss margin\n");
	for n from 5 to 11 do
	counts := VerifyRow(n,0,out);
	intervals := intervals+counts[1];
	inequalities := inequalities+counts[2];
	minimum := min(minimum,counts[3]);
	end do;
	counts := VerifyRow(11,1,out);
	intervals := intervals+counts[1];
	inequalities := inequalities+counts[2];
	minimum := min(minimum,counts[3]);
	if intervals<>56 or inequalities<>112 then
	error "total certificate count failed",intervals,inequalities;
	end if;
	fprintf(out,"PASS intervals=%d inequalities=%d minimum=%a\n",
	intervals,inequalities,minimum);
	printf("All %d inequalities on %d intervals exceed 1/100000.\n",
	inequalities,intervals);
	printf("Global minimum = %a.\n",minimum);
	finally
	fclose(out);
	end try;
	return true;
	end proc:
	
	VerifyInitial():
\end{lstlisting}

\section{The Fourier cell verifier}
\label{app:fourier-code}
This program verifies precisely the finite rational inequality required by
Lemma~\ref{lem:fixed-twelve}. Throughout this appendix, $n=12$. The set $\mathcal R$ and its
24 rational sites are those defined in Section~4.3.3. The 23 nonzero cells
cover $[\pi/12,\pi]$ up to shared endpoints. The zero cell is not included.

For $\alpha=a/d\in\mathcal R\setminus\{0\}$ in lowest terms, put
$m=\lfloor12/d\rfloor$ and let $\epsilon_\alpha=2$, except that
$\epsilon_{1/2}=1$. The program uses the polynomial sine lower bound
\[
s(x)=y-\frac{y^3}{6}+\frac{y^5}{120}-\frac{y^7}{5040},
\quad y=\frac{31}{10}\min\bigl(\{x\},1-\{x\}\bigr),
\]
and defines
\[
\widetilde\Delta_\alpha=
\min\left\{\prod_{\substack{1\le k\le12\\d\nmid k}}s(k\ell_\alpha),
\prod_{\substack{1\le k\le12\\d\nmid k}}s(kh_\alpha)\right\}.
\]
The values returned by \texttt{HUpper} are
\[
5,\quad\frac32,\quad\frac34,\quad\frac{71}{144},\quad
\frac{523}{1440},\quad\frac{12337}{43200}
\qquad(m=1,\ldots,6).
\]
It computes exactly the constants from Lemma~\ref{lem:fixed-twelve},
\[
C_\alpha=\frac{22\epsilon_\alpha H_m}{7d\widetilde\Delta_\alpha}
21^{m-10}\left(\frac{44h_\alpha}{7}\right)^2.
\]
All denominator products remain exact rational numbers. For each nonzero
site $\alpha$, define the integer
\[
N_\alpha=\left\lceil 10^8\,12^4\frac72\,C_\alpha\right\rceil.
\]
Thus $12^4(7/2)C_\alpha\le N_\alpha/10^8$.
Table~\ref{tab:fourier-cell-certificates} lists all 23 integers, together
with the rational endpoints of their cells. The endpoints are in the
normalized coordinate $\theta/(2\pi)$; thus
$V_\alpha=[2\pi\ell_\alpha,2\pi h_\alpha]$.
The constants $C_\alpha$ use $\epsilon_{1/2}=1$ and
$\epsilon_\alpha=2$ for all other sites.

\clearpage
\begin{table}[H]
	\centering
	\small
	\renewcommand{\arraystretch}{1.28}
	\setlength{\tabcolsep}{11pt}
	\caption{Exact rational certificates for the 23 nonzero cells.
		Here $\alpha=a/d$ is in lowest terms, $m=\lfloor12/d\rfloor$,
		and $N_\alpha=\lceil10^8\,12^4(7/2)C_\alpha\rceil$.
		The last column contains integers; the corresponding upper bounds
		are $N_\alpha/10^8$.}
	\label{tab:fourier-cell-certificates}
	\begin{tabular}{cccccr}
		\hline
		$\alpha$ & $\ell_\alpha$ & $h_\alpha$ & $d$ & $m$ & $N_\alpha$ \\
		\hline
		$1/12$ & $1/24$ & $23/264$ & 12 & 1 & 4712 \\
		$1/11$ & $23/264$ & $21/220$ & 11 & 1 & 5615 \\
		$1/10$ & $21/220$ & $19/180$ & 10 & 1 & 6871 \\
		$1/9$ & $19/180$ & $17/144$ & 9 & 1 & 6117 \\
		$1/8$ & $17/144$ & $15/112$ & 8 & 1 & 3978 \\
		$1/7$ & $15/112$ & $13/84$ & 7 & 1 & 3719 \\
		$1/6$ & $13/84$ & $23/132$ & 6 & 2 & 24228 \\
		$2/11$ & $23/132$ & $21/110$ & 11 & 1 & 16471 \\
		$1/5$ & $21/110$ & $19/90$ & 5 & 2 & 27688 \\
		$2/9$ & $19/90$ & $17/72$ & 9 & 1 & 12279 \\
		$1/4$ & $17/72$ & $23/88$ & 4 & 3 & 229592 \\
		$3/11$ & $23/88$ & $43/154$ & 11 & 1 & 33970 \\
		$2/7$ & $43/154$ & $41/140$ & 7 & 1 & 17736 \\
		$3/10$ & $41/140$ & $19/60$ & 10 & 1 & 28230 \\
		$1/3$ & $19/60$ & $23/66$ & 3 & 4 & 2970905 \\
		$4/11$ & $23/66$ & $65/176$ & 11 & 1 & 57501 \\
		$3/8$ & $65/176$ & $31/80$ & 8 & 1 & 28139 \\
		$2/5$ & $31/80$ & $49/120$ & 5 & 2 & 100437 \\
		$5/12$ & $49/120$ & $71/168$ & 12 & 1 & 65965 \\
		$3/7$ & $71/168$ & $55/126$ & 7 & 1 & 50664 \\
		$4/9$ & $55/126$ & $89/198$ & 9 & 1 & 43278 \\
		$5/11$ & $89/198$ & $21/44$ & 11 & 1 & 90876 \\
		$1/2$ & $21/44$ & $1/2$ & 2 & 6 & 379357622 \\
		
		\hline
		\multicolumn{5}{r}{Total} & $383186593$ \\
		\hline
	\end{tabular}
\end{table}

The endpoint cell $\alpha=1/2$ contributes $379357622$ to the sum of
the integers $N_\alpha$; the other 22 cells contribute $3828971$.
Since every $C_\alpha$ is positive and $\sqrt{12}<7/2$, the estimates
in Lemma~\ref{lem:fixed-twelve} give, for every integer $r\ge21$,
\[
\begin{aligned}
	r^3 12^{9/2}\int_{\pi/12}^{\pi}
	\theta^2|\varphi_{r,12}(\theta)|\,d\theta
	&\le 12^{9/2}\sum_{\alpha\in\mathcal R\setminus\{0\}}C_\alpha\\
	&< 12^4\frac72\sum_{\alpha\in\mathcal R\setminus\{0\}}C_\alpha\\
	&\le \frac{1}{10^8}\sum_{\alpha\in\mathcal R\setminus\{0\}}N_\alpha
	=\frac{383186593}{10^8}<4.
\end{aligned}
\]

Save the listing as \texttt{appendix\_C\_fourier.mpl}. In a fresh Maple
session, set its directory as Maple's working directory and run
\begin{lstlisting}[language=Maple]
	read "appendix_C_fourier.mpl":
\end{lstlisting}
It writes \texttt{lemma-4-9-certificate.txt}, listing each site, both rational
endpoints, $d$, $m$, $\epsilon_\alpha$, and the upward-rounded cell bound
in units of $10^{-8}$. It checks the number of sites, the continuity of the
partition, positivity of all denominator products, and the final bound $<4$.

The Maple program below was executed using exact integer and rational
arithmetic. It computes all 23 integers $N_\alpha$ listed in
Table~\ref{tab:fourier-cell-certificates}, whose sum is $383186593$.
The complete output is recorded in \texttt{lemma-4-9-certificate.txt}.
\begin{lstlisting}[language=Maple]
	# Appendix C. Exact rational certificate for Lemma 4.9.
	# Run in Maple: read "appendix_C_fourier.mpl":
	# The certificate concerns n=12 and the 23 NONZERO cells only.
	# All quantities below are exact integers or rational numbers.
	
	SineLower := proc(x::rational)
	local z, y, s;
	z := x-floor(x);
	z := min(z,1-z);
	if z <= 0 or z > 1/2 then
	error "invalid argument for sine lower bound", x;
	end if;
	y := 31*z/10;
	s := y-y^3/6+y^5/120-y^7/5040;
	if s <= 0 then error "nonpositive sine lower bound", x end if;
	return s;
	end proc:
	
	HUpper := proc(m::posint)
	local h, k;
	if m > 6 then error "Lemma 4.9 requires m <= 6" end if;
	if m = 1 then return 5 end if;
	h := 1/m;
	for k from 1 to m-2 do
	h := h+k!/m!*((k+1)^(m-k-1)-k^(m-k-1))/(m-k-1);
	end do;
	return h+1/(m*(m-1))+1/(m!*(m-1));
	end proc:
	
	VerifyLemma49 := proc()
	local n, roots, a, d, i, k, m, left, right, previousRight,
	dl, dr, delta, epsilon, C, sumC, scaleUpper, grid,
	units, certificateNumerator, out;
	n := 12;
	roots := sort(convert({seq(seq(a/d,a=0..iquo(d,2)),d=1..n)},list));
	if nops(roots) <> 24 or roots[1] <> 0 or roots[nops(roots)] <> 1/2 then
	error "incorrect rational sites";
	end if;
	# sqrt(12) < 7/2 follows from the exact comparison 12 < (7/2)^2.
	if n >= (7/2)^2 then error "invalid square-root bound" end if;
	scaleUpper := n^4*7/2;
	grid := 10^8;
	sumC := 0;
	certificateNumerator := 0;
	previousRight := 1/24;
	out := fopen("lemma-4-9-certificate.txt",WRITE,TEXT);
	fprintf(out,"alpha left right d m epsilon scaled_bound_units_1e8\n");
	for i from 2 to nops(roots) do
	a := roots[i];
	d := denom(a);
	m := iquo(n,d);
	left := (roots[i-1]+a)/2;
	if i < nops(roots) then right := (a+roots[i+1])/2
	else right := a end if;
	if left <> previousRight then error "gap in cell partition" end if;
	previousRight := right;
	dl := 1;
	dr := 1;
	for k from 1 to n do
	if irem(k,d) <> 0 then
	dl := dl*SineLower(k*left);
	dr := dr*SineLower(k*right);
	end if;
	end do;
	delta := min(dl,dr);
	if delta <= 0 then error "nonpositive denominator", a end if;
	if a = 1/2 then epsilon := 1 else epsilon := 2 end if;
	# This is exactly C_alpha as defined in the proof of Lemma 4.9.
	C := 22*epsilon*HUpper(m)/(7*d*delta)
	*21^(m-10)*(44*right/7)^2;
	if C <= 0 then error "nonpositive cell bound", a end if;
	sumC := sumC+C;
	units := ceil(grid*scaleUpper*C);
	if scaleUpper*C > units/grid then error "rounding check failed" end if;
	certificateNumerator := certificateNumerator+units;
	fprintf(out,"%a %a %a %d %d %d %d\n",
	a,left,right,d,m,epsilon,units);
	end do;
	if previousRight <> 1/2 then error "incorrect last endpoint" end if;
	if scaleUpper*sumC > certificateNumerator/grid then
	error "invalid rational certificate";
	end if;
	if certificateNumerator >= 4*grid then
	error "Lemma 4.9 certificate failed";
	end if;
	fprintf(out,"number_of_nonzero_cells=%d\n",nops(roots)-1);
	fprintf(out,"H_m=%a\n",[seq(HUpper(m),m=1..6)]);
	fprintf(out,"r^3*12^(9/2)*integral < %d/%d < 4\n",
	certificateNumerator,grid);
	fprintf(out,"All inequalities checked using exact rational arithmetic.\n");
	fclose(out);
	printf("Lemma 4.9 verified: scaled integral < %d/%d < 4.\n",
	certificateNumerator,grid);
	return certificateNumerator/grid;
	end proc:
	
	VerifyLemma49():
\end{lstlisting}
\end{document}